\newcommand{\innp}[1]{\left\langle #1 \right\rangle}
\newcommand{\mA}{\mathbf{A}}
\newcommand{\mB}{\mathbf{B}}
\newcommand{\mC}{\mathbf{C}}
\newcommand{\mU}{\mathbf{U}}
\newcommand{\bE}{\mathbf{E}}
\newcommand{\mX}{\mathbf{X}}
\newcommand{\zeros}{\textbf{0}}
\newcommand{\vx}{\mathbf{x}}
\newcommand{\vxb}{\Bar{\mathbf{x}}}
\newcommand{\vh}{\mathbf{h}}
\newcommand{\cx}{\mathcal{X}}
\newcommand{\cL}{\mathcal{L}}
\newcommand{\cS}{\mathcal{S}}
\newcommand{\cls}{\mathcal{L}_{\mathcal{S}}}
\newcommand{\vxh}{\mathbf{\hat{x}}}
\newcommand{\vy}{\mathbf{y}}
\newcommand{\vz}{\mathbf{z}}
\newcommand{\vv}{\mathbf{v}}
\newcommand{\ve}{\mathbf{e}}
\newcommand{\vb}{\mathbf{b}}
\newcommand{\valpha}{\bm{\alpha}}
\newcommand{\vDelta}{\bm{\Delta}}
\newcommand{\vg}{\mathbf{g}}
\newcommand{\vu}{\mathbf{u}}
\newcommand{\vmu}{\bm{\mu}}
\newcommand{\vnu}{\bm{\nu}}
\newcommand{\rr}{\mathbb{R}}
\newcommand{\ee}{\mathbb{E}}
\newcommand{\Compl}{\mbox{Compl}}
\newcommand{\Schatten}{\mathscr{S}}
\newtheorem{fact}[theorem]{Fact}
\newtheorem{assumptions}[theorem]{Assumptions}
\newtheorem{observation}[theorem]{Observation}
\DeclareMathOperator*{\argmin}{argmin}
\DeclareMathOperator*{\argsup}{argsup}
\newcommand{\subalign}[1]{%
  \vcenter{%
    \Let@ \restore@math@cr \default@tag
    \baselineskip\fontdimen10 \scriptfont\tw@
    \advance\baselineskip\fontdimen12 \scriptfont\tw@
    \lineskip\thr@@\fontdimen8 \scriptfont\thr@@
    \lineskiplimit\lineskip
    \ialign{\hfil$\m@th\scriptstyle##$&$\m@th\scriptstyle{}##$\hfil\crcr
      #1\crcr
    }%
  }%
}
\newif\ifmarkup
\newcommand{\markupadd}[1]{%
\ifmarkup
\textcolor{blue}{#1}%
\else
#1%
\fi
}
\newcommand{\markupdelete}[1]{%
\ifmarkup
\textcolor{red}{#1}%
\else
\fi
}
\title{Complementary Composite Minimization, Small Gradients in General Norms, and Applications \markupdelete{to Regression Problems}\thanks{JD was supported by the NSF grant CCF-2007757 and by the Office of the Vice
Chancellor for Research and Graduate Education at the University of Wisconsin–Madison with funding from the
Wisconsin Alumni Research Foundation. CG was partially supported by INRIA through the INRIA Associate Teams project, CORFO through the Clover 2030 Engineering Strategy - 14ENI-26862, and FONDECYT Project 1210362.}}
\titlerunning{Complementary Composite Minimization}        % if too long for running head
\author{Jelena Diakonikolas         \and
        Crist\'{o}bal Guzm\'{a}n %etc.
}
\institute{J. Diakonikolas \at
              University of Wisconsin-Madison \\
              \email{jelena@cs.wisc.edu}           %  \\
%             \emph{Present address:} of F. Author  %  if needed
           \and
           C. Guzm\'{a}n \at
              Institute for Mathematical and Computational Engineering\\
              Faculty of Mathematics and School of Engineering\\
              Pontificia Universidad Cat\'olica de Chile\\
              \email{crguzmanp@mat.uc.cl}
}
\date{Received: date / Accepted: date}
\begin{document}

\maketitle

\begin{abstract}
Composite minimization is a powerful framework in large-scale convex optimization, based on decoupling of the objective function into terms with structurally different properties and allowing for more flexible algorithmic design. We introduce a new algorithmic framework for {\em complementary composite minimization}, where the objective function decouples into a (weakly) smooth and a uniformly convex term. This particular form of decoupling is pervasive in statistics and machine learning, due to its link to regularization. 

The main contributions of our work are summarized as follows. First, we introduce the problem of complementary composite minimization in general normed spaces; second, we provide a unified accelerated algorithmic framework to address broad classes of complementary composite minimization problems; 
and third, we prove that the algorithms resulting from our framework are near-optimal in most of the standard optimization settings. 
Additionally, we show that our algorithmic framework can be used to address the problem of making the gradients small in general normed spaces. As a concrete example, we obtain a nearly-optimal method for the standard $\ell_1$ setup (small gradients in the $\ell_\infty$ norm), essentially matching the bound of~\cite{nesterov2012make} that was previously known only for the Euclidean setup. 
Finally, we show that our composite methods are broadly applicable to a number of regression \markupadd{and other classes of optimization problems, where regularization plays a key role. Our methods
lead to complexity bounds that are either new or match the best existing ones.}
\keywords{Composite minimization \and gradient norm minimization \and linear convergence \and regression}
\subclass{90C06 \and 90C25 \and 65K05}
\end{abstract}

\section{Introduction}
\begin{center}\emph{No function can be both smooth and strongly convex with respect to an $\ell_p$ norm and have a dimension-independent condition number, unless $p=2$.%~\cite{Ball:1994,Borwein:2009}.
}
\end{center}
This is a basic fact from convex analysis\footnote{More generally, it is known that the existence of a continuous uniformly convex function with growth bounded by the squared norm implies that the space has an equivalent $2$-uniformly convex norm \cite{Borwein:2009}; furthermore, using duality \cite{Zalinescu:1983}, we conclude that the existence of a smooth and strongly convex function implies that the space has equivalent $2$-uniformly convex and $2$-uniformly smooth norms, a rare property for a normed space (the most notable examples of spaces that are simultaneously $2$-uniformly convex and $2$-uniformly smooth are Hilbert spaces; see e.g.,~\cite{Ball:1994} for related definitions and more details).} 
and the primary reason why in the existing literature smooth and strongly convex optimization is typically considered only for Euclidean (or, slightly more generally, Hilbert) spaces. In fact, it is not only that moving away from $p = 2$ the condition number becomes dimension-dependent, but that the dependence on the dimension is polynomial for all examples of functions we know of, unless $p$ is trivially close to two. Thus, it is tempting to assert that dimension-independent linear convergence (i.e., with logarithmic dependence on the inverse accuracy $1/\epsilon$) is reserved for Euclidean (or nearly-Euclidean) spaces, which has long been common wisdom within the optimization community. 

%Contrary to this wisdom, we show that 
We show that this separation between Euclidean and non-Euclidean setups is not so clear-cut, and 
it is in fact possible to attain linear convergence even in $\ell_p$ (or, more generally, in normed vector) spaces, as long as the objective function can be decomposed into two functions with complementary properties. In particular, we show that if the objective function can be written in the following \emph{complementary composite} form 
\begin{equation}\label{eq:comp-composite-obj}
    \bar{f}(\vx) = f(\vx) + \psi(\vx),
\end{equation}
where $f$ is convex and $L$-smooth w.r.t.~a (not necessarily Euclidean) norm $\|\cdot\|$ and $\psi$ is $m$-strongly convex w.r.t.~the same norm and ``simple,'' meaning that the optimization problems of the form\footnote{This oracle should be contrasted with the proximal oracle, which would require solving problems of the form $\min_{\vx} \{\frac{1}{2}\|\vx - \vz\|_2^2 + \psi(\vx)\}$ and is primarily used with Euclidean norms. Eq.~\eqref{eq:psi-simple} reduces to a linear optimization/Frank-Wolfe oracle when $\psi$ is the indicator of a convex polytope. For our analysis, it also suffices to have an oracle of the form $\min_{\vx} \{\frac{1}{2}\|\vx - \vz\|^2 + \psi(\vx) + \phi(\vx)\},$ where $\phi$ is strongly convex w.r.t.~$\|\cdot\|$.}  
\begin{equation}\label{eq:psi-simple}
    \min_{\vx} \innp{\vz, \vx} + \psi(\vx)
\end{equation}
can be solved efficiently for any linear functional $\vz,$ then $\bar{f}(\vx)$ can be minimized to accuracy $\epsilon > 0$ in $O\Big(\sqrt{\frac{L}{m}}\log(\frac{L\phi(\vxb^{\star})}{\epsilon})\Big)$ iterations, where $\vxb^{\star} = \argmin_{\vx}\bar{f}(\vx)$. As in other standard first-order iterative methods, each iteration requires one call to the gradient oracle of $f$ and one call to a solver for the problem from Eq.~\eqref{eq:psi-simple}. To the best of our knowledge, such a result was previously known only for Euclidean spaces~\cite{nesterov2013gradient}.

This is the basic variant of our result. We also consider more general setups in which $f$ is only weakly smooth (with H\"{o}lder-continuous gradients) and $\psi$ is uniformly convex (see Section~\ref{sec:prelims} for specific definitions and useful properties). We refer to the resulting objective functions $\bar{f}$ as \emph{complementary composite objective functions} (as functions $f$ and $\psi$ that constitute $\bar{f}$ have complementary properties) and to the resulting optimization problems as \emph{complementary composite optimization problems.} The algorithmic framework\markupadd{, based on the Approximate Duality Gap Technique (ADGT)~\cite{diakonikolas2019approximate},} that we consider for complementary composite optimization in Section~\ref{sec:comp-min} is near-optimal (optimal up to logarithmic or poly-logarithmic factors) in terms of iteration complexity in most of the standard optimization settings, which we certify by providing near-matching oracle complexity lower bounds in Section~\ref{sec:LowerBounds}. \markupadd{On a conceptual level, the extension of ADGT to complementary composite settings that handles both uniform convexity and weak smoothness without much additional technical work is another contribution of our work.\footnote{\markupadd{We note that ADGT  could already handle basic composite cases (without uniform convexity of $\psi$)~\cite{diakonikolas2019approximate} and weakly smooth cases~\cite{diakonikolas2018accelerated}; however, previous analyses did not allow exploiting uniform convexity of $\psi$.}}}
We now summarize some further implications of our results.

\paragraph{Small gradients in $\ell_p$ and $\Schatten_p$ norms.} %\todo{Add a note about the $p = 1$ case.}
The original motivation for complementary composite optimization in our work comes from %the problem of 
making the gradients of smooth functions small in non-Euclidean norms. This is a fundamental optimization question, whose study was initiated in~\cite{nesterov2012make} and that is %. Despite being fundamental, this problem is 
still far from being well-understood. Prior to this work, (near)-optimal algorithms were known only for the Euclidean ($\ell_2$) and $\ell_{\infty}$ setups.\footnote{In the $\ell_{\infty}$ setup, a non-Euclidean variant of gradient descent is optimal in terms of iteration complexity.} 

For the Euclidean setup, there are two main results: due to~\cite{kim2020optimizing} and due to~\cite{nesterov2012make}. The algorithm of~\cite{kim2020optimizing} is iteration-complexity-optimal; however, the methodology by which this algorithm was obtained is crucially Euclidean, as it relies on numerical solutions to semidefinite programs, whose formulation is made possible by assuming that the norm of the space is inner-product-induced. An alternative approach, due to~\cite{nesterov2012make}, is to apply the fast gradient method to  a regularized function $\Bar{f}(\vx) = f(\vx) + \frac{\lambda}{2}\|\vx - \vx_0\|_2^2$ for a sufficiently small $\lambda >0,$ where $f$ is the smooth function whose gradient we hope to minimize. Under the appropriate choice of $\lambda >0,$ the resulting algorithm is near-optimal (optimal up to a logarithmic factor).

As discussed earlier, applying the fast gradient method directly to a regularized function as in~\cite{nesterov2012make} is out of question for $p \neq 2,$ as the resulting regularized objective function cannot simultaneously be smooth and strongly convex w.r.t.~$\|\cdot\|_p$ without its condition number growing with the problem dimension. This is where the framework of complementary composite optimization proposed in our work comes into play. Our result also generalizes to normed matrix spaces endowed with $\Schatten_p$ (Schatten-$p$) norms.\footnote{$\Schatten_p$ norm of a matrix $\mA$ is defined as the $\ell_p$ norm of $\mA$'s singular values.} As a concrete example, our approach leads to near-optimal complexity results in the $\ell_1$ and $\Schatten_1$ (a.k.a.~nuclear norm) setups, where the gradient is minimized in the $\ell_\infty$, respectively, $\Schatten_{\infty}$ (a.k.a.~spectral), norm.

It is important to note here why strongly convex regularizers are not sufficient in general and what motivated us to consider  the more general uniformly convex functions $\psi$. While for $p \in (1, 2]$ choosing $\psi(\vx) = \frac{1}{2}\|\cdot\|_p^2$ (which is $(p-1)$-strongly convex w.r.t.~$\|\cdot\|_p;$ see~\cite{nemirovski:1983,juditsky2008large}) 
%,d2018optimal}) 
is sufficient, when $p > 2$  the strong convexity parameter of  $\frac{1}{2}\|\cdot\|_p^2$ w.r.t.~$\|\cdot\|_p$ is  bounded above  by   $1/d^{1 -\frac{2}{p}}$. This is not only true for $\frac{1}{2}\|\cdot\|_p^2$, but for any convex function bounded above by a constant on a unit $\ell_p$-ball; see e.g.,~\cite[Example 5.1]{d2018optimal}. Thus, in this case, we work with $\psi(\vx) = \frac{1}{p}\|\vx\|_p^p$, which is only uniformly convex. 

\paragraph{Lower complexity bounds.} We complement the development of algorithms for complementary composite minimization and minimizing the norm of the gradient with lower bounds for the oracle complexity of these problems. Our lower bounds %borrow substantially from -- this doesn't sound good
leverage recent lower bounds for weakly smooth convex optimization from \cite{guzman:2015,diakonikolas:2019}. These existing results suffice for proving lower bounds for minimizing the norm of the gradient, and certify the near-optimality of our approach for the smooth (i.e., with Lipschitz continuous gradient) setting, when $1\leq p\leq 2$. On the other hand, proving lower bounds for complementary convex optimization requires the design of an appropriate oracle model; namely, one that takes into account that our algorithm accesses the gradient oracle of $f$ and solves subroutines of type \eqref{eq:psi-simple} w.r.t.~$\psi$. With this model in place, we combine constructions from uniformly convex nonsmooth lower bounds \cite{Sridharan,Juditsky:2014} with local smoothing \cite{guzman:2015,diakonikolas:2019} to provide novel lower bounds for complementary composite minimization. The resulting bounds show that our algorithmic framework is nearly optimal (i.e., optimal up to poly-logarithmic factors w.r.t.~dimension, target accuracy, regularity constants of the objective, and initial distance to optimum) for all interesting regimes of parameters.

\paragraph{Applications\markupdelete{ to regression problems}.} 

%\textcolor{red}{
%Currently, the applications are:
%\begin{enumerate}
    %\item[(1)] Elastic Net
    %\item[(2)] Bridge Regression with Squared $\ell_p$ norm regularization
    %\item[(3)] Symmetric PSD linear systems with maximum constraint violation guarantee
    %\item[(4)] $\ell_p$-regression
    %\item[(5)] Spectral Variants
    %\item[(6)] Entropy-Regularized Optimal Transport
%\end{enumerate}
%This might be too much, especially since reviewers were not happy with applications from the first version. Perhaps we can remove 5 (and possibly another) to make it more approachable. I also think that pushing some of the proofs of (2) (e.g., Proposition 3 and Lemma 6) to the Appendix might help with readability.
%}

The importance of complementary composite optimization and making the gradients small in $\ell_p$ and $\Schatten_p$ norms is perhaps best exhibited by considering some of the classical \markupdelete{regression} problems that are frequently used in statistics and machine learning. It turns out that considering these \markupdelete{regression} problems \markupdelete{in the appropriate} \markupadd{within the} complementary composite \markupdelete{form} \markupadd{framework} not only leads to faster algorithms in general, but also reveals some interesting properties of the solutions. For example, an application of our framework to \markupadd{risk minimization problems leads to statistically optimal rates or the order $\frac{1}{\sqrt{n}}$ in $\ell_p$ spaces for $p \in (1, 2]$, while simultaneously guaranteeing that the $\ell_p$ norm of the output predictor is within a constant factor of the minimum $\ell_p$ norm over all minimizers of the (unregularized) risk function. When $p$ is close to 1, the latter property can be interpreted as enforcing sparsity, similar to LASSO. }\markupdelete{the complementary composite form of bridge regression (a generalization of lasso and ridge regression; see Section~\ref{sec:apps}) leads to an interesting and well-characterized trade-off between the ``goodness of fit'' of the model and the $\ell_p$ norm of the regressor.}

Section~\ref{sec:apps} provides several illustrative examples of \markupdelete{regression} problems that can be addressed using our framework, including lasso, elastic net, \markupdelete{(b)ridge regression, Dantzig selector,} \markupadd{empirical risk minimization, solving positive semidefinite linear systems with maximum constraint violation guarantee,} $\ell_p$ regression (with standard and correlated errors), and related spectral variants. It is important to note that a single algorithmic framework suffices for addressing all of these problems. Most of the results we obtain in this way are either conjectured or known to be unimprovable.  \markupadd{Furthermore, to illustrate the broad applicability of our methods, we also explore the consequences of minimizing the norm of the gradient for the discrete optimal transport problem. In this case, our space of interest is $\ell_{\infty}$, and we make use of simple quadratic regularization. 
The resulting arithmetic complexity of our method matches many of the recently developed methods for this problem.}

%%%%%%%%%%%%%%%%%%%%%%%%%%%%%%%%%%%%%%%%%%%%%%%%%

%%%%%%%%%%%%%%%%%%% RELATED
\subsection{Further Related Work}\label{sec:related-work}

Nonsmooth convex optimization problems with the composite structure of the objective function $\bar{f}(\vx) = f(\vx) + \psi(\vx)$, where $f$ is smooth and convex, but $\psi$ is nonsmooth, convex, and ``simple,'' are well-studied in the optimization literature~\cite[and references therein]{beck2009fast,nesterov2013gradient,Scheinberg:2014,He:2015,gasnikov2018universal}. The main benefit of exploiting the composite structure lies in the ability to recover accelerated rates for nonsmooth problems.
One of the most celebrated  results in this domain are the FISTA algorithm from \cite{beck2009fast}, and a method based on composite gradient mapping proposed in~\cite{nesterov2013gradient}, which demonstrated that accelerated convergence (with rate $1/k^2$) is possible for this class of problems.
%\todo[inline]{CG: There might be a problem with the last claim here. I don't know whether  Nesterov 2013 or Beck and Teboulle 2009 came first (Nesterov had a preprint from 2007 \url{http://www.optimization-online.org/DB_FILE/2007/09/1784.pdf}). Perhaps rephrase...}

By comparison, the literature on complementary composite minimization is scarce. For example, in \cite{nesterov2013gradient} it was proved that in a Euclidean space complementary composite optimization \markupadd{with smooth $f$ and strongly convex $\psi$} \markupdelete{attains} \markupadd{admits} a linear convergence rate. The algorithm proposed there is different from ours, as it relies on the use of composite gradient mapping, for which the proximal operator of $\psi$ (solution to problems of the form $\min_{\vx}\{\psi(\vx) + \frac{1}{2}\|\vx -\vz\|_2^2\}$ for all $\vz;$ compare to Eq.~\eqref{eq:psi-simple}) is assumed to be efficiently computable. \markupadd{A more general setting of complementary composite optimization with non-Euclidean norms can be handled by~\cite{allen2017katyusha}; however it still requires computing a non-Euclidean version of the proximal operator of $\psi$, of the form $\min_{\vx}\{\innp{\vz, \vx} + \psi(\vx) + \frac{1}{2}\|\vx -\vx_0\|^2\}$ for any $\vz, \vx_0.$} In addition to being primarily applicable to Euclidean spaces, \markupdelete{this} \markupadd{such} assumption\markupadd{s} \markupadd{about the existence of a generalized proximal operator} further restrict\markupdelete{s} the class of functions that can be efficiently optimized compared to our approach (see Section~\ref{sec:comp-considerations} for a further discussion). 
Another composite algorithm where linear convergence has been proved is the celebrated method from \cite{Chambolle:2011}, where proximal steps are taken w.r.t.~both terms in the composite model ($f$ and $\psi$). In the case where both $f$ and $\psi$ are strongly convex, a linear convergence rate can be established. Notice that this assumption is quite different from our setting, and that this method was only investigated for the Euclidean setup.

Beyond the realm of Euclidean norms, linear convergence results have been established for functions that are \emph{relatively smooth and relatively strongly convex}~\cite{bauschke2017descent,bauschke2019linear,lu2018relatively}. The class of complementary composite functions does not fall into this category. Further, while we show 
accelerated rates (with square-root dependence on the appropriate notion of the condition number)  %is possible 
for complementary composite optimization, such 
results are not attainable 
%results cannot be established 
for relatively smooth relatively strongly convex optimization \markupadd{(see Appendix \ref{app:LB_rel_smooth} for a proof).} \markupadd{In a work independent and parallel to ours, \cite[Appendix E]{cohen2021relative} obtained a linear convergence result that handles complementary composite setting in which $f$ is smooth and $\psi$ is strongly convex, with respect to an arbitrary norm, under a similar assumption about ``simplicity'' of $\psi$ to ours, as stated in \eqref{eq:psi-simple}.}
%~\cite{dragomir2019optimal}.\footnote{Lower bounds from \cite{dragomir2019optimal} show the impossibility of acceleration for the relatively smooth setting. \markupadd{A simple reduction further shows that in the relatively smooth and relatively strongly convex setting, no acceleration is possible, thus the linear convergence rate in \cite{lu2018relatively} is nearly-optimal.}} %This is strong evidence of the impossibility of acceleration in the relatively smooth and relatively strongly convex setting.}

The problem of minimizing the norm of the gradient has become a central question in optimization and its applications in machine learning, mainly motivated by nonconvex settings, where the norm of the gradient is useful as a stopping criterion. However, the norm of the gradient is also useful in linearly constrained convex optimization problems, where the norm of the gradient of a Fenchel dual is useful in controlling the feasibility violation in the primal \cite{nesterov2012make}. Our approach for minimizing the norm of the gradient is inspired by the regularization approach proposed in \cite{nesterov2012make}. As discussed earlier, this regularization approach is not directly applicable to non-Euclidean settings, and is where our complementary composite framework becomes crucial. %, note however this approach is not directly applicable to non-Euclidean settings, where we show that the complementary composite approach leads to nearly optimal rates for $2$-uniformly smooth spaces.

Finally, our work is inspired by and uses fundamental results about the geometry of high-dimensional normed spaces; in particular, the fact that for $\ell_p$ and $\Schatten_p$ spaces the optimal constants of uniform convexity are known~\cite{Ball:1994}. These results imply that powers of the respective norm are uniformly convex, which suffices for our regularization. Moreover, those functions have explicitly computable convex conjugates (problems as in Eq.~\eqref{eq:psi-simple} can be solved in closed form), which is crucial for our algorithms to work.

%%%%%%%%%%%%%%%%%% PRELIMS
\subsection{Notation and Preliminaries}\label{sec:prelims}

Throughout the paper, we use boldface letters to denote vectors and italic letters to denote scalars. 

We consider real finite-dimensional normed vector spaces $\mathbf{E},$ endowed with a norm $\|\cdot\|,$ and denoted by $(\mathbf{E}, \|\cdot\|).$ The space dual to $(\mathbf{E}, \|\cdot\|)$ is denoted by $(\mathbf{E}^*, \|\cdot\|_*),$ where $\|\cdot\|_*$ is the norm dual to $\|\cdot\|,$ defined in the usual way by $\|\vz\|_* = \sup_{\vx \in \mathbf{E}: \|\vx\|\leq 1}\innp{\vz, \vx},$ where $\innp{\vz, \vx}$ denotes the evaluation of a linear functional $\vz$ on a point $\vx \in \mathbf{E}.$ As a concrete example, we may consider the $\ell_p$ space $(\rr^d, \|\cdot\|_p),$ where $\|\vx\|_p = \big(\sum_{i=1}^d |x_i|^p\big)^{1/p},$ $1\leq p \leq \infty.$ The space dual to $(\rr^d, \|\cdot\|_p)$ is isometrically isomorphic to the space $(\rr^d, \|\cdot\|_{p_{\ast}}),$ where $\frac{1}{p}+ \frac{1}{p_{\ast}} = 1.$ Throughout, given $1\leq p\leq \infty$, we refer to $p_{\ast}=\frac{p}{p-1}$ as the conjugate exponent to $p$ (notice that $1\leq p_{\ast}\leq \infty$, and $\frac1p+\frac{1}{p_{\ast}}=1$). %\todo[]{I added this piece of definition to not repeat over and over.}
The (closed) $\|\cdot\|$-norm ball centered at $\vx$ with radius $R>0$ is denoted by ${\cal B}_{\|\cdot\|}(\vx,R)$. 
We start by recalling some standard definitions from convex analysis. 

%We consider real finite-dimensional normed vector spaces $(\rr^d, \|\cdot\|),$ where $\|\cdot\|$ is chosen as one of the standard $\ell_p$ norms $\|\vx\|_p = \big(\sum_{i=1}^d |x_i|^p\big)^{1/p},$ where $p \geq 1.$ This space is isometrically isomorphic to the space $(\rr^d, \|\cdot\|_*),$ where $\|\cdot\|_*$ is the norm dual to $\|\cdot\|,$ defined in the usual way as: $\|\vz\|_* = \sup_{\vx \in \rr^d: \|\vx\|\leq 1}\innp{\vx, \vz},$ where $\innp{\cdot, \cdot}$ denotes the standard inner product. In the case of $\ell_p$ norms, the norm dual to $\|\cdot\|_p$ is the norm $\|\cdot\|_{p_{\ast}},$ where $\frac{1}{p}+ \frac{1}{p_{\ast}} = 1.$

%\todo[inline]{Define standard shit like convex functions, (weakly) smooth functions, complexity of a class, convex conjugates, etc.}

\begin{definition}\label{def:weak-smoothness}
A function $f:\bE \to \rr$ is said to be $(L, \kappa)$-weakly smooth w.r.t.~a norm $\|\cdot\|$, where $L > 0$ and $\kappa \in (1, 2],$ if its gradients are $(L, \kappa - 1)$ H\"{o}lder continuous, i.e., if
$$
    (\forall \vx, \vy \in \bE):\quad \|\nabla f(\vx) - \nabla f(\vy)\|_* \leq L \|\vx - \vy\|^{\kappa - 1}.
$$
We denote the class of $(L,\kappa)$-weakly smooth functions w.r.t.~$\|\cdot\|$ by ${\cal F}_{\|\cdot\|}(L,\kappa)$.
\end{definition}
Note that when $\kappa = 1,$ the function may not be differentiable. Since we will only be working with functions that are proper, convex, and lower semicontinuous, we will still have that $f$ is subdifferentiable on the interior of its domain \cite[Theorem 23.4]{Rockafellar:1970}. The definition of $(L, \kappa)$-weakly smooth functions then boils down to the bounded variation of the subgradients. %, which can be shown to be equivalent to Lipschitz continuity of $f.$\todo[inline]{CG: $(L,1)$-weak smoothness does not imply Lipschitz continuity, it's the other way around. Perhaps delte after: ''which can be shown...''.} 
%\todo[inline]{CG: I'm being picky here, but can we replace proper by real-valued above? I don't think proper, convex, and lower semicontinuous suffice for subdifferentiability. Example: indicator function does not have subgradients outside the domain.}

\begin{definition}\label{def:unif-cvx}
A function $\psi: \bE \to \rr$ is said to be $q$-uniformly convex w.r.t.~a norm $\|\cdot\|$ and with constant $\lambda$ (and we refer to such functions as $(\lambda, q)$-uniformly convex), where $\lambda \geq 0$ and $q \geq 2$, if $\forall \alpha \in (0, 1):$
$$
    (\forall \vx, \vy \in \bE):\quad \psi((1-\alpha)\vx + \alpha\vy) \leq (1-\alpha)\psi(\vx) + \alpha \psi(\vy) - \frac{\lambda}{q} \alpha(1-\alpha)\|\vy - \vx\|^q.
$$
We denote the class of $(\lambda,q)$-uniformly convex functions w.r.t.~$\|\cdot\|$ by ${\cal U}_{\|\cdot\|}(\lambda,q)$.
\end{definition}
%
%With the abuse of notation, we will often use $\nabla \psi(\vx)$ to denote an arbitrary but fixed element of $\partial \psi(\vx).$ 
When $\psi$ is only subdifferentiable (but not differentiable), we make a mild assumption that the subgradient oracle of $\psi$ is consistent, i.e., that it returns the same element of $\partial \psi(\vx)$ whenever queried at the same point $\vx.$ 

Observe that when $\lambda = 0,$ uniform convexity reduces to  standard convexity, while for $\lambda > 0$ and $q = 2$ we recover the definition of strong convexity. We only consider functions that are lower semicontinuous, convex, and proper. These properties suffice for a function to be subdifferentiable on the interior of its domain. It is then not hard to show that if $\psi$ is $(\lambda, q)$-uniformly convex w.r.t.~a norm $\|\cdot\|$ and $\vg_{\vx} \in \partial \psi(\vx)$ is its subgradient at a point $\vx,$ we have
\begin{equation}
    (\forall \vy \in \bE):\quad \psi(\vy) \geq \psi(\vx) + \innp{\vg_{\vx}, \vy - \vx} + \frac{\lambda}{q}\|\vy - \vx\|^q.
\end{equation}

\begin{definition}\label{def:cvx-conj}
Let $\psi:\bE \to \rr\cup \{+\infty\}.$ The convex conjugate of $\psi,$ denoted by $\psi^*$, is defined by
$$
    (\forall \vz \in \bE^*): \quad \psi^*(\vz) = \sup_{\vx \in \bE}\{\innp{\vz, \vx} - \psi(\vx)\}.
$$
\end{definition}
Recall that the convex conjugate of any function is convex. Some simple examples of conjugate pairs of functions that will be useful for our analysis are: (i) univariate functions $\frac{1}{p}|\cdot|^p$ and $\frac{1}{p_{\ast}}|\cdot|^{p_{\ast}},$ where 
$1<p<\infty$ %$\frac{1}{p} + \frac{1}{p_{\ast}} = 1$ 
(see, e.g.,~\cite[Exercise 4.4.2]{borwein2004techniques}) and (ii) functions $\frac{1}{2}\|\cdot\|^2$ and $\frac{1}{2}\|\cdot\|_*^2,$ where norms $\|\cdot\|$ and $\|\cdot\|_*$ are dual to each other (see, e.g.,~\cite[Example 3.27]{boyd2004convex}). The latter example can be easily adapted to prove that the functions $\frac1p\|\cdot\|^p$ and $\frac{1}{p_{\ast}}\|\cdot\|_{\ast}^{p_{\ast}}$ are conjugates of each other, for $1< p< \infty$.
%\todo[inline]{Can we include the example of conjugate pairs $\frac1p\|\cdot\|^p$ and $\frac{1}{p_{\ast}}\|\cdot\|_{\ast}^{p_{\ast}}$?}

The following auxiliary facts will be useful for our analysis.

\begin{fact}\label{fact:danskin}
Let $\psi: \bE\to \rr \cup \{+\infty\}$ be proper, convex, and lower semicontinuous, and let $\psi^*$ be its convex conjugate.  Then $\psi^*$ is proper, convex, and lower semicontinuous (and thus subdifferentiable on the interior of its domain) and $\forall \vz \in \mathrm{int\,dom}(\psi^*)$:  $\vg \in \partial\psi^*(\vz)$ if and only if $\vg \in \argsup_{\vx \in \rr^d}\{\innp{\vz, \vx} - \psi(\vx)\}.$
%\todo[inline]{Here again, I have some doubts on the subdifferentiability.\\
%JD: I am convinced I have seen it somewhere that if $f$ is proper, convex, and lsc then it is subdifferentiable on its domain. There is a variant of this statement in Rockafellar's book (Theorem 23.4), but without the lsc assumption. It states that the subdifferential set at $\vx$ is nonempty and bounded if and only if $\vx \in \mathrm{int}\mathrm{dom}(f(\vx))$. I cannot find a statement that includes lsc and would apply to the entire domain. But I guess changing the statement to the interior of the domain should be fine?\\
%CG: Yes! In the interior of the domain is for sure subdifferentiable. }
\end{fact}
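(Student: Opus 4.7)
The plan is to establish this as a direct consequence of standard Fenchel-Moreau duality, reducing everything to the Fenchel--Young equality characterization of the subdifferential.

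First, I would handle the structural properties of $\psi^*$. Convexity and lower semicontinuity follow immediately from the fact that
$$
\psi^*(\vz) \;=\; \sup_{\vx \in \bE}\bigl\{\innp{\vz, \vx} - \psi(\vx)\bigr\}
$$
is a pointwise supremum of a family of continuous affine (and hence convex and lsc) functions of $\vz$, and such suprema preserve both properties. For properness, pick any $\vx_0 \in \dom(\psi)$; then $\psi^*(\vz) \geq \innp{\vz, \vx_0} - \psi(\vx_0) > -\infty$ for every $\vz$, so $\psi^*$ never takes the value $-\infty$. To rule out the possibility that $\psi^*\equiv +\infty$, I would invoke the Fenchel--Moreau theorem: since $\psi$ is proper, convex, and lsc, $\psi^{**}=\psi$, and were $\psi^*\equiv +\infty$ we would obtain $\psi^{**}\equiv -\infty$, contradicting properness of $\psi$. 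Subdifferentiability on the interior of $\dom(\psi^*)$ then follows from the standard fact that proper convex functions are subdifferentiable on the interior of their effective domain (e.g., Rockafellar, Thm.~23.4, already cited in the excerpt).

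For the equivalence, the key tool is the Fenchel--Young inequality
$$
\psi(\vx) + \psi^*(\vz) \;\geq\; \innp{\vz, \vx} \qquad \forall \vx\in\bE,\;\vz\in\bE^*,
$$
with equality precisely when $\vx$ attains the supremum defining $\psi^*(\vz)$. Applying this inequality to the biconjugate, and using $\psi^{**}=\psi$ to identify $(\psi^*)^* $ with $\psi$, one gets the symmetric statement: $\vg\in\partial\psi^*(\vz)$ if and only if $\psi^*(\vz) + \psi(\vg) = \innp{\vz, \vg}$. I would verify this last characterization directly from the definition of the subdifferential of $\psi^*$: the inequality $\psi^*(\vz') \geq \psi^*(\vz) + \innp{\vg, \vz'-\vz}$ for all $\vz'$ rearranges to $\innp{\vz',\vg}-\psi^*(\vz')\leq \innp{\vz,\vg}-\psi^*(\vz)$, so that $\vg$ attains the supremum in the definition of $\psi^{**}(\vg)=\psi(\vg)$, yielding the equality above; conversely, the equality together with Fenchel--Young at $(\vg,\vz')$ gives the subgradient inequality.

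Finally, rearranging $\psi^*(\vz) + \psi(\vg) = \innp{\vz, \vg}$ as $\innp{\vz,\vg}-\psi(\vg) = \psi^*(\vz) = \sup_{\vx}\{\innp{\vz,\vx}-\psi(\vx)\}$ exactly says $\vg \in \argsup_{\vx}\{\innp{\vz,\vx}-\psi(\vx)\}$, which is the claimed equivalence. No step is a genuine obstacle; the only point requiring care is the appeal to Fenchel--Moreau to ensure $\psi^*$ is proper, and the symmetric use of it to convert a subgradient inequality for $\psi^*$ into a Fenchel--Young equality involving $\psi$ itself.
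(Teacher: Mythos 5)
Your proof is correct. The paper does not actually prove this statement---it is recorded as a standard fact from convex analysis---and your argument is precisely the standard justification: convexity, lower semicontinuity, and properness of $\psi^*$ from its definition as a pointwise supremum of affine functions together with Fenchel--Moreau, and then the equivalence via the Fenchel--Young equality $\psi^*(\vz)+\psi(\vg)=\innp{\vz,\vg}$, which on one side is exactly the statement that $\vg$ attains the supremum defining $\psi^*(\vz)$ and on the other side is equivalent to the subgradient inequality for $\psi^*$ at $\vz$ (the direction $\partial\psi^*(\vz)\subseteq\argsup$ being the one that genuinely uses $\psi^{**}=\psi$, hence the properness, convexity, and lower semicontinuity hypotheses). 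One cosmetic remark: invoking Fenchel--Moreau to rule out $\psi^*\equiv+\infty$ is logically fine, though properness of $\psi^*$ also follows more directly from the existence of a continuous affine minorant of $\psi$ (the separation argument that underlies Fenchel--Moreau in the first place); and the hypothesis $\vz\in\mathrm{int\,dom}(\psi^*)$ plays no role in the equivalence itself beyond guaranteeing that $\psi^*(\vz)$ is finite and a subgradient exists, which your argument implicitly respects.
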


%The following proposition will be useful when working with $\ell_p$ spaces. 
%\todo[inline]{CG: I suggest generalizing this as follows. Below it suffices $2\leq q<\infty$. I would keep it that way to be consistent with the setting of the paper.\\
%JD: Is it still true for $q > 2$ and a norm that is not necessarily $\|\cdot\|_q$? Do you have a reference for that? I only know how to prove it for $\frac{1}{q}\|\cdot\|_p^q$ when $q \in \{2, p\}.$\\
%CG: Write $\Phi(x)=\frac1r\big( \|\vx\| \big)^r$ as the composition of $\frac1r|\cdot|^r$ and $\|\cdot\|$ and use the rule of the subdifferential of a composition. The differentiability is obtained if $1<r<\infty$ given that the derivative of $\frac1r|\cdot|^r$ vanishes at 0. BTW, $q/q^{\ast}=q-1$, which I believe is simpler to grasp.}

The following proposition will be repeatedly used in our analysis, and we prove it here for completeness.
\begin{proposition}\label{prop:duality-map-of-p}
Let $(\bE,\|\cdot\|)$ be a normed space with $\|\cdot\|^2:\bE\rightarrow\rr$ differentiable,
%such that the unit ball w.r.t.~$\|\cdot\|$ is strictly convex,  %with $\|\cdot\|^q:\bE\rightarrow\rr$ differentiable, where 
and let 
$1<q<\infty$. Then
$$
    \Big\|\nabla \Big(\frac{1}{q}\|\vx\|^q\Big)\Big\|_{*} =  \|\vx\|^{q-1} = \|\vx\|^{q/q_{\ast}},
$$
where $q_{\ast} = \frac{q}{q-1}$ is the exponent conjugate to $q$.
\end{proposition}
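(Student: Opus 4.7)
The plan is to combine the Young--Fenchel equality with Euler's identity for positively homogeneous functions. First I would dispose of the trivial case $\vx = \mathbf{0}$: there $\frac{1}{q}\|\vx\|^q$ attains its global minimum, so by Fermat's rule its gradient vanishes and both sides of the claimed identity equal zero. From now on assume $\vx \neq \mathbf{0}$, and set $\psi(\vx) = \frac{1}{q}\|\vx\|^q$ and $\vz = \nabla \psi(\vx) \in \bE^*$.

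The key input, noted in the preliminaries right before the statement, is that $\psi$ and $\psi^*(\vu) = \frac{1}{q_{\ast}}\|\vu\|_{\ast}^{q_{\ast}}$ form a pair of Fenchel conjugates. Since $\psi$ is differentiable at $\vx$, first-order optimality shows that $\vx$ attains the supremum in the definition of $\psi^*(\vz)$ (this is precisely the content of Fact~\ref{fact:danskin} used in the easy direction), so the Young--Fenchel equality gives
$$
    \frac{1}{q}\|\vx\|^q + \frac{1}{q_{\ast}}\|\vz\|_{\ast}^{q_{\ast}} = \innp{\vz, \vx}.
$$
Next, $\psi$ is positively $q$-homogeneous, i.e.\ $\psi(t\vx) = t^q \psi(\vx)$ for $t \geq 0$. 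Differentiating this identity in $t$ at $t = 1$ yields Euler's relation $\innp{\nabla \psi(\vx), \vx} = q \psi(\vx)$, which rewrites as $\innp{\vz, \vx} = \|\vx\|^q$. Substituting into the Young--Fenchel equality and rearranging gives $\frac{1}{q_{\ast}}\|\vz\|_{\ast}^{q_{\ast}} = \big(1 - \tfrac{1}{q}\big)\|\vx\|^q = \frac{1}{q_{\ast}}\|\vx\|^q$. Raising both sides to the power $1/q_{\ast}$ and using $q/q_{\ast} = q-1$ yields the claimed identity $\|\vz\|_{\ast} = \|\vx\|^{q/q_{\ast}} = \|\vx\|^{q-1}$.

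The only regularity point to confirm is that differentiability of $\|\cdot\|^2$ on all of $\bE$ implies that $\|\cdot\|^q$ is differentiable on $\bE \setminus \{\mathbf{0}\}$, so that $\nabla \psi(\vx)$ makes sense in the argument above; this is a routine chain-rule computation via $\|\vx\|^q = (\|\vx\|^2)^{q/2}$. Once this mild smoothness is granted, no step requires more than elementary manipulation, so I do not anticipate a genuine obstacle: the value of the statement lies in the clean packaging of Fenchel conjugacy, homogeneity, and Young's equality, each of which is already available by the preliminaries.
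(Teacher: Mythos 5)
Your argument is correct, and it reaches the identity by a route that differs from the paper's in its mechanism. The paper factors the gradient through the quadratic case: it applies the chain rule to $\frac{1}{q}\|\vx\|^q = \frac{1}{q}\big(\|\vx\|^2\big)^{q/2}$ to get $\nabla\big(\tfrac{1}{q}\|\vx\|^q\big) = \|\vx\|^{q-2}\,\nabla\big(\tfrac12\|\vx\|^2\big)$, and then uses Fact~\ref{fact:danskin} together with the conjugacy of $\tfrac12\|\cdot\|^2$ and $\tfrac12\|\cdot\|_*^2$ to conclude $\big\|\nabla\big(\tfrac12\|\vx\|^2\big)\big\|_* = \|\vx\|$. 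You instead stay at the general exponent: you use the conjugacy of $\tfrac1q\|\cdot\|^q$ and $\tfrac{1}{q_\ast}\|\cdot\|_*^{q_\ast}$ (noted in the preliminaries), derive the Fenchel--Young equality $\tfrac1q\|\vx\|^q + \tfrac{1}{q_\ast}\|\vz\|_*^{q_\ast} = \innp{\vz,\vx}$ for $\vz = \nabla\big(\tfrac1q\|\vx\|^q\big)$ via Fact~\ref{fact:danskin}/first-order optimality of the concave maximization, and combine it with Euler's identity $\innp{\vz,\vx} = \|\vx\|^q$ for the $q$-homogeneous function; the norm identity then drops out by arithmetic. Both proofs hinge on the same conjugate-pair facts, but yours trades the explicit gradient factorization for homogeneity: it yields only the norm of the gradient (which is all the proposition asserts), whereas the paper's chain-rule computation additionally exhibits the gradient as $\|\vx\|^{q-2}$ times the duality map, which is sometimes convenient elsewhere. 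Your handling of $\vx=\zeros$ and of the regularity point (differentiability of $\|\cdot\|^q$ away from the origin via $(\|\vx\|^2)^{q/2}$, matching the paper's citation that differentiability of $\|\cdot\|^2$ is equivalent to that of $\|\cdot\|^q$) is adequate, so there is no gap.
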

\begin{proof}
We notice that $\|\cdot\|^2$ is differentiable if and only if  $\|\cdot\|^q$ is differentiable 
\cite[Thm.~3.7.2]{Zalinescu:2002}. 
%~\textcolor{red}{[ADD CITATION].} 
Since the statement clearly holds for $\vx = \zeros,$ in the following we assume that $\vx \neq \zeros.$ 
Next, write $\frac{1}{q}\|\cdot\|^q$ as a composition of functions $\frac{1}{q}|\cdot|^{q/2}$ and $\|\cdot\|^2.$ Applying the chain rule of differentiation, we now have:
\begin{align*}
    \nabla \Big(\frac{1}{q}\|\vx\|^q\Big) = \frac{1}{2}\Big(\|\vx\|^2\Big)^{\frac{q}{2}-1} \nabla \big(\|\vx\|^2\big)
    = \|\vx\|^{q -2} \nabla \Big(\frac{1}{2}\|\vx\|^2\Big).
\end{align*}
It remains to argue that $\Big\| \nabla \Big(\frac{1}{2}\|\vx\|^2\Big) \Big\|_* = \|\vx\|.$ This immediately follows by Fact~\ref{fact:danskin}, as $\frac{1}{2}\|\cdot\|^2$ and $\frac{1}{2}\|\cdot\|_*^2$ are convex conjugates of each other. 
%\todo[inline]{There is an issue with the composition that you chose, since $\frac{1}{q}|\cdot|^{q/2}$ is not convex if $q<2$. I propose an alternative proof below.\\
%JD: Why is convexity needed?\\
%CG: In principle it's not a problem, but you will also need to assume in your proof that $\|\cdot\|^2$ is differentiable.}
\qed\end{proof}
\iffalse %%%%%%%%%%%%%%%%%%%%%%%%%%%%%%
\begin{proof}
Notice that $\frac1q\|\cdot\|^q$ is the composition of convex function $\|\cdot\|$ with the convex nondecreasing function $\frac1q[\,\cdot\,]_+^q$. This way, we can use the chain rule for the subdifferential (see e.g. \cite[Thm.~3.47]{Beck:2017})
$$ \partial\Big( \frac1q\|\cdot\|^q \Big)(\vx) = \partial\Big( \frac1q[\,\cdot\,]_+^q \Big)(\|\vx\|)\cdot \partial \|\cdot\|(\vx)
=\|\vx\|^{q-1}\partial \|\cdot\|(\vx).$$
Note that the lhs is a singleton containing the gradient, thus
$\nabla \frac1q\|\vx\|^q  \in \|\vx\|^{q-1}\cdot \partial \|\cdot\|(\vx).$
If $\vx=0$ we can conclude the result by the last expression; if $\vx\neq 0$, since $\vx$ is strictly convex, then $\partial \|\cdot\|(\vx)$ is a singleton given by a vector $\vz$ with $\|\vz\|_{\ast}=1$, which proves the result in this case as well.
%We conclude using that the norm is 1-Lipschitz w.r.t.~itself, thus $\partial \|\cdot\|(\vx)\subseteq \partial{\cal B}_{\|\cdot\|_{\ast}}(0,1)$.
\todo{How does this imply the stated equality? CG: Sorry, I missed a detail here. I tried an alternative proof. If we replace the differentiability assump by strict convexity of the unit ball (which is morally equivalent), I can get the result. I am fine with either version.} 
\qed\end{proof}
\fi %%%%%%%%%%%%%%%%%%%%%%%%%%%%%%%%%
%
\iffalse
\begin{proposition}\label{prop:duality-map-of-p}
For any $p \in (1, \infty)$ and any $\vx \in \rr^d$ it holds %\textcolor{red}{Here $1<p<\infty$, right? The border cases are not differentiable.}
$$
    \Big\|\nabla \Big(\frac{1}{2}\|\vx\|_p^2\Big)\Big\|_{p_{\ast}} = \|\vx\|_p \quad \text{and} \quad \Big\|\nabla \Big(\frac{1}{p}\|\vx\|_p^p\Big)\Big\|_{p_{\ast}} = \|\vx\|_{p}^{p/p_{\ast}},
$$
where $p_{\ast} = \frac{p}{p-1}$ is dual to p.
\end{proposition}
%\todo[inline]{CG: Consider adding this observation as an example right after Fenchel conjugate definition. Leads naturally to the proposition as well.\\
%JD: Makes sense.}

Proposition~\ref{prop:duality-map-of-p} can be proved by  verifying that the stated equality holds, though a more direct proof is just a corollary of Fact~\ref{fact:danskin}, using that function pairs $\frac{1}{2}\|\cdot\|^2$, $\frac{1}{2}\|\cdot\|_*^2$ and $\frac{1}{p}\|\cdot\|_p^p,$ $\frac{1}{p_{\ast}}\|\cdot\|_{p_{\ast}}^{p_{\ast}}$ are conjugates of each other. %$\nabla \big(\frac{1}{2}\|\vx\|_p^2\big)$ being a duality map of $\vx$ in $(\rr^d, \|\cdot\|_p)$, i.e., $\vu = \nabla \big(\frac{1}{2}\|\vx\|_p^2\big)$ satisfies $\innp{\vu, \vx} = \|\vx\|_p^2 = \|\vu\|_{p_{\ast}}^2.$
\fi

We also state here a lemma that allows approximating weakly smooth functions by weakly smooth functions of a different order. A variant of this lemma (for $p = 2$) first appeared in \cite{devolder2014first}, while the more general version stated here is from~\cite{d2018optimal}.
\begin{lemma}\label{lemma:p-weakly-smooth-approx}
Let $f: \bE \to \rr$ be a function that is $(L, \kappa)$-weakly smooth w.r.t.~some norm $\|\cdot\|$. Then for any $\delta > 0$ and
\begin{equation}\label{eq:M-w-smooth}
    M \geq \Big[\frac{2(p-\kappa)}{p\kappa \delta}\Big]^{\frac{p-\kappa}{\kappa}}L^{\frac{p}{\kappa}}
\end{equation}
we have
$$
    (\forall \vx, \vy \in \bE):\quad f(\vy) \leq f(\vx) + \innp{\nabla f(\vx), \vy - \vx} + \frac{M}{p}\|\vy - \vx\|^p + \frac{\delta}{2}.
$$
\end{lemma}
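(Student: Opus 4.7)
The plan is to reduce the statement to a one-dimensional Young-type inequality in $t = \|\vy-\vx\|$, and then verify the threshold on $M$ by exact optimization.

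First, I would derive the standard descent lemma implied by $(L,\kappa)$-weak smoothness. For any $\vx,\vy \in \bE$, using convexity of $f$ together with H\"older continuity of the gradient, the fundamental theorem of calculus applied to $g(t) = f(\vx + t(\vy-\vx))$ and the Cauchy--Schwarz-type bound $\innp{\vg,\vh} \leq \|\vg\|_*\|\vh\|$ gives
\begin{equation*}
    f(\vy) - f(\vx) - \innp{\nabla f(\vx), \vy - \vx}
    \;=\; \int_0^1 \innp{\nabla f(\vx + t(\vy - \vx)) - \nabla f(\vx), \vy - \vx}\,\dd t
    \;\leq\; \frac{L}{\kappa}\|\vy - \vx\|^{\kappa},
\end{equation*}
after plugging in the H\"older bound $\|\nabla f(\vx + t(\vy-\vx))-\nabla f(\vx)\|_{\ast} \leq L\, t^{\kappa-1}\|\vy - \vx\|^{\kappa - 1}$ and integrating $t^{\kappa - 1}$.

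The remaining task is therefore purely scalar: show that for all $s \geq 0$,
\begin{equation*}
    \frac{L}{\kappa}\, s^{\kappa} \;\leq\; \frac{M}{p}\, s^{p} + \frac{\delta}{2},
\end{equation*}
where $s = \|\vy - \vx\|$. I would prove this by computing the maximum of $\varphi(s) := \tfrac{L}{\kappa}s^\kappa - \tfrac{M}{p}s^p$ over $s \geq 0$. Since $1 < \kappa \leq 2 < p$ (or $\kappa \leq p$ in general with $\kappa < p$), $\varphi$ is concave-like with a unique interior maximizer at $s_{\ast} = (L/M)^{1/(p-\kappa)}$, obtained by setting $L s^{\kappa - 1} = M s^{p-1}$. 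Substituting back yields
\begin{equation*}
    \max_{s \geq 0} \varphi(s) \;=\; \Big(\tfrac{1}{\kappa} - \tfrac{1}{p}\Big)\frac{L^{p/(p-\kappa)}}{M^{\kappa/(p-\kappa)}}
    \;=\; \frac{p - \kappa}{p\kappa}\cdot\frac{L^{p/(p-\kappa)}}{M^{\kappa/(p-\kappa)}}.
\end{equation*}
Imposing that this quantity is at most $\delta/2$ and solving for $M$ gives exactly the threshold
$M \geq \big[\frac{2(p-\kappa)}{p\kappa\delta}\big]^{(p-\kappa)/\kappa} L^{p/\kappa}$ stated in Eq.~\eqref{eq:M-w-smooth}. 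Combining this with the descent lemma yields the claim.

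I do not expect any serious obstacle here: the descent lemma is routine integration, and the scalar inequality is an elementary calculus exercise (equivalent to a weighted Young's inequality with exponents $p/\kappa$ and $p/(p-\kappa)$, which are conjugate). The only points that require a bit of care are (i) handling the boundary case $s = 0$ (trivial since both sides are zero there, so only the interior maximizer matters), and (ii) making sure the algebra of the exponents $p/(p-\kappa)$ and $\kappa/(p-\kappa)$ is tracked cleanly so that the final bound on $M$ matches the stated form.
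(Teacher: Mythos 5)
Your proposal is correct and is essentially the standard argument: the paper does not prove this lemma itself (it cites \citet{devolder2014first} and \citet{d2018optimal}), and those proofs proceed exactly as you do, via the descent inequality $f(\vy)-f(\vx)-\innp{\nabla f(\vx),\vy-\vx}\le \frac{L}{\kappa}\|\vy-\vx\|^{\kappa}$ followed by the scalar Young-type bound $\frac{L}{\kappa}s^{\kappa}\le \frac{M}{p}s^{p}+\frac{\delta}{2}$, whose optimization over $s$ yields precisely the stated threshold on $M$. The only cosmetic points are that convexity of $f$ is not needed for the descent step, and the degenerate case $p=\kappa$ (where the interior maximizer argument does not apply) holds trivially since then the condition reads $M\ge L$.
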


Finally, the following lemma will be useful when bounding the gradient norm in Section~\ref{sec:small-grad-norm} (see also \cite[Section 3.5]{Zalinescu:2002}). 
%\todo{What is a good reference for this? I would go for \cite[Section 3.5]{Zalinescu:2002}. Or \cite{Zalinescu:1983}. They are a bit cryptic, but they have what we need.}

\begin{lemma}\label{lemma:weak-smooth-cocoercivity}
Let $f: \bE \to \rr$ be a function that is convex and $(L, \kappa)$-weakly smooth w.r.t.~some norm $\|\cdot\|$. Then:
\begin{equation*}
    (\forall \vx, \vy \in \bE): \frac{\kappa - 1}{L^{\frac{1}{\kappa -1}} \kappa}\|\nabla f(\vy) - \nabla f(\vx)\|_{\ast}^{\frac{\kappa}{\kappa - 1}} \leq f(\vy) - f(\vx) - \innp{\nabla f(\vx), \vy - \vx}.
\end{equation*}
\end{lemma}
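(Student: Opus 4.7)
The plan is to mimic the standard co-coercivity argument for smooth convex functions ($\kappa=2$), but replace the quadratic upper bound by an order-$\kappa$ upper bound derived from Hölder continuity of the gradient, and use Fenchel conjugacy with general norms to minimize it. I assume the norm on $\|\nabla f(\vy)-\nabla f(\vx)\|$ is the dual norm $\|\cdot\|_*$, since gradients live in $\bE^*$.

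First I would establish the ``descent lemma'' for $(L,\kappa)$-weakly smooth convex functions, namely
\begin{equation*}
f(\vy) \leq f(\vx) + \innp{\nabla f(\vx), \vy - \vx} + \frac{L}{\kappa}\|\vy - \vx\|^\kappa \qquad \forall \vx,\vy \in \bE.
\end{equation*}
This follows by writing $f(\vy)-f(\vx)-\innp{\nabla f(\vx), \vy-\vx}=\int_0^1 \innp{\nabla f(\vx+t(\vy-\vx))-\nabla f(\vx),\vy-\vx}\,\dd t$, applying the Cauchy–Schwarz-type inequality $\innp{\vz,\vw}\leq\|\vz\|_*\|\vw\|$, and then Definition~\ref{def:weak-smoothness} to bound $\|\nabla f(\vx+t(\vy-\vx))-\nabla f(\vx)\|_*\leq Lt^{\kappa-1}\|\vy-\vx\|^{\kappa-1}$. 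Integration in $t$ produces the factor $1/\kappa$.

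Next I would fix $\vx$ and introduce the auxiliary function $\phi_{\vx}(\vu)\defeq f(\vu)-\innp{\nabla f(\vx),\vu}$. This function is convex with $\nabla\phi_{\vx}(\vx)=0$, hence attains its global minimum at $\vu=\vx$. Applying the descent lemma to $\phi_\vx$ (which is also $(L,\kappa)$-weakly smooth, since it differs from $f$ by a linear term) at the pair $(\vy,\vu)$ yields the upper bound
\begin{equation*}
\phi_{\vx}(\vu)\leq \phi_{\vx}(\vy)+\innp{\nabla\phi_{\vx}(\vy),\vu-\vy}+\frac{L}{\kappa}\|\vu-\vy\|^{\kappa}.
\end{equation*}
Taking the infimum over $\vu\in\bE$ on both sides, the left side becomes $\phi_\vx(\vx)$, while the right side can be computed in closed form using the conjugacy of $\tfrac{1}{\kappa}\|\cdot\|^{\kappa}$ and $\tfrac{1}{\kappa_*}\|\cdot\|_*^{\kappa_*}$ (with $\kappa_*=\kappa/(\kappa-1)$) discussed after Definition~\ref{def:cvx-conj}, together with the scaling identity $(L h)^*(\vz)=L\,h^*(\vz/L)$ for $h=\tfrac{1}{\kappa}\|\cdot\|^{\kappa}$. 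This produces
\begin{equation*}
\inf_{\vu}\Bigl\{\innp{\nabla\phi_{\vx}(\vy),\vu-\vy}+\frac{L}{\kappa}\|\vu-\vy\|^{\kappa}\Bigr\}=-\frac{\kappa-1}{\kappa\,L^{1/(\kappa-1)}}\|\nabla\phi_{\vx}(\vy)\|_{*}^{\kappa/(\kappa-1)}.
\end{equation*}

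Finally, rearranging the resulting inequality $\phi_{\vx}(\vx)\leq \phi_{\vx}(\vy)-\tfrac{\kappa-1}{\kappa L^{1/(\kappa-1)}}\|\nabla\phi_{\vx}(\vy)\|_{*}^{\kappa/(\kappa-1)}$ and substituting $\nabla\phi_{\vx}(\vy)=\nabla f(\vy)-\nabla f(\vx)$ and $\phi_{\vx}(\vy)-\phi_{\vx}(\vx)=f(\vy)-f(\vx)-\innp{\nabla f(\vx),\vy-\vx}$ gives exactly the stated bound. The main technical subtlety, and the only step that might tempt a routine miscalculation, is correctly tracking the constants when computing the Fenchel conjugate of $\tfrac{L}{\kappa}\|\cdot\|^{\kappa}$ in a general norm; beyond this, every step is a direct generalization of the classical $\kappa=2$ case.
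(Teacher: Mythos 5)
Your proposal is correct and follows essentially the same route as the paper: both reduce the claim to the order-$\kappa$ descent lemma applied to the shifted function $f(\cdot)-\innp{\nabla f(\vx),\cdot}$ (minimized at $\vx$), and then evaluate the infimum of the upper model via the conjugacy of $\tfrac{1}{\kappa}\|\cdot\|^{\kappa}$ and $\tfrac{1}{\kappa_{\ast}}\|\cdot\|_*^{\kappa_{\ast}}$, with matching constants. The only difference is that you spell out the integral proof of the descent lemma and the scaling step in the conjugate computation, which the paper takes as known.
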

\begin{proof}
Let $h(\vx)$ be any $(L, \kappa)$-weakly smooth function and let $\vx^{\star} \in \argmin_{\vx \in \rr^d}h(\vx).$ As $h$ is $(L, \kappa)$-weakly smooth, we have for all $\vx, \vy \in \rr^d:$
$$
    h(\vy) \leq h(\vx) + \innp{\nabla h(\vx), \vy - \vx} + \frac{L}{\kappa}\|\vy - \vx\|^{\kappa}.
$$
Fixing $\vx \in \rr^d$ and minimizing both sides of the last inequality w.r.t.~$\vy \in \rr^d$, it follows that
\begin{equation}\label{eq:h-smooth-min}
    h(\vx^{\star}) \leq h(\vx) - \frac{L^{1-\kappa_{\ast}}}{\kappa_{\ast} }\|\nabla h(\vx)\|_*^{\kappa_{\ast}},
\end{equation}
%where $\kappa_{\ast} = \frac{\kappa}{\kappa -1},$ and 
where we have used 
%the definition of a convex conjugate and the fact 
that the functions $\frac{1}{\kappa}\|\cdot\|^{\kappa}$ and $\frac{1}{\kappa_{\ast}}\|\cdot\|_*^{\kappa_{\ast}}$ are convex conjugates of each other. 

To complete the proof, it remains to apply Eq.~\eqref{eq:h-smooth-min} to function $h_{\vx}(\vy) = f(\vy) - \innp{\nabla f(\vx), \vy - \vx}$ for any fixed $\vx \in \rr^d,$ and observe that $h_{\vx}(\vy)$ is convex, $(L, \kappa)$-weakly smooth, and minimized at $\vy = \vx.$ 
\qed\end{proof}
%%%%%%%%%%%%%%%%%%%%%%
%%%%%%%%%%%%%%%%%%%%%%
\section{Complementary Composite Minimization}\label{sec:comp-min}
In this section, we consider minimizing complementary composite functions, which are of the form 
\begin{equation}\label{eq:comp-fun}
    \Bar{f}(\vx) = f(\vx) + \psi(\vx),
\end{equation}
where $f$ is $(L, \kappa)$-weakly smooth w.r.t.~some norm $\|\cdot\|$, $\kappa \in (1,2],$ and $\psi$ is $(\lambda, q)$-uniformly convex w.r.t.~the same norm, for some $q\geq 2$, $\lambda \geq 0$. We assume that the feasible set $\cx \subseteq \bE$ is closed, convex, and nonempty. 

%\todo[inline]{CG: Are we doing constrained optimization? Because of ${\cal X}$\\
%JD: In this case, it's easy to do constrained optimization (not any harder than unconstrained), so I thought why not? For the gradient norm it's a bit harder to justify, but for this one the proof works for any strongly convex or unif convex $\phi, \psi$, so it's conceivable to have some interesting constrained setups.}

%%%%%%%%%%%%%%%%%%%%%%%%%
\subsection{Algorithmic Framework and Convergence Analysis}
%We show that a generalized variant of AGD+ from \cite{cohen2018acceleration} converges at a rate that will lead to the desired iteration complexities for all $\kappa \in (1,2], p \geq 1.$ The variant of the algorithm can be stated as:
%\todo[inline]{CG: I think it would look better aligning the equations below. Or even using an Algorithm envorinment?}
The algorithmic framework we consider is a generalization of AGD+ from \cite{cohen2018acceleration},\footnote{The same method is also known as the method of similar triangles, introduced independently in~\cite{gasnikov2018universal}. This method is also related to Tseng's accelerated proximal gradient method~\cite{tseng2008accelerated}, and can be seen as its ``lazy'' or dual averaging counterpart.} stated as follows:
\begin{tcolorbox}
{\bf Generalized AGD+}
\begin{equation}\label{eq:mod-agd+}
    \begin{aligned}
        \vx_k &= \frac{A_{k-1}}{A_k}\vy_{k-1} + \frac{a_k}{A_k}\vv_{k-1}\\
        \vv_k &= \argmin_{\vu \in \cx}\Big\{\sum_{i=0}^k a_i\innp{\nabla f(\vx_i), \vu - \vx_i} + A_k \psi(\vu) + m_0 \phi(\vu)\Big\}\\
        \vy_k &= \frac{A_{k-1}}{A_k}\vy_{k-1} + \frac{a_k}{A_k}\vv_k,\\
        \vy_0 &= \vv_0, \; \vx_0 \in \cx,
    \end{aligned}
\end{equation}
\end{tcolorbox}
\noindent 
where $m_0$ and the sequence of positive numbers $\{a_k\}_{k\geq 0}$ are parameters of the algorithm specified in the convergence analysis below, $A_k = \sum_{i=0}^k a_i,$ and we take $\phi(\vu)$ to be a function that satisfies $\phi(\vu) \geq \frac{1}{q}\|\vu - \vx_0\|^q.$ For example, if $\lambda > 0,$ we can take $\phi(\vu) = \frac{1}{\lambda} D_{\psi}(\vu, \vx_0).$ Observe also that for this choice of $\phi,$ the minimization problem defining $\vv_k$ is of the form of Eq.~\eqref{eq:psi-simple}. When $\lambda = 0$, we take $\phi$ to be $(1, q)$-uniformly convex. 

%We start with the following lemma which applies to an arbitrary composite function described above (Eq.~\eqref{eq:comp-fun}).

%\todo[inline]{JD: The lemma below should probably be turned into a theorem and the proof split into supporting lemmas. Can be done later.}

The convergence analysis relies on the approximate duality gap technique (ADGT) of~\cite{diakonikolas2019approximate}. The main idea is to construct an upper estimate $G_k \geq \Bar{f}(\vy_k) - \Bar{f}(\vxb^{\star})$ of the true optimality gap, where $\vxb^{\star} = \argmin_{\vx \in \cx}\Bar{f}(\vu),$ and then argue that $A_k G_k \leq A_{k-1}G_{k-1} + E_k$, which in turn implies:
$$
    \Bar{f}(\vy_k) - \Bar{f}(\vxb^{\star}) \leq \frac{A_0G_0}{A_k} + \frac{\sum_{i=1}^k E_i}{A_k}.
$$
Thus, as long as $A_0 G_0$ is bounded and the cumulative error $\sum_{i=1}^k E_i$ is either bounded or increasing slowly compared to $A_k$, the optimality gap of the sequence $\vy_k$ converges to the optimum at rate $(1 + \sum_{i=1}^k E_i)/A_k.$ The goal is, of course, to make $A_k$ as fast-growing as possible, but that turns out to be limited by the requirement that $A_k G_k$ be non-increasing or slowly increasing compared to $A_k$.

The gap $G_k$ is constructed as the difference $U_k - L_k,$ where $U_k \geq \Bar{f}(\vy_k)$ is an upper bound on $\Bar{f}(\vy_k)$ and $L_k \leq \Bar{f}(\vxb^{\star})$ is a lower bound on $\Bar{f}(\vxb^{\star}).$ In this particular case, we make the following choices: 
\begin{equation*}
    U_k = f(\vy_k) + \frac{1}{A_k}\sum_{i=0}^k a_i \psi(\vv_i).
\end{equation*}
As $\vy_k = \frac{1}{A_k}\sum_{i=0}^k a_i \vv_i,$ we have, by Jensen's inequality:
$
    U_k \geq f(\vy_k) + \psi(\vy_k) = \Bar{f}(\vy_k), 
$ 
i.e., $U_k$ is a valid upper bound on $\Bar{f}(\vy_k).$ 

For the lower bound, we use the following inequalities:
\begin{align*}
    \Bar{f}(\vxb^{\star}) \geq \; &\frac{1}{A_k}\sum_{i=0}^k a_i f(\vx_i) + \frac{1}{A_k}\sum_{i=0}^k a_i \innp{\nabla f(\vx_i), \vxb^{\star} - \vx_i} + \psi(\vxb^{\star})\\
    &+ \frac{m_0}{A_k}\phi(\vxb^{\star})- \frac{m_0}{A_k}\phi(\vxb^{\star})\\
    \geq\; & \frac{1}{A_k}\sum_{i=0}^k a_i f(\vx_i) - \frac{m_0}{A_k}\phi(\vxb^{\star})\\
    & + \frac{1}{A_k}\min_{\vu \in \cx}\Big\{\sum_{i=0}^k a_i \innp{\nabla f(\vx_i), \vu - \vx_i} + A_k \psi(\vu) + m_0 \phi(\vu)\Big\}\\
    =:\; & L_k,
\end{align*}
where the first inequality uses $$f(\vxb^{\star}) \geq \frac{1}{A_k}\sum_{i=0}^k a_i f(\vx_i) + \frac{1}{A_k}\sum_{i=0}^k a_i \innp{\nabla f(\vx_i), \vxb^{\star} - \vx_i},$$ by convexity of $f.$

We start by bounding the initial (scaled) gap $A_0 G_0.$ 
\begin{lemma}[Initial Gap]\label{lemma:init-gap}
For any $\delta_0 > 0$ and $M_0 = \Big[\frac{2(q-\kappa)}{q\kappa \delta_0}\Big]^{\frac{q-\kappa}{\kappa}}L^{\frac{q}{\kappa}},$ if $A_0 M_0 = m_0,$ then 
$$A_0 G_0 \leq m_0 \phi(\vxb^{\star})  + \frac{A_0 \delta_0}{2}.$$
\end{lemma}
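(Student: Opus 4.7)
The plan is to unwind the definitions at $k=0$, apply the weak-smoothness approximation of Lemma~\ref{lemma:p-weakly-smooth-approx} at order $p=q$, and then use the identification $A_0 M_0 = m_0$ together with the lower bound $\phi(\vu)\geq \frac{1}{q}\|\vu-\vx_0\|^q$ to make the two $q$-th power terms cancel exactly.

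First I would write out $A_0 G_0 = A_0(U_0 - L_0)$ explicitly using the definitions given just above the lemma. Since $\vy_0 = \vv_0$ and $A_0 = a_0$, Jensen's inequality gives $U_0 = f(\vv_0) + \psi(\vv_0)$ with equality, while $L_0$ becomes
\[
L_0 = f(\vx_0) + \innp{\nabla f(\vx_0),\vv_0-\vx_0} + \psi(\vv_0) + \frac{m_0}{A_0}\phi(\vv_0) - \frac{m_0}{A_0}\phi(\vxb^*),
\]
where I have used that the minimizer defining $L_0$ is $\vv_0$ itself (this is exactly the $k=0$ instance of the update rule for $\vv_k$). The $A_0\psi(\vv_0)$ terms cancel, and what remains is
\[
A_0 G_0 = A_0\bigl[f(\vv_0) - f(\vx_0) - \innp{\nabla f(\vx_0),\vv_0-\vx_0}\bigr] + m_0\bigl[\phi(\vxb^*) - \phi(\vv_0)\bigr].
\]

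Next I would apply Lemma~\ref{lemma:p-weakly-smooth-approx} with $p = q$ and $\delta = \delta_0$ to the first bracket. Since the chosen $M_0$ is precisely the threshold value stated in \eqref{eq:M-w-smooth} with $p=q$, the lemma yields
\[
f(\vv_0) - f(\vx_0) - \innp{\nabla f(\vx_0),\vv_0-\vx_0} \leq \frac{M_0}{q}\|\vv_0-\vx_0\|^q + \frac{\delta_0}{2}.
\]
Substituting this estimate into the previous display and using $A_0 M_0 = m_0$ gives
\[
A_0 G_0 \leq \frac{m_0}{q}\|\vv_0-\vx_0\|^q + \frac{A_0\delta_0}{2} + m_0\phi(\vxb^*) - m_0\phi(\vv_0).
\]

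Finally, the assumption $\phi(\vu) \geq \frac{1}{q}\|\vu-\vx_0\|^q$ (imposed on $\phi$ in the description of the algorithm) gives $-m_0\phi(\vv_0) \leq -\frac{m_0}{q}\|\vv_0-\vx_0\|^q$, so the two $q$-th power terms cancel and we are left with $A_0 G_0 \leq m_0 \phi(\vxb^*) + \tfrac{A_0\delta_0}{2}$, as claimed. The argument is essentially a clean bookkeeping calculation; the only real content is the matching of the weak-smoothness approximation order to the uniform-convexity order of $\phi$ (both equal to $q$), which is what makes the two terms cancel once the scaling $A_0 M_0 = m_0$ is enforced. I do not anticipate any serious obstacle beyond keeping the $a_0 = A_0$ bookkeeping straight at this base step.
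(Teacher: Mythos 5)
Your argument is correct and follows the paper's own proof essentially step for step: expand $U_0$ and $L_0$ at $k=0$ using $\vy_0=\vv_0$ and $a_0=A_0$, bound the linearization error via Lemma~\ref{lemma:p-weakly-smooth-approx} with exponent $q$ and accuracy $\delta_0$, and cancel the $\frac{1}{q}\|\vv_0-\vx_0\|^q$ terms using $\phi(\vu)\geq\frac{1}{q}\|\vu-\vx_0\|^q$ and $A_0M_0=m_0$. (The only cosmetic remark: at $k=0$ the upper bound $U_0=f(\vy_0)+\psi(\vv_0)$ holds by definition with a single term, so no appeal to Jensen is needed.)
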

\begin{proof}
By definition, and using that $a_0 = A_0$,
\begin{align*}
    A_0G_0 =& \; A_0 \Big(f(\vy_0) + \psi(\vv_0) - f(\vx_0) - \innp{\nabla f(\vx_0), \vv_0 - \vx_0} - \psi(\vv_0) - \frac{m_0}{A_0} \phi(\vv_0)\Big)\\
    &+ m_0 \phi(\vxb^{\star})\\
    =&\; A_0(f(\vy_0) - f(\vx_0) - \innp{\nabla f(\vx_0), \vy_0 - \vx_0}) - m_0 \phi(\vy_0) + m_0 \phi(\vxb^{\star})
\end{align*}
where the second line is by $\vy_0 = \vv_0$.   

By assumption, $\phi(\vu) \geq \frac{1}{q}\|\vu - \vx_0\|^q,$ for all $\vu,$ and, in particular, $\phi(\vy_0) \geq \frac{1}{q}\|\vy_0 - \vx_0\|_q^q.$ On the other hand, by $(L, \kappa)$-weak smoothness of $f$ and using Lemma~\ref{lemma:p-weakly-smooth-approx}, we have that (below $M_0 = \big[\frac{2(q-\kappa)}{q\kappa \delta_0}\big]^{\frac{q-\kappa}{\kappa}}L^{\frac{q}{\kappa}}$):
$$
    f(\vy_0) - f(\vx_0) - \innp{\nabla f(\vx_0), \vy_0 - \vx_0} \leq \frac{M_0}{q}\|\vy_0 - \vx_0\|_q^q + \frac{\delta_0}{2}.
$$
%where $M_0 = \Big[\frac{2(q-\kappa)}{q\kappa \delta_0}\Big]^{\frac{q-\kappa}{\kappa}}L^{\frac{q}{\kappa}}$.

Therefore:
\begin{equation}\label{eq:init-gap}
    A_0 G_0 \leq \big(A_0 M_0 - m_0\big)\frac{\|\vy_0 - \vx_0\|^q}{q} + m_0 \phi(\vxb^{\star}) + \frac{A_0 \delta_0}{2} = m_0 \phi(\vxb^{\star})  + \frac{A_0 \delta_0}{2},
\end{equation}
as $m_0 = A_0M_0$. 
\qed\end{proof}

The next step is to bound $A_k G_k - A_{k-1}G_{k-1},$ as in the following lemma.

\begin{lemma}[Gap Evolution]\label{lemma:change-in-gap}
Given arbitrary $\delta_k > 0$ and  $M_k = \Big[\frac{2(q-\kappa)}{q\kappa \delta_k}\Big]^{\frac{q-\kappa}{\kappa}}L^{\frac{q}{\kappa}}$, if $\frac{{a_k}^q}{{A_k}^{q-1}}\leq \frac{\max\{\lambda A_{k-1}, m_0\}}{ M_k}$ then 
$$
    A_k G_k - A_{k-1} G_{k-1} \leq \frac{A_k \delta_k}{2}.
$$
\end{lemma}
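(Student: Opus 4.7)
The plan is to expand $A_k G_k - A_{k-1} G_{k-1}$ using the definitions of $U_k$ and $L_k$, where the minimum defining $L_k$ is attained at $\vv_k$ (by definition of the update). After telescoping, most of the $\psi$ and $\phi$ terms collapse, and I expect to be able to write
\[
A_k G_k - A_{k-1} G_{k-1} = \text{(A)} + \text{(B)},
\]
where (A) collects the terms involving $f$ and the iterates $\vy_k,\vy_{k-1},\vx_k$ (with the single $\langle\nabla f(\vx_k),\vv_k-\vx_k\rangle$ term from the minimum), and (B) collects $A_{k-1}(\psi(\vv_{k-1})-\psi(\vv_k)) + m_0(\phi(\vv_{k-1})-\phi(\vv_k)) - \sum_{i=0}^{k-1} a_i\langle\nabla f(\vx_i),\vv_k-\vv_{k-1}\rangle$.

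To bound (A), I would use the identities $A_k\vx_k = A_{k-1}\vy_{k-1}+a_k\vv_{k-1}$ and $A_k\vy_k = A_{k-1}\vy_{k-1}+a_k\vv_k$ (hence $\vy_k-\vx_k = \tfrac{a_k}{A_k}(\vv_k-\vv_{k-1})$) to rewrite $a_k\langle\nabla f(\vx_k),\vv_k-\vx_k\rangle = A_k\langle\nabla f(\vx_k),\vy_k-\vx_k\rangle + A_{k-1}\langle\nabla f(\vx_k),\vx_k-\vy_{k-1}\rangle$. After substituting, (A) becomes
\[
A_k\bigl[f(\vy_k)-f(\vx_k)-\langle\nabla f(\vx_k),\vy_k-\vx_k\rangle\bigr] - A_{k-1}\bigl[f(\vy_{k-1})-f(\vx_k)-\langle\nabla f(\vx_k),\vy_{k-1}-\vx_k\rangle\bigr].
\]
The second bracket is nonnegative by convexity of $f$, so it can be dropped. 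For the first bracket I invoke Lemma~\ref{lemma:p-weakly-smooth-approx} with $p=q$, using $\|\vy_k-\vx_k\|^q = (a_k/A_k)^q\|\vv_k-\vv_{k-1}\|^q$, to upper bound (A) by $\frac{M_k\, a_k^q}{q\, A_k^{q-1}}\|\vv_k-\vv_{k-1}\|^q + \tfrac{A_k\delta_k}{2}$.

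To bound (B), the key observation is that $\vv_{k-1}$ is the minimizer of
\[
F_{k-1}(\vu) = \littlesum_{i=0}^{k-1} a_i\innp{\nabla f(\vx_i),\vu-\vx_i} + A_{k-1}\psi(\vu) + m_0\phi(\vu).
\]
Since $\psi$ is $(\lambda,q)$-uniformly convex and $\phi$ is $(1,q)$-uniformly convex, $F_{k-1}$ is $(A_{k-1}\lambda+m_0,q)$-uniformly convex; combined with the optimality of $\vv_{k-1}$ on $\cx$, this yields
\[
F_{k-1}(\vv_k) - F_{k-1}(\vv_{k-1}) \geq \frac{A_{k-1}\lambda+m_0}{q}\|\vv_k-\vv_{k-1}\|^q.
\]
Rearranging this inequality gives (B) $\leq -\frac{A_{k-1}\lambda+m_0}{q}\|\vv_k-\vv_{k-1}\|^q$.

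Adding the two bounds:
\[
A_k G_k - A_{k-1} G_{k-1} \leq \frac{A_k\delta_k}{2} + \frac{1}{q}\!\left[\frac{M_k\,a_k^q}{A_k^{q-1}} - (A_{k-1}\lambda+m_0)\right]\|\vv_k-\vv_{k-1}\|^q.
\]
The bracket is nonpositive provided $\frac{a_k^q}{A_k^{q-1}} \leq \frac{A_{k-1}\lambda + m_0}{M_k}$, and this is implied by the hypothesis $\frac{a_k^q}{A_k^{q-1}} \leq \frac{\max\{\lambda A_{k-1},m_0\}}{M_k}$ since $A_{k-1}\lambda + m_0 \geq \max\{A_{k-1}\lambda,m_0\}$, finishing the proof. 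The main obstacle I anticipate is the bookkeeping in step (A): correctly decomposing the single cross-term $a_k\langle\nabla f(\vx_k),\vv_k-\vx_k\rangle$ into pieces that pair respectively with $A_k(\vy_k-\vx_k)$ and the ``convexity-slack'' $A_{k-1}$ piece that cancels against $f(\vy_{k-1})$.
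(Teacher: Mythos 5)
Your proposal is correct and follows essentially the same route as the paper's own proof: the paper also telescopes $A_kU_k - A_{k-1}U_{k-1}$ and $A_kL_k - A_{k-1}L_{k-1}$, uses the identity $\vy_k - \vx_k = \tfrac{a_k}{A_k}(\vv_k - \vv_{k-1})$ with Lemma~\ref{lemma:p-weakly-smooth-approx} on the $A_k$-weighted Bregman term, drops the $A_{k-1}$-weighted Bregman of $f$ by convexity, and exploits that $\vv_{k-1}$ minimizes $h_{k-1}$ together with uniform convexity of $\psi$ and $\phi$. The only cosmetic difference is that you combine the $\psi$ and $\phi$ contributions into the single modulus $A_{k-1}\lambda + m_0$, whereas the paper keeps $D_\psi$ and $D_\phi$ separate and records $\max\{\lambda A_{k-1}, m_0\}$; both rely on $\phi$ being $(1,q)$-uniformly convex (which holds for the paper's choices $\phi = \tfrac{1}{\lambda}D_\psi$ when $\lambda>0$, or $\phi$ chosen uniformly convex when $\lambda=0$), and since $A_{k-1}\lambda + m_0 \geq \max\{\lambda A_{k-1}, m_0\}$ your version subsumes the stated hypothesis.
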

\begin{proof}
To bound $A_k G_k - A_{k-1}G_{k-1}$, we first bound $A_k U_k - A_{k-1}U_{k-1}$ and $A_kL_k - A_{k-1}L_{k-1}.$ By definition of $U_k,$
\begin{equation}\label{eq:upper-bnd-change}
    \begin{aligned}
        A_k U_k - A_{k-1}U_{k-1} =\; & A_k f(\vy_k) - A_{k-1}f(\vy_{k-1}) + a_k \psi(\vv_k)\\
        =\; & A_k(f(\vy_k) - f(\vx_k)) + A_{k-1}(f(\vx_{k}) - f(\vy_{k-1}))\\
        &+ a_k f(\vx_k) + a_k \psi(\vv_k).
    \end{aligned}
\end{equation}
For the lower bound, define the function under the minimum in the definition of the lower bound as $h_k(\vu) := \sum_{i=0}^k a_i \innp{\nabla f(\vx_i), \vu - \vx_i} + A_k \psi(\vu) + m_0 \phi(\vu),$ so that we have:
\begin{equation}\label{eq:lb-change-1}
    A_k L_k - A_{k-1}L_{k-1} = a_k f(\vx_k) + h_k(\vv_k) - h_{k-1}(\vv_{k-1}). 
\end{equation}
Observe first that 
\begin{equation}\label{eq:h-k-1}
    h_k(\vv_k) - h_{k-1}(\vv_k) = a_k \innp{\nabla f(\vx_k), \vv_k - \vx_k} + a_k \psi(\vv_k).    
\end{equation}
On the other hand, using the definition of Bregman divergence and the fact that Bregman divergence is blind to constant and linear terms, we can bound $h_{k-1}(\vv_k) - h_{k-1}(\vv_{k-1})$ as
\begin{align*}
    h_{k-1}(\vv_k) - h_{k-1}(\vv_{k-1}) &= \innp{\nabla h_{k-1}(\vv_{k-1}), \vv_k - \vv_{k-1}} + D_{h_{k-1}}(\vv_k, \vv_{k-1})\\
    &\geq A_{k-1}D_{\psi}(\vv_k, \vv_{k-1}) + m_0 D_{\phi}(\vv_k, \vv_{k-1}),%\\
   % &\geq A_{k-1}D_{\psi}(\vv_k, \vv_{k-1}),
\end{align*}
where the second line is by $\vv_{k-1}$ being the minimizer of $h_{k-1}$.  
Combining with Eqs.~\eqref{eq:lb-change-1} and \eqref{eq:h-k-1}, we have:
\begin{equation}\label{eq:lb-change-final}
    \begin{aligned}
        A_k L_k - A_{k-1}L_{k-1} \geq\; & a_k f(\vx_k) + a_k \psi(\vv_k) + a_k \innp{\nabla f(\vx_k), \vv_k - \vx_k}\\
        &+ A_{k-1}D_{\psi}(\vv_k, \vv_{k-1}) \markupdelete{-}\markupadd{+} m_0 D_{\phi}(\vv_k, \vv_{k-1}).
    \end{aligned}
\end{equation}

Combining Eqs.~\eqref{eq:upper-bnd-change} and \eqref{eq:lb-change-final}, we can now bound $A_k G_k - A_{k-1}G_{k-1}$ as
\begin{align*}
    A_k G_k - A_{k-1}G_{k-1} \leq &\; A_k(f(\vy_k) - f(\vx_k)) + A_{k-1}(f(\vx_{k}) - f(\vy_{k-1})) \\
    &- a_k \innp{\nabla f(\vx_k), \vv_k - \vx_k}\\
    &- A_{k-1}D_{\psi}(\vv_k, \vv_{k-1}) - m_0 D_{\phi}(\vv_k, \vv_{k-1})\\
    \leq&\; A_k(f(\vy_k) - f(\vx_k) - \innp{\nabla f(\vx_k), \vy_k - \vx_k})\\
    &- A_{k-1}D_{\psi}(\vv_k, \vv_{k-1}) - m_0 D_{\phi}(\vv_k, \vv_{k-1}),
\end{align*}
where we have used $f(\vx_{k}) - f(\vy_{k-1}) \leq \innp{\nabla f(\vx_k), \vx_k - \vy_{k-1}}$ (by convexity of $f$) and the definition of $\vy_k$ from Eq.~\eqref{eq:mod-agd+}. Similarly as for the initial gap, we now use the weak smoothness of $f$ and Lemma~\ref{lemma:p-weakly-smooth-approx} to write:
\begin{align*}
    f(\vy_k) - f(\vx_k) - \innp{\nabla f(\vx_k), \vy_k - \vx_k} &\leq \frac{M_k}{q}\|\vy_k - \vx_k\|^q + \frac{\delta_k}{2}\\
    &= \frac{M_k}{q}\frac{{a_k}^q}{{A_k}^q}\|\vv_k - \vv_{k-1}\|^q + \frac{\delta_k}{2},
\end{align*}
where $M_k = \Big[\frac{2(q-\kappa)}{q\kappa \delta_k}\Big]^{\frac{q-\kappa}{\kappa}}L^{\frac{q}{\kappa}}$ and the equality is by $\vy_k - \vx_k = \frac{a_k}{A_k}(\vv_k - \vv_{k-1})$, which follows by the definition of algorithm steps from Eq.~\eqref{eq:mod-agd+}.

On the other hand, as $\psi$ is $(\lambda, q)$-uniformly convex, %with modulus $\lambda,$ 
we have that $$D_{\psi}(\vv_k, \vv_{k-1}) \geq \frac{\lambda}{q}\|\vv_k - \vv_{k-1}\|^q.$$ 
Further, if $\lambda = 0,$ we have that $D_{\phi}(\vv_k, \vv_{k-1}) \geq \frac{1}{q}\|\vv_k - \vv_{k-1}\|^q$. Thus:
\begin{align*}
    A_k G_k - A_{k-1}G_{k-1} &\leq \Big(M_k\frac{{a_k}^q}{{A_k}^{q-1}} - \max\{\lambda A_{k-1}, m_0\}\Big)\frac{\|\vv_k - \vv_{k-1}\|^q}{q} + \frac{A_k \delta_k}{2}\\
    &\leq \frac{A_k \delta_k}{2},
\end{align*}
as $\frac{{a_k}^q}{{A_k}^{q-1}}\leq \frac{\max\{\lambda A_{k-1}, m_0\}}{ M_k}.$ 
\qed\end{proof}

We are now ready to state and prove the main result from this section. 
%
%\todo[inline]{JD: We need to discuss what happens when $\lambda > L$, as then the bounds don't necessarily make sense. The catch is in the condition on step sizes $a_k,$ as, by definition, it has to be $A_k \geq a_k$. Not sure how to address this without over-complicating the discussion. Any ideas?\\
%CG: It seems that the condition $a_k\leq A_k$ corresponds to the $k$ in the statement to be $\geq 1$ (ignoring the log). If that's the case then there's no problem really (if the ineq is not satisfied, then ALGO makes a $\tilde O(1)$ steps and solves the problem). If we remove the $\Omega(\cdot)$ in the ''standard arguments'' part it should be clearer.}
%
\begin{theorem}\label{thm:comp-opt-gap-conv}
Let $\Bar{f}(\vx) = f(\vx) + \psi(\vx),$ where $f$ is convex and $(L, \kappa)$-weakly smooth w.r.t.~a norm $\|\cdot\|$, $\kappa \in (1, 2],$ and $\psi$ is $q$-uniformly convex with constant $\lambda \geq 0$ w.r.t.~the same norm for some $q\geq 2$. Let $\vxb^{\star}$ be the minimizer of $\Bar{f}.$ Let $\vx_k, \vv_k, \vy_k$ evolve according to Eq.~\eqref{eq:mod-agd+} for an arbitrary initial point $\vx_0 \in \cx,$ where $A_0M_0 = m_0$, ${{a_k}^q}\leq \frac{\max\{\lambda {A_{k-1}}^q, m_0 {A_k}^{q-1}\}}{ M_k}$ for $k \geq 1,$ and $M_k = \Big[\frac{2(q-\kappa)}{q\kappa \delta_k}\Big]^{\frac{q-\kappa}{\kappa}}L^{\frac{q}{\kappa}}$,  for $\delta_k > 0$ and $k\geq 0.$ Then, $\forall k \geq 1:$
$$
    \Bar{f}(\vy_k) - \Bar{f}(\vxb^{\star}) \leq \frac{2A_0M_0 \phi(\vxb^{\star}) + \sum_{i=0}^k A_i \delta_i}{2 A_k}.
$$
In particular, for any $\epsilon > 0,$ setting $\delta_k = \frac{a_k}{A_k}\epsilon$, for $k\geq 0,$  $a_0 = A_0= 1,$ 
and ${{a_k}^q} = \frac{\max\{\lambda {A_{k-1}}^q, m_0 {A_k}^{q-1}\}}{ M_k}$
for $k \geq 1,$ we have that $\Bar{f}(\vy_k) - \Bar{f}(\vxb^{\star}) \leq \epsilon$ after at most
\begin{align*}
    k = O\bigg(\min\bigg\{ &\Big(\frac{1}{\epsilon }\Big)^{\frac{q-\kappa}{q\kappa - q + \kappa}}\Big(\max\Big\{\frac{L^{\frac{q}{\kappa}}}{ \lambda } , 1\Big\}\Big)^{\frac{\kappa}{q\kappa - q + \kappa}}\log\Big(\frac{L \phi(\vxb^{\star})}{\epsilon}\Big),\\%\;  
    &\Big(\frac{L}{\epsilon}\Big)^{\frac{q}{q\kappa - q + \kappa}}\big(\phi(\vxb^{\star})\big)^{\frac{\kappa}{q\kappa -q + \kappa}} \bigg\}\bigg)
\end{align*}
iterations.
\end{theorem}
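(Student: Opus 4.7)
The plan is to combine Lemmas~\ref{lemma:init-gap} and~\ref{lemma:change-in-gap} via the telescoping argument of the ADGT, and then analyze the growth rate of $A_k$ under the stated recursion. First, observe that the theorem's hypothesis ${a_k}^q \le \max\{\lambda {A_{k-1}}^q,\, m_0 {A_k}^{q-1}\}/M_k$ is precisely the condition of Lemma~\ref{lemma:change-in-gap} (after dividing by ${A_k}^{q-1}$, using $a_k \le A_k$). Telescoping the per-step bound $A_k G_k - A_{k-1} G_{k-1} \le A_k \delta_k / 2$ with the initial bound $A_0 G_0 \le m_0\phi(\vxb^*) + A_0 \delta_0 / 2$ and using $m_0 = A_0 M_0$ yields
\[
    A_k G_k \;\le\; A_0 M_0\, \phi(\vxb^*) \;+\; \tfrac{1}{2}\sum_{i=0}^k A_i\delta_i.
\]
Dividing by $A_k$ and invoking $G_k \ge \bar{f}(\vy_k) - \bar{f}(\vxb^*)$ gives the first displayed bound.

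For the iteration complexity, the choice $\delta_i = (a_i/A_i)\epsilon$ makes the error term telescope: $\sum_{i=0}^k A_i \delta_i = \epsilon\sum_{i=0}^k a_i = \epsilon A_k$, so it suffices to find the smallest $k$ with $A_k \ge 2 A_0 M_0 \phi(\vxb^*)/\epsilon$. Note $M_0 = \Theta(L^{q/\kappa}\epsilon^{-(q-\kappa)/\kappa})$ (since $\delta_0 = \epsilon$), and in general $M_k = \Theta\bigl(L^{q/\kappa}(A_k/(a_k\epsilon))^{(q-\kappa)/\kappa}\bigr)$, which couples $M_k$ and $a_k$ self-consistently through the recursion.

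The growth of $A_k$ splits into two regimes. In the \emph{geometric regime} (when $\lambda {A_{k-1}}^q$ attains the max), writing $\rho := a_k/A_{k-1}$ and plugging $\delta_k = \rho\epsilon/(1+\rho)$ into the fixed-point equation $\rho^q M_k = \lambda$ gives a constant growth rate $\rho = \Theta\!\bigl((\lambda/L^{q/\kappa})^{\kappa/(q\kappa - q+\kappa)} \epsilon^{(q-\kappa)/(q\kappa-q+\kappa)}\bigr)$ (up to constants depending on $q,\kappa$); then $A_k \ge (1+\rho)^k$ gives a logarithmic dependence $k = O(\rho^{-1}\log(L\phi(\vxb^*)/\epsilon))$, matching the first branch of the $\min$. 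In the \emph{polynomial regime} (when $m_0 {A_k}^{q-1}$ attains the max), the ansatz $A_k = \Theta(k^\alpha)$ produces $a_k/A_k = \Theta(1/k)$ and $\delta_k = \Theta(\epsilon/k)$, so $M_k = \Theta(L^{q/\kappa}(k/\epsilon)^{(q-\kappa)/\kappa})$; substituting into ${a_k}^q = m_0 {A_k}^{q-1}/M_k$ and matching exponents of $k$ on both sides pins down $\alpha = (q\kappa - q + \kappa)/\kappa$, and inverting to solve for $k$ in terms of $A_k \gtrsim M_0 \phi(\vxb^*)/\epsilon$ gives the second branch. Taking the minimum over the two bounds (in whichever regime is more favorable) gives the final complexity.

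The main technical obstacle is Step 3: both regimes are self-consistent, because $M_k$ depends on $\delta_k$, which depends on $a_k/A_k$, which in turn is defined through $M_k$. Resolving this requires carefully solving for the equilibrium growth rate (a transcendental fixed point in the geometric regime, a power-law ansatz in the polynomial regime), and most of the work is algebraic bookkeeping of exponents to surface $(q\kappa - q + \kappa)/\kappa$ as the controlling scale. A minor nuance is that strictly speaking one should formalize ``$A_k$ grows at least as fast as the ansatz,'' which can be done by a simple induction comparing the recursion to a continuous surrogate; once these growth rates are established, plugging into $A_k \ge 2 A_0 M_0 \phi(\vxb^*)/\epsilon$ and simplifying the resulting exponents is routine.
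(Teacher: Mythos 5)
Your proposal follows the paper's own argument essentially step for step: combine Lemma~\ref{lemma:init-gap} and Lemma~\ref{lemma:change-in-gap} by telescoping, use $\delta_i = (a_i/A_i)\epsilon$ so the error sum collapses to $\epsilon A_k$, reduce to the condition $A_k \geq 2A_0 M_0 \phi(\vxb^*)/\epsilon$, and then split the analysis of $A_k$'s growth into the geometric regime (bounding $a_k/A_{k-1}$ from below by a constant, giving the first branch of the $\min$) and the polynomial regime (where an exponent-matching argument pins $A_k = \Omega(k^{(q\kappa - q + \kappa)/\kappa})$, giving the second branch). The only thing worth noting is your explicit check that the theorem-statement condition $a_k^q \leq \max\{\lambda A_{k-1}^q, m_0 A_k^{q-1}\}/M_k$ implies the hypothesis $a_k^q/A_k^{q-1} \leq \max\{\lambda A_{k-1}, m_0\}/M_k$ of Lemma~\ref{lemma:change-in-gap}; the paper elides this, and your observation that it follows from $A_{k-1} \leq A_k$ is correct and closes a small gap in the paper's exposition.
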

\begin{proof}
The first part of the theorem follows immediately by combining Lemma~\ref{lemma:init-gap} and Lemma~\ref{lemma:change-in-gap}. 

For the second part, we have 
$$
    \Bar{f}(\vy_k) - \Bar{f}(\vxb^{\star}) \leq \frac{A_0 M_0 \phi(\vxb^{\star})}{A_k} + \frac{\epsilon}{2},
$$
so all we need to show is that, under the step size choice from the theorem statement, we have $\frac{A_0 M_0 \phi(\vxb^{\star})}{A_k} \leq \frac{\epsilon}{2}.$ 

As $A_0 = a_0 = 1,$ we have that $\delta_0 = \epsilon$ and 
\begin{equation}\label{eq:M_0}
    M_0 = \Big[\frac{2(q-\kappa)}{q\kappa \epsilon}\Big]^{\frac{q-\kappa}{\kappa}}L^{\frac{q}{\kappa}}.
\end{equation}
It remains to bound the growth of $A_k.$ In this case, by theorem assumption, we have ${{a_k}^q} = \frac{\max\{\lambda {A_{k-1}}^q, m_0 {A_k}^{q-1}\}}{ M_k}$. Thus, (i) $\frac{{a_k}^q}{{A_{k-1}}^{q}} \geq  \frac{\lambda}{ M_k}$ and (ii) $\frac{{a_k}^q}{{A_k}^{q-1}} \geq \frac{ m_0}{ M_k}$, and the growth of $A_k$ can be bounded below as the maximum of growths determined by these two cases.

Consider $\frac{{a_k}^q}{{A_{k-1}}^{q}} \geq  \frac{{\lambda}}{ M_k}$ first. 
As $\delta_k = \frac{a_k}{A_k}\epsilon$ and  $M_k = \Big[\frac{2(q-\kappa)}{q\kappa \delta_k}\Big]^{\frac{q-\kappa}{\kappa}}L^{\frac{q}{\kappa}}$, the condition $\frac{{a_k}^q}{{A_k}^{q-1}A_{k-1}}\geq \frac{\lambda}{ M_k}$ can be equivalently written as: %\textcolor{red}{Below $+1$ goes in the exponent?}
$$  
    \frac{{a_k}^{q - \frac{q}{\kappa}+1}}{{A_{k-1}}^{q - \frac{q}{\kappa} + 1}} \geq \Big[\frac{2(q-\kappa)}{q\kappa \epsilon }\Big]^{-\frac{q-\kappa}{\kappa}}\frac{\lambda}{L^{\frac{q}{\kappa}}}.
$$
%As $A_{k-1} \leq A_k,$ the growth is at least as fast as determined by:
%$$  
%    \Big(\frac{{a_k}}{A_{k-1}}\Big)^{\frac{q\kappa - q + \kappa}{\kappa}} = \Big[\frac{2(q-\kappa)}{q\kappa \epsilon }\Big]^{-\frac{q-\kappa}{\kappa}}\frac{\lambda}{L^{\frac{q}{\kappa}}},
%$$
%or, equivalently
Hence,
$$
    \frac{{a_k}}{A_{k-1}} \geq \Big[\frac{2(q-\kappa)}{q\kappa \epsilon }\Big]^{-\frac{q-\kappa}{q\kappa - q + \kappa}}\Big(\frac{\lambda}{ L^{\frac{q}{\kappa}}} \Big)^{\frac{\kappa}{q\kappa - q + \kappa}}. 
$$
As $a_k = A_k - A_{k-1},$ it follows that $\frac{A_k}{A_{k-1}} \geq 1 + \Big[\frac{2(q-\kappa)}{q\kappa \epsilon }\Big]^{-\frac{q-\kappa}{q\kappa - q + \kappa}}\Big(\frac{\lambda}{ L^{\frac{q}{\kappa}}} \Big)^{\frac{\kappa}{q\kappa - q + \kappa}},$ further leading to
\begin{align*}
    A_k \geq \bigg(1 + \Big[\frac{2(q-\kappa)}{q\kappa \epsilon }\Big]^{-\frac{q-\kappa}{q\kappa - q + \kappa}}\Big(\frac{\lambda}{ L^{\frac{q}{\kappa}}} \Big)^{\frac{\kappa}{q\kappa - q + \kappa}}\bigg)^k. 
\end{align*}
%Standard arguments then lead to:
%\todo[inline]{CG: Being picky here again. The fact that $\Big[\frac{2(q-\kappa)}{q\kappa  }\Big]^{-\frac{q-\kappa}{q\kappa - q + \kappa}}$ goes in the exponent, shouldn't affect the constant in the exponent at the rhs of \eqref{eq:comp-min-Ak-1}. I don't think that matters, but technically there's also a $\Theta(\cdot)$ on the exponent.\\
%JD: I'm not sure I understand. $\Big[\frac{2(q-\kappa)}{q\kappa  }\Big]^{\frac{q-\kappa}{q\kappa - q + \kappa}}$ is always bounded above by a constant, given that $\kappa \in (1, 2]$ and $q \geq 2.$ I think it should also be bounded below by a const, but if not, we can change $\Theta$ to $\Omega$ and then it should be fine.\\
%CG: Right, but isn't then $\exp\{\Theta(\cdot)\}$, rather than $\Theta(\exp\{\cdot\})$?}
%\begin{equation}\label{eq:comp-min-Ak-1}
%    A_k = \Omega\bigg( \exp\bigg( \Big[\frac{2(q-\kappa)}{q\kappa \epsilon }\Big]^{-\frac{q-\kappa}{q\kappa - q + \kappa}}\Big(\frac{\lambda}{ L^{\frac{q}{\kappa}}} \Big)^{\frac{\kappa}{q\kappa + q - \kappa}} k\bigg) \bigg) =  \exp\bigg(\Omega\bigg( \Big[\frac{1}{\epsilon }\Big]^{-\frac{q-\kappa}{q\kappa - q + \kappa}}\Big(\frac{\lambda}{ L^{\frac{q}{\kappa}}} \Big)^{\frac{\kappa}{q\kappa - q + \kappa}} k\bigg) \bigg).
%\end{equation}
%\todo[inline]{Now I understand why you (correctly) get $\Omega(\exp(\cdot))$. We can keep it like that, but that Omega is actually $A_0=1$, right?}

On the other hand, the condition $\frac{{a_k}^q}{{A_k}^{q-1}} \geq \frac{m_0}{ M_k}$ can be equivalently written as:
$$
    \frac{{a_k}^{\frac{q\kappa - q}{\kappa}+1}}{A_k^{\frac{q\kappa - q}{\kappa}}} \geq \frac{m_0}{L^{\frac{q}{\kappa}}} \Big[\frac{q\kappa \epsilon}{2(q-\kappa)}\Big]^{\frac{q-\kappa}{\kappa}} = 1,
$$
where we have used the definition of $m_0,$ which implies
\begin{equation}\label{eq:comp-min-Ak-2}
    A_k 
    = \Omega\Big(k^{\frac{q\kappa - q + \kappa}{\kappa}}\Big),
\end{equation}
and further leads to the claimed bound on the number of iterations.
\qed\end{proof}

Let us point out some special cases of the bound from Theorem~\ref{thm:comp-opt-gap-conv}. When $f$ is smooth ($\kappa = 2$) and $\psi$ is $q$-uniformly convex, assuming $L^{q/2}\geq \lambda,$ the bound simplifies to 
\begin{equation}\label{eq:k-smooth+unif-cvx}
    k = O\bigg(\min\bigg\{\Big(\frac{1}{\epsilon }\Big)^{\frac{q-2}{q + 2}}\Big(\frac{L^{\frac{q}{2}}}{ \lambda } \Big)^{\frac{2}{q + 2}}\log\Big(\frac{L \phi(\vxb^{\star})}{\epsilon}\Big), \;  \Big(\frac{L}{\epsilon}\Big)^{\frac{q}{q+2}}\big(\phi(\vxb^{\star})\big)^{\frac{2}{q+2}}\bigg\}\bigg).
\end{equation}
In particular, if $\psi$ is strongly convex ($q=2$), we recover the same bound as in the Euclidean case:
\begin{equation}\label{eq:k-smooth+str-cvx}
    k = O\bigg(\min\bigg\{\sqrt{\frac{L}{ \lambda }}\log\Big(\frac{L \phi(\vxb^{\star})}{\epsilon}\Big),\; \sqrt{\frac{L\phi(\vxb^{\star})}{\epsilon}}\bigg\} \bigg).
\end{equation}
Note that this result uses smoothness of $f$ and strong convexity of $\psi$ with respect to the same but arbitrary norm $\|\cdot\|$. Because we do not require the same function to be simultaneously smooth and strongly convex w.r.t.~$\|\cdot\|$, the resulting ``condition number''  $\frac{L}{ \lambda }$ can be dimension-independent even for non-Euclidean norms (in particular, this will be possible for any $\ell_p$ norm with $p \in (1, 2]$). Observe further that it is generally possible for the ``condition number''  $\frac{L}{ \lambda }$ to be smaller than one. In that case, as the convergence bound in Theorem~\ref{thm:comp-opt-gap-conv} depends on $\max\{\frac{L}{ \lambda }, 1\}$, the bound becomes independent of the condition number (although it still depends on $L$).
%Another consideration is that for $p > 2$ there do not exist any strongly convex functions $\psi$ that simultaneously have a constant strong convexity parameter w.r.t.~$\|\cdot\|_p$ and whose Bregman divergence satisfies $D_{\psi}(\vy, \vx) \leq C \|\vy - \vx\|_p^2$ for a constant $C$ that does not grow polynomially with $d$ when $p$ is not trivially close to 2. This means that even if the first issue is resolved (by, e.g., devising an efficient algorithm for the sum of two functions that are smooth w.r.t.~$\|\cdot\|_p$ and strongly convex w.r.t.~$\|\cdot\|_p$, respectively) 

Because $\Bar{f}$ is $q$-uniformly convex, Theorem~\ref{thm:comp-opt-gap-conv} also implies a bound on $\|\vy_k - \vxb^{\star}\|$ whenever $\lambda > 0,$ as follows.
\begin{corollary}\label{cor:comp-opt-dist-to-opt}
Under the same assumptions as in Theorem~\ref{thm:comp-opt-gap-conv}, and assuming, in addition, that $\lambda > 0,$ we have that $\|\vy_k - \vxb^{\star}\|\leq \bar{\epsilon}$ after at most
$$
    k = O\bigg( \Big(\frac{q}{\lambda\bar{\epsilon}^q }\Big)^{\frac{q-\kappa}{q\kappa - q + \kappa}}\Big(\frac{L^{\frac{q}{\kappa}}}{ \lambda } \Big)^{\frac{\kappa}{q\kappa - q + \kappa}}\log\Big(\frac{q L \phi(\vxb^{\star})}{\bar{\epsilon}^q\lambda}\Big)\bigg)
$$
iterations.
\end{corollary}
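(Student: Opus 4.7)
The plan is to reduce the distance-to-optimum guarantee to the optimality-gap guarantee of Theorem~\ref{thm:comp-opt-gap-conv} via a standard growth inequality. Since $f$ is convex and $\psi$ is $(\lambda,q)$-uniformly convex, the sum $\bar f = f + \psi$ is itself $(\lambda, q)$-uniformly convex with respect to $\|\cdot\|$. First-order optimality at $\vxb^*$ (i.e.\ $\zeros \in \partial \bar f(\vxb^*)$) combined with the subgradient inequality recorded right after Definition~\ref{def:unif-cvx} then yields
$$
\bar f(\vy_k) - \bar f(\vxb^*) \;\geq\; \frac{\lambda}{q}\|\vy_k - \vxb^*\|^q.
$$
Consequently, to ensure $\|\vy_k - \vxb^*\| \leq \bar\epsilon$ it suffices to drive the optimality gap below $\epsilon := \tfrac{\lambda \bar\epsilon^q}{q}$.

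Next, I would invoke Theorem~\ref{thm:comp-opt-gap-conv} with this choice of $\epsilon$. Because $\lambda > 0$, the logarithmic branch of the $\min$ in the iteration complexity is available: we read off the bound
$$
    k \;=\; O\!\bigg(\Big(\tfrac{1}{\epsilon}\Big)^{\frac{q-\kappa}{q\kappa - q + \kappa}}\Big(\max\big\{\tfrac{L^{q/\kappa}}{\lambda},1\big\}\Big)^{\frac{\kappa}{q\kappa - q + \kappa}}\log\Big(\tfrac{L\phi(\vxb^*)}{\epsilon}\Big)\bigg).
$$
Substituting $\epsilon = \lambda\bar\epsilon^q/q$ produces the stated $\big(\tfrac{q}{\lambda\bar\epsilon^q}\big)^{\frac{q-\kappa}{q\kappa - q + \kappa}}$ factor and the logarithm $\log\!\big(\tfrac{qL\phi(\vxb^*)}{\bar\epsilon^q\lambda}\big)$.

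The only small bookkeeping point is the $\max\{L^{q/\kappa}/\lambda, 1\}$ in Theorem~\ref{thm:comp-opt-gap-conv}: I would simply note that the interesting regime (and the one covered by the corollary's statement) is $L^{q/\kappa} \geq \lambda$, since otherwise the problem is already trivially well-conditioned and the bound holds with the constant term dominating. No step is a genuine obstacle here; the only thing to watch is that one must take the \emph{first} branch of the $\min$ in Theorem~\ref{thm:comp-opt-gap-conv} (which exploits $\lambda > 0$), rather than the second branch that does not benefit from uniform convexity. With that choice, algebraic substitution yields exactly the iteration complexity claimed in the corollary.
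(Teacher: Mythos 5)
Your proposal is correct and follows essentially the same route as the paper: reduce the distance bound to the optimality gap via $q$-uniform convexity of $\bar f$ and first-order optimality at $\vxb^*$, then invoke Theorem~\ref{thm:comp-opt-gap-conv} with accuracy parameter $\epsilon = \lambda\bar\epsilon^q/q$ and read off the first (logarithmic) branch of the $\min$. Your remark about the $\max\{L^{q/\kappa}/\lambda,1\}$ factor is a fair bookkeeping observation that the paper simply leaves implicit.
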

\begin{proof}
By $q$-uniform convexity of $\Bar{f}$ and $\zeros \in \partial f(\vxb^{\star})$ (as $\vxb^{\star}$ minimizes $\Bar{f}$), we have
$$
    \|\vy_k - \vxb^{\star}\|^q \leq \frac{q}{\lambda}(\bar{f}(\vy_k) - \Bar{f}(\vxb^{\star})).
$$
Thus, it suffices to apply the bound from Theorem~\ref{thm:comp-opt-gap-conv} with the accuracy parameter ${\epsilon} = \frac{\lambda\bar{\epsilon}^q}{q}.$
\qed\end{proof}

%%%%%%%%%%%%%%%%%%%%%
\subsection{Computational Considerations}\label{sec:comp-considerations}

At a first glance, the result from Theorem~\ref{thm:comp-opt-gap-conv} may seem of limited applicability, as there are potentially four different parameters ($L, \kappa, \lambda, q$) that one would need to tune. However, we now argue that this is not a constraining factor. First, for most of the applications in which one would be interested in using this framework, function $\psi$ is a regularizing function with known uniform convexity parameters $\lambda$ and $q$ (see Section~\ref{sec:apps} for several illustrative examples). Second, the knowledge of parameters $L$ and $\kappa$ is not necessary for our results; we presented the analysis assuming the knowledge of these parameters to not over-complicate the exposition.

In particular, the only place in the analysis where the $(L, \kappa)$ smoothness of $f$ is used is in the inequality
\begin{equation}\label{eq:M_k-condition}
    f(\vy_k) \leq f(\vx_k) + \innp{\nabla f(\vx_k), \vy_k - \vx_k} + \frac{M_k}{q}\|\vy_k - \vx_k\|^q + \frac{\delta_k}{2}.
\end{equation}
But instead of explicitly computing the value of $M_k$ based on $L, \kappa,$ one could maintain an estimate of $M_k$, double it whenever the inequality from Eq.~\eqref{eq:M_k-condition} is not satisfied, and recompute all iteration-$k$ variables. This is a standard {\em line-search} trick employed in optimization (see, e.g.,~\cite{nesterov2015universal}). Observe that, due to $(L, \kappa)$-weak smoothness of $f$ and Lemma~\ref{lemma:p-weakly-smooth-approx}, there exists a sufficiently large $M_k$ for any value of $\delta_k$. In particular, under the choice $\delta_k = \frac{a_k}{A_k}\epsilon$ from Theorem~\ref{thm:grad-norm}, the total number of times that $M_k$ can get doubled is logarithmic in all of the problem parameters, which means that it can be absorbed in the overall convergence bound from Theorem~\ref{thm:comp-opt-gap-conv}. 

Finally, the described algorithm (Generalized AGD+ from Eq.~\eqref{eq:mod-agd+}) can be efficiently implemented only if the minimization problems defining $\vv_k$ can be solved efficiently (preferably in closed form, or with $\Tilde{O}(d)$ arithmetic operations). This is indeed the case for most problems of interest. In particular, when $\psi$ is uniformly convex, we will typically take $\phi(\vu)$ to be the Bregman divergence $D_{\psi}(\vu, \vx_0).$ Then, the computation of $\vv_k$ boils down to solving problems of the form \eqref{eq:psi-simple}, i.e., $\min_{\vu \in \cx}\{\innp{\vz, \vx} + \psi(\vx)\},$ for a given $\vz.$ Such problems are efficiently solvable whenever the convex conjugate of $\psi + I_{\cx}$, where $I_{\cx}$ is the indicator function of the closed convex set $\cx,$ is efficiently computable, in which case the minimizer is $\nabla(\psi + I_{\cx})^*(\vz)$. In particular, for $\cx = \bE$ and $\psi(\vx) = \frac{1}{q}\|\cdot\|^q$, $q > 1,$ (a common choice for our applications of interest; see Section~\ref{sec:apps}), the minimizer is computable in closed form as $\nabla \big(\frac{1}{q_{\ast}}\|\vz\|_{*}^{q_{\ast}}\big),$ where $q_{\ast} = \frac{q}{q-1}$ is the exponent dual to $q.$ 
This should be compared to the computation of proximal maps needed in~\cite{nesterov2013gradient, allen2017katyusha}, where the minimizer would be the gradient of the infimal convolution of $\psi$ and the \markupdelete{Euclidean} \markupadd{squared} norm \markupdelete{squared}, for which there are much fewer efficiently computable examples. Note that such an assumption would be sufficient for our algorithm to work in the Euclidean case (by taking $\phi(\vu) = \frac{1}{2}\|\vu - \vx_0\|_2^2$); however, it is not necessary.

%\todo[inline]{JD: Need to add a discussion about computing $\vv_k$ in the algorithm.}

%%%%%%%%%%%%%%%%%%%%%%%%%%%%%%%%%
\section{Minimizing the Gradient Norm in $\ell_p$ and $\mathrm{Sch}_p$ Spaces}\label{sec:small-grad-norm}

We now show how to use the result from Theorem~\ref{thm:comp-opt-gap-conv} to obtain near-optimal convergence bounds for minimizing the norm of the gradient. In particular, assuming that $f$ is $(L, \kappa)$-weakly smooth w.r.t.~$\|\cdot\|_p,$ to obtain the desired results, we apply Theorem~\ref{thm:comp-opt-gap-conv} to function $\Bar{f}(\cdot) = f(\cdot) + \lambda \psi_p(\cdot),$ where 
\begin{equation}\label{eq:psi_p}
    \psi_p(\vx)  = \begin{cases}
                        \frac{1}{2(p-1)}\|\vx-\vx_0\|_p^2, &\text{ if } p \in (1, 2],\\
                        %\frac{\min\{\frac{1}{p-1}, 2\log(d)\}}{2}\|\vx - \vx_0\|_p^2, &\text{ if } p \in (1, 2],\\
                        \frac{1}{p} \|\vx - \vx_0\|_p^p, &\text{ if } p \in(2,+\infty).
                    \end{cases}
\end{equation}
Function $\psi_p$ is then $(1, \max\{2, p\})$-uniformly convex.  The proof of strong convexity of $\psi_p$ when $1<p\leq 2$ can be found, e.g., in \cite[Example 5.28]{Beck:2017}. For $2<p<+\infty$, $\psi_p$ is a separable function, hence its $p$-uniform convexity can be proved from the duality between uniform convexity and uniform smoothness \cite{Zalinescu:1983} and direct computation.  
These $\ell_p$ results also have spectral analogues, given by the Schatten spaces $\Schatten_p=(\rr^{d\times d},\|\cdot\|_{\Schatten,p})$. Here, the functions below can be proved to be $(1,\max\{2,p\})$-uniformly convex, which is a consequence of sharp estimates of uniform convexity for Schatten spaces \cite{Ball:1994, juditsky2008large}
\begin{equation}\label{eq:psi_p-Schatten}
    \Psi_{\Schatten,p}(\vx)  = \begin{cases}
                        \frac{1}{2(p-1)}\|\vx-\vx_0\|_{\Schatten,p}^2, &\text{ if } p \in (1, 2],\\
                        %\frac{\min\{\frac{1}{p-1}, 2\log(d)\}}{2}\|\vx - \vx_0\|_p^2, &\text{ if } p \in (1, 2],\\
                        \frac{1}{p} \|\vx - \vx_0\|_{\Schatten,p}^p, &\text{ if } p \in(2,+\infty).
                    \end{cases}
\end{equation}
Finally, both for $\ell_1$ and $\Schatten_1$ spaces, our algorithms can work on the equivalent norm with power $p=\ln d/(\ln d-1).$ The cost of this change of norm is at most logarithmic in $d$ for the diameter and strong convexity constants. Similarly, our results also extend to the case $p=\infty$, by similar considerations (here, using exponent $p=\ln d$). %We will see however that the benefits of acceleration are completely lost in the $\ell_{\infty}$ case, and they effectively work as in the noncomposite smooth setting. 
%\todo[inline]{CG: Check if you like the additions here.\\
%JD: Yes. I just changed ``this'' to $\ell_{\infty}$ in the last sentence, because it was not clear if it was referring only to $\ell_{\infty}$ or both $\ell_{\infty}$ and $\ell_1.$\\
%CG: Looks good!}
%Here, we utilize the fact that $\frac{1}{2}\|\vx - \vx_0\|_p^2$ is $\max\{{p-1}, \frac{1}{2\log(d)}\}$-strongly convex when $p \in (1, 2]$, while $\frac{1}{p}\|\vx - \vx_0\|_p^p$ is $p$-uniformly convex with modulus $1$ for $p > 2$ \textcolor{red}{[ADD APPROPRIATE CITATION(S)]}.

To obtain the results for the norm of the gradient in $\ell_p$ spaces, we can apply Theorem~\ref{thm:comp-opt-gap-conv} with $\phi(\vx) = \psi_p(\vx),$ where $\psi_p$ is specified in Eq.~\eqref{eq:psi_p}. The result is summarized in the following theorem. The same result can be obtained for $\Schatten_p$ spaces, by following the same argument as in Theorem~\ref{thm:grad-norm} below, which we omit for brevity.

\begin{theorem}\label{thm:grad-norm}
Let $f$ be a convex, $(L, \kappa)$- weakly smooth function w.r.t.~a norm $\|\cdot\|_p$, where $p \in (1, \infty).$ Then, for any $\epsilon > 0,$ Generalized AGD+ from Eq.~\eqref{eq:mod-agd+}, initialized at some point $\vx_0 \in \rr^d$ and applied to $\bar{f} = f + \lambda\psi_p,$ where  $\psi_p$ is specified in Eq.~\eqref{eq:psi_p}, 
$$
     \lambda = 
    \begin{cases}
        \frac{\epsilon(p-1)}{2\|\vx^{\star} - \vx_0\|_p}, &\text{ if } p \in (1, 2], \\
        \frac{\epsilon}{2\|\vx^{\star} - \vx_0\|_p^{p-1}}, &\text{ if } p \in (2, \infty), 
    \end{cases}
$$
and with the choice $\phi = \psi_p,$ constructs a point $\vy_k$ with $\|\nabla f(\vy_k)\|_{p_{\ast}} \leq \epsilon$ in at most
\begin{align*}
    & k = \\
    &\begin{cases}
    O\bigg(\Big(\frac{2L}{\epsilon}\Big)^{\frac{\kappa}{(\kappa - 1)(3\kappa - 2)}}\Big(\frac{\kappa^{2\kappa}}{(\kappa - 1)^{2\kappa}} \frac{\|\vx^{\star} - \vx_0\|_p^2}{(p-1)^{{\kappa}}}\Big)^{\frac{1}{3\kappa - 2}}\log\Big(\frac{L\|\vx^{\star} - \vx_0\|_p}{(p-1)\epsilon}\Big)\bigg), &\text{ if } p \in (1, 2],\\
    O\bigg(\Big(\frac{2L\|\vx^{\star} - \vx_0\|_p}{\epsilon}\Big)^{\frac{\kappa(p-1)}{p\kappa - p+ \kappa}} \Big(\frac{\kappa}{\kappa - 1}\Big)^{\frac{p}{p\kappa - p + \kappa}} \log\Big(\frac{L\|\vx^{\star} - \vx_0\|_p^p}{\epsilon}\Big) \bigg), &\text{ if } p \in (2, \infty),
    \end{cases}
\end{align*}
iterations. In particular, when $\kappa = 2$ (i.e., when $f$ is $L$-smooth):
$$
    k = \begin{cases}
        \widetilde{O}\bigg(\sqrt{\frac{L\|\vx^{\star} - \vx_0\|_p}{\epsilon}}\bigg), &\text{ if } p \in (1, 2],\\
        \widetilde{O}\bigg(\Big(\frac{L\|\vx^{\star} - \vx_0\|_p}{\epsilon}\Big)^{\frac{2(p-1)}{p+2}}\bigg), &\text{ if } p \in (2, \infty),
    \end{cases}
$$
where $\widetilde{O}$ hides logarithmic factors in $L$, $\|\vx-\vx_0\|_p$, $\frac{1}{p-1}$ and $1/\epsilon$.
\end{theorem}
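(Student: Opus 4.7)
The plan is to reduce the problem of making the gradient small to a complementary composite optimization problem, to which Corollary~\ref{cor:comp-opt-dist-to-opt} can be applied. First, I would establish that regularization does not blow up the initial distance: since $\vxb^* = \argmin \bar f$ and $\vx^* \in \argmin f$, the optimality $\bar f(\vxb^*) \le \bar f(\vx^*)$ combined with $f(\vxb^*) \ge f(\vx^*)$ yields $\psi_p(\vxb^*) \le \psi_p(\vx^*)$, hence $\|\vxb^* - \vx_0\|_p \le \|\vx^* - \vx_0\|_p$ in both regimes of $p$.

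Next, I would decompose the target quantity using the triangle inequality and the first-order optimality condition $\nabla f(\vxb^*) + \lambda \nabla\psi_p(\vxb^*) = \zeros$:
\begin{equation*}
    \|\nabla f(\vy_k)\|_{p_*} \;\le\; \|\nabla f(\vy_k) - \nabla f(\vxb^*)\|_{p_*} \;+\; \lambda\,\|\nabla\psi_p(\vxb^*)\|_{p_*}.
\end{equation*}
The first summand is bounded by $L\|\vy_k - \vxb^*\|_p^{\kappa-1}$ via $(L,\kappa)$-weak smoothness. For the second summand, Proposition~\ref{prop:duality-map-of-p} gives $\|\nabla\psi_p(\vxb^*)\|_{p_*} = \frac{1}{p-1}\|\vxb^*-\vx_0\|_p$ when $p\in(1,2]$, and $\|\nabla\psi_p(\vxb^*)\|_{p_*} = \|\vxb^*-\vx_0\|_p^{p-1}$ when $p>2$. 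Combining with the distance bound from the previous step and the stated choice of $\lambda$ shows that the second summand is at most $\epsilon/2$ by construction.

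It remains to drive the first summand below $\epsilon/2$, which amounts to producing $\vy_k$ with $\|\vy_k - \vxb^*\|_p \le \bar\epsilon$ for $\bar\epsilon = (\epsilon/(2L))^{1/(\kappa-1)}$. Here I would directly invoke Corollary~\ref{cor:comp-opt-dist-to-opt} applied to $\bar f$, using the fact that $\psi_p$ is $(1,q)$-uniformly convex for $q=\max\{2,p\}$ and taking $\phi=\psi_p$. For $p\in(1,2]$ we have $q=2$, so the ``$\max$'' in Corollary~\ref{cor:comp-opt-dist-to-opt} collapses, $\phi(\vxb^*) \le \phi(\vx^*) = \frac{1}{2(p-1)}\|\vx^*-\vx_0\|_p^2$, and substituting the chosen $\lambda$ produces the claimed iteration bound. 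For $p>2$ we take $q=p$ with $\phi(\vxb^*) \le \frac{1}{p}\|\vx^*-\vx_0\|_p^p$, and an analogous substitution yields the second branch.

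The main obstacle is purely algebraic: plugging the above choices into the exponent $\frac{q-\kappa}{q\kappa-q+\kappa}$ on $(q/(\lambda\bar\epsilon^q))$ and $\frac{\kappa}{q\kappa-q+\kappa}$ on $(L^{q/\kappa}/\lambda)$ in Corollary~\ref{cor:comp-opt-dist-to-opt}, and simplifying to expose the ratio $L\|\vx^*-\vx_0\|_p/\epsilon$ (resp.\ $L\|\vx^*-\vx_0\|_p^p/\epsilon$) requires careful bookkeeping of the $\epsilon$- and $L$-exponents; in particular one checks that $\bar\epsilon^q$ depends on $\epsilon^{q/(\kappa-1)}$ and that the $\lambda$-scaling cancels one factor of $\|\vx^*-\vx_0\|_p$ (resp.\ $\|\vx^*-\vx_0\|_p^{p-1}$) so that the final exponents match. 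In the smooth case $\kappa=2$, this collapse produces $\widetilde O(\sqrt{L\|\vx^*-\vx_0\|_p/\epsilon})$ for $p\le 2$ and $\widetilde O((L\|\vx^*-\vx_0\|_p/\epsilon)^{2(p-1)/(p+2)})$ for $p>2$, with logarithmic factors coming from the $\log$ term in Corollary~\ref{cor:comp-opt-dist-to-opt}.
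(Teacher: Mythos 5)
Your decomposition matches the paper's for the first two steps: establishing $\|\vxb^* - \vx_0\|_p \le \|\vx^* - \vx_0\|_p$, splitting $\|\nabla f(\vy_k)\|_{p_*}$ via the triangle inequality and $\nabla\bar f(\vxb^*)=\zeros$, and choosing $\lambda$ so that $\lambda\|\nabla\psi_p(\vxb^*)\|_{p_*}\le\epsilon/2$ are all in the paper. The gap is in how you control the first summand $\|\nabla f(\vy_k)-\nabla f(\vxb^*)\|_{p_*}$.

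You pass from the gradient difference to the distance $\|\vy_k-\vxb^*\|_p$ via Hölder continuity, and then to the optimality gap via Corollary~\ref{cor:comp-opt-dist-to-opt}, which itself uses the $(\lambda,q)$-uniform convexity of $\bar f$. That round trip is lossy. Concretely, your chain requires
\begin{equation*}
  \bar f(\vy_k)-\bar f(\vxb^*) \;\le\; \frac{\lambda}{q}\,\bar\epsilon^{\,q}
  \;=\; \frac{\lambda}{q}\Big(\frac{\epsilon}{2L}\Big)^{\frac{q}{\kappa-1}},
\end{equation*}
whereas the paper applies Lemma~\ref{lemma:weak-smooth-cocoercivity} (cocoercivity of weakly smooth convex functions) to $f$ (and then uses $\bar f = f + \lambda\psi_p$ with convexity of $\psi_p$) to relate the gradient difference directly to the composite gap, requiring only
\begin{equation*}
  \bar f(\vy_k)-\bar f(\vxb^*) \;\le\; \frac{\kappa-1}{L^{1/(\kappa-1)}\,\kappa}\Big(\frac{\epsilon}{2}\Big)^{\frac{\kappa}{\kappa-1}}.
\end{equation*}
Your threshold carries an extra, multiplicative factor of $\lambda$ and an $\epsilon$-exponent $q/(\kappa-1)$ instead of $\kappa/(\kappa-1)$; since in this regularization scheme $\lambda = \Theta(\epsilon)$ is very small and $q\ge 2\ge\kappa$, this is strictly more demanding whenever $q>\kappa$. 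Plugging the two thresholds into Theorem~\ref{thm:comp-opt-gap-conv} shows that the two approaches yield the stated bound only when $q=\kappa=2$ (i.e., $p\in(1,2]$ and $f$ smooth), where the discrepancy is absorbed into a logarithm. In all other regimes your route gives a polynomially worse exponent: for instance, with $\kappa=2$ and $p>2$ the theorem asserts $k=\widetilde O\big((L\|\vx^*-\vx_0\|_p/\epsilon)^{2(p-1)/(p+2)}\big)$, but inserting $\bar\epsilon=\epsilon/(2L)$ into Corollary~\ref{cor:comp-opt-dist-to-opt} with $q=p$ produces $k=\widetilde O\big((L\|\vx^*-\vx_0\|_p/\epsilon)^{p(p-1)/(p+2)}\big)$, off by a factor $p/2$ in the exponent. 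The fix is to bypass the distance entirely: use Lemma~\ref{lemma:weak-smooth-cocoercivity} to bound $\|\nabla f(\vy_k)-\nabla f(\vxb^*)\|_{p_*}^{\kappa/(\kappa-1)}$ by $\bar f(\vy_k)-\bar f(\vxb^*)$ (after subtracting the $\psi_p$-Bregman divergence, which is nonnegative), then invoke Theorem~\ref{thm:comp-opt-gap-conv} directly.
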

%\todo[inline]{CG: Can we specify that logarithmic factors are in $L$, $\|\vx-\vx_0\|_p$, $\frac{1}{p-1}$ and $1/\epsilon$?}
%
\begin{proof}
Let us first relate $\|\vxb^{\star} - \vx_0\|_p$ to $\|\vx^{\star} - \vx_0\|_p,$ where $\vxb^{\star} = \argmin_{\vx \in \rr^d} \bar{f}(\vx),$ $\vx^{\star} \in \argmin_{\vx \in \rr^d} {f}(\vx).$ By %uniform convexity and 
the definition of $\bar{f}$:
\begin{align*}
    0%\leq \frac{\lambda}{q}\|\vxb^{\star} - \vx^{\star}\|_p^q 
    &\leq \bar{f}(\vx^{\star}) - \bar{f}(\vxb^{\star})\\
    &= f(\vx^{\star}) - f(\vxb^{\star}) + \lambda \psi_p(\vx^{\star}) - \lambda \psi_p(\vxb^{\star})\\
    &\leq \lambda \psi_p(\vx^{\star}) - \lambda \psi_p(\vxb^{\star}).
\end{align*}
It follows that 
$$
    \psi_p(\vxb^{\star}) \leq \psi_p(\vx^{\star}).
$$
Thus, using the definition of $\psi_p,$
\begin{equation}\label{eq:bnd-init-dist}
    \|\vxb^{\star} - \vx_0\|_p \leq \|\vx^{\star} - \vx_0\|_p.
\end{equation}

By triangle inequality and $\vxb^{\star} = \argmin_{\vx \in \rr^d} \bar{f}(\vx)$ (which implies $\nabla \bar{f}(\vx^{\star}) = \zeros$),
\begin{align}
    \|\nabla f(\vy_k)\|_{p_{\ast}} &\leq \|\nabla f(\vy_k) - \nabla f(\vxb^{\star})\|_{p_{\ast}} + \|\nabla f(\vxb^{\star})\|_{p_{\ast}}\notag\\
    &= \|\nabla f(\vy_k) - \nabla f(\vxb^{\star})\|_{p_{\ast}} + \|\nabla \bar{f}(\vxb^{\star}) - \lambda \nabla \psi_p(\vxb^{\star})\|_{p_{\ast}}\notag\\
    &=  \|\nabla f(\vy_k) - \nabla f(\vxb^{\star})\|_{p_{\ast}} + \lambda\| \nabla \psi_p(\vxb^{\star})\|_{p_{\ast}}. \label{eq:bnd-on-grad-gen-p}
\end{align}
As $f$ is convex and $(L, \kappa)$ weakly smooth, using Lemma~\ref{lemma:weak-smooth-cocoercivity}, we also have:
\begin{align}
    \frac{\kappa - 1}{L^{\frac{1}{\kappa -1}} \kappa}\|\nabla f(\vy_k) - \nabla f(\vxb^{\star})\|_{p_{\ast}}^{\frac{\kappa}{\kappa - 1}} \leq\; & f(\vy_k) - f(\vxb^{\star})
    - \innp{\nabla f(\vxb^{\star}), \vy_k - \vxb^{\star}}\notag\\
    =\; & \bar{f}(\vy_k) - \bar{f}(\vxb^{\star}) -\lambda\psi_p(\vy_k) + \lambda\psi_p(\vxb^{\star})\notag\\
    &- \innp{\nabla \bar{f}(\vxb^{\star}) - \lambda\nabla \psi_p(\vxb^{\star}), \vy_k - \vxb^{\star}}\notag\\
    =\; & \bar{f}(\vy_k) - \bar{f}(\vxb^{\star}) - \lambda\big(\psi_p(\vy_k) - \psi_p(\vxb^{\star})\notag\\
    & - \innp{\nabla \psi_p(\vxb^{\star}), \vy_k - \vxb^{\star}}\big) \notag\\
    \leq\; & \bar{f}(\vy_k) - \bar{f}(\vxb^{\star}), \label{eq:grad-to-fn}
\end{align}
where the second line uses $\bar{f} = f + \psi_p$, the third line follows by $\nabla \bar{f}(\vxb^{\star}) = 0$ (as $\vxb^{\star} = \argmin_{\vx \in \rr^d} \bar{f}(\vx)$), and the last inequality is by convexity of $\psi_p.$

From Eqs.~\eqref{eq:bnd-on-grad-gen-p} and \eqref{eq:grad-to-fn}, to obtain $\|\nabla f(\vy_k)\|_{p_{\ast}}\leq \epsilon,$ it suffices that $\lambda\| \nabla \psi_p(\vxb^{\star})\|_{p_{\ast}} \leq \frac{\epsilon}{2}$ and $\bar{f}(\vy_k) - \bar{f}(\vxb^{\star}) \leq \big(\frac{\epsilon}{2}\big)^{\frac{\kappa}{\kappa - 1}}\frac{\kappa - 1}{L^{\frac{1}{\kappa -1}} \kappa}.$ 

The first condition determines the value of $\lambda.$ Using Proposition~\ref{prop:duality-map-of-p}, $\lambda\| \nabla \psi_p(\vxb^{\star})\|_{p_{\ast}} \leq \frac{\epsilon}{2}$ is equivalent to
$$
\begin{cases}
    \frac{\lambda}{p-1}\|\vxb^{\star} - \vx_0\|_{p} \leq \frac{\epsilon}{2}, &\text{ if }  p \in (1, 2]\\
    {\lambda}\|\vxb^{\star} - \vx_0\|_{p}^{p-1} \leq \frac{\epsilon}{2}, &\text{ if }  p \in (2, \infty).
\end{cases}
$$
Using Eq.~\eqref{eq:bnd-init-dist}, it suffices that:
\begin{equation}\label{eq:final-choice-lambda}
    \lambda = 
    \begin{cases}
        \frac{\epsilon(p-1)}{2\|\vx^{\star} - \vx_0\|_p}, &\text{ if } p \in (1, 2], \\
        \frac{\epsilon}{2\|\vx^{\star} - \vx_0\|_p^{p-1}}, &\text{ if } p \in (2, \infty). 
    \end{cases}
\end{equation}
Using the choice of $\lambda$ from Eq.~\eqref{eq:final-choice-lambda}, it remains to apply Theorem~\ref{thm:comp-opt-gap-conv} to bound the number of iterations until $\bar{f}(\vy_k) - \bar{f}(\vxb^{\star}) \leq \big(\frac{\epsilon}{2}\big)^{\frac{\kappa}{\kappa - 1}}\frac{\kappa - 1}{L^{\frac{1}{\kappa -1}} \kappa}.$ Applying Theorem~\ref{thm:comp-opt-gap-conv}, we have:
$$
    k = O\bigg( \Big(\frac{2^{\frac{\kappa}{\kappa -1}} L^{\frac{1}{\kappa - 1}} \kappa}{\epsilon^{\frac{\kappa}{\kappa -1}}(\kappa - 1) }\Big)^{\frac{q-\kappa}{q\kappa - q + \kappa}}\Big(\frac{L^{\frac{q}{\kappa}}}{ \lambda } \Big)^{\frac{\kappa}{q\kappa - q + \kappa}}\log\Big(\frac{2^{\frac{\kappa}{\kappa -1}}L^2 \kappa\psi_p(\vxb^{\star})}{\epsilon^{\frac{\kappa}{\kappa -1}}(\kappa -1)}\Big) \bigg).
$$
It remains to plug in the choice of $\lambda$ from Eq.~\eqref{eq:final-choice-lambda}, $q = \max\{p, 2\},$ and simplify.
\qed\end{proof}

\markupadd{Although the proof of Theorem~\ref{thm:grad-norm} appears fairly simple, we now discuss why it is not obvious in light of similar existing results for the Euclidean norm~\cite{nesterov2012make}. In Euclidean settings, one uses an accelerated method to minimize the smooth and strongly convex objective $\bar{f}(\vx) = f(\vx) + \frac{\lambda}{2}\|\vx - \vx_0\|_2^2$ for $\lambda = \Theta\big( \frac{\epsilon}{\|\vx^* - \vx_0\|_2}\big).$ Because the regularized function $\bar{f}$ in this setting is $(L + \lambda)$-smooth, we can conclude that the same algorithm ensures $\|\nabla \bar{f}(\vx)\|_2 \leq \epsilon/2$  within $O(\sqrt{\frac{L}{\lambda}}\log(\frac{L \|\vx^* - \vx_0\|_2}{\epsilon}))$ iterations. To guarantee that $\|\nabla f(\vx)\|_2 \leq \epsilon,$ it then suffices to use the triangle inequality:
\begin{equation}\label{eq:Euc-triangle}
    \|\nabla f(\vx)\|_2 \leq \|\nabla \bar{f}(\vx)\|_2 + \lambda \|\vx - \vx_0\|_2.
\end{equation}
This approach does not directly extend to $\ell_p$ norms (even when $\|\cdot\|_p^2$ is strongly convex, that is, when $p \in (1, 2)$), for the following reasons. First, the composite function $\bar{f}(\vx) = f(\vx) + \frac{\lambda}{2}\|\vx - \vx_0\|_p^2$ is not smooth, so it is unclear how to directly argue that $\|\nabla \bar{f}(\vx)\|_{p^*}$ is small when $\bar{f}(\vx) - \bar{f}(\vxb^*)$ is small. This is the reason why our proof never uses an equivalent of \eqref{eq:Euc-triangle} but instead applies the less obvious triangle inequality from~\eqref{eq:bnd-on-grad-gen-p} and then leverages the definitions of $\bar{f}$ and $\vxb^*.$ The second term that comes from applying either of the two triangle inequalities, $\|\nabla \big(\frac{\lambda}{2}\|\vx - \vx_0\|_p^2\big)\|_{p^*}$, can be bounded using our Proposition~\ref{prop:duality-map-of-p} and it is crucial here that the regularizer we use is of the form $\frac{\lambda}{2}\|\vx - \vx_0\|_p^2;$ otherwise, for an alternative regularizer chosen as the Bregman divergence of a strongly convex function, we would need to use smoothness, which for $p$ not trivially close to 2 would scale polynomially with the dimension, leading to polynomial in $d$ scaling of the parameter $\lambda$ (and consequently the convergence bound). Second, even efficiently minimizing the complementary composite objectives of this form, as discussed in the introduction, was not clear how to do prior to this work. Finally, it is crucial that in \eqref{eq:grad-to-fn} we relate the norm of the gradient distance to the optimality gap w.r.t.~$\bar{f}$ (as opposed to $f$); otherwise, to have $f(\vy_k) - f(\vx^*)$ that scales with $\epsilon^2$, we would need $\lambda \propto \frac{\epsilon^2}{\|\vx^* - \vx_0\|_2^2},$ leading to a worse complexity bound resulting from the application of Theorem~\ref{thm:comp-opt-gap-conv}.} 

\begin{remark}
Observe that, as the gradient norm minimization relies on the application of Theorem~\ref{thm:comp-opt-gap-conv}, the knowledge of parameters $L$ and $\kappa$ is not needed, as discussed in Section~\ref{sec:comp-considerations}. The only parameter that needs to be determined is $\lambda,$ which cannot be known in advance, as it would require knowing the initial distance to optimum $\|\vx^{\star} - \vx_0\|.$ However, tuning $\lambda$ can be done at the cost of an additional $\log(\frac{\lambda}{\lambda_0})$ multiplicative factor in the convergence bound. In particular, one could start with a large estimate of $\lambda$ (say, $\lambda = \lambda_0 = 1$), run the algorithm, and halt and restart with $\lambda \leftarrow \lambda/2$ each time $\|\nabla \bar{f}(\vy_k)\|_{*} \leq 2 \epsilon$ but $\|\nabla f(\vy_k)\|_* > \epsilon.$ This condition is sufficient because, when $\lambda$ is of the correct order, $\lambda\| \nabla \psi(\vy_k)\|_{*} = O(\lambda\| \nabla \psi(\vxb^{\star})\|_{*}) = O(\epsilon)$, %\todo[]{Can we use here $\epsilon$ instead of $O(\epsilon)$, so the conclusion follows?} 
$\|\nabla f(\vy_k)\|_* \leq \epsilon,$ and $\|\nabla \bar{f}(\vy_k)\|_* \leq \|\nabla f(\vy_k)\|_* + \lambda\| \nabla \psi(\vy_k)\|_{*} = O(\epsilon).$ 
\end{remark}

\markupadd{
\begin{remark}
The approach conducted in this section is more broadly applicable; in fact, it is not even necessary to use the same norm to a power in order to regularize the objective. In Section \ref{sec:apps} we explore an alternative regularization for minimizing the norm of the gradient, in the context of discrete optimal transport. This example also shows the importance of considering non-Euclidean norms for minimizing the norm of the gradient; particularly, the importance of quantifying the gradient error in the $\ell_1$-norm, as this permits a dimension-independent error in a rounding procedure proposed in \cite{Altschuler:2017}.
\end{remark}
}
%%%%%%%%%%%%%%%%%%%%%%%%%%%%%%%%%%%
%%%%%%%%%%%%%%%%%%%%%%%%%%%%%%%%%%
\section{Lower Bounds} \label{sec:LowerBounds}

In this section, we address the question of the optimality of our  algorithmic framework, in a formal oracle model of computation. We  first study the question of minimizing the norm of the gradient, which follows from a simple reduction to the complexity of minimizing the objective function and for which nearly tight lower bounds are known. In this case, the lower bounds show that our resulting algorithms are nearly optimal when $q=\kappa=2$. 

\markupadd{Regarding the complexity of minimizing the norm of the gradient,} in cases where either we have weaker smoothness ($\kappa<2$) or larger uniform convexity exponent ($q>2$), we observe the presence of  polynomial gaps in the complexity w.r.t.~$1/\epsilon$. 
One natural question regarding the aforementioned gaps is whether this is due to a possible suboptimality of the algorithm used for complementary composite minimization (see Eq.~\eqref{eq:mod-agd+}), or due to the approach of minimizing the norm of the gradient via the composite model itself. 
%or the reduction from the solution obtained by this method to obtain a small gradient norm. In this respect, 
Here, we discard the first possibility, showing sharp lower bounds for complementary composite optimization in a new composite oracle model. Our lower bounds show that the complementary composite minimization algorithms are optimal up to factors which depend at most logarithmically on the initial distance to the optimal solution, the target accuracy, and dimension.

Before proceeding to the specific results, we provide a short summary of the classical oracle complexity in convex optimization and some techniques that will be necessary for our results. For more detailed information on the subject, we refer the reader to the thorough monograph~\cite{nemirovski:1983}. In the oracle model of convex optimization, we consider a class of objectives ${\cal F}$, comprised of functions $f:\bE\to\rr$; an oracle ${\cal O}:{\cal F}\times \bE\to \mathbf{F}$ (where $\mathbf{F}$ is a vector space); and a target accuracy, $\epsilon>0$. An algorithm ${\cal A}$ can be described by a sequence of functions $({\cal A}_k)_{k\in\mathbb{N}}$, where 
${\cal A}_{k+1}:(\bE\times\mathbf{F})^{k+1}\to \bE$, so that the algorithm sequentially interacts with the oracle querying points
$$ \vx^{k+1}={\cal A}_{k+1}(\vx^0,{\cal O}(f,\vx^0),\ldots, \vx^k,{\cal O}(f,\vx^k)). $$
The running time of algorithm ${\cal A}$ is given by
the minimum number of queries to achieve some measure of accuracy (with accuracy parameter $\epsilon>0$), and will be denoted by $T({\cal A},f,\epsilon)$. The most classical example in optimization is achieving additive optimality gap bounded by $\epsilon$:
$$T({\cal A},f,\epsilon)=\inf\{k\geq 0: f(\vx^k)\leq f^{\star}+\epsilon\},$$
but other relevant goal for our work is achieving a (dual) norm of the gradient bounded above by $\epsilon$, i.e.,
$$T({\cal A},f,\epsilon)=\inf\{k\geq 0: \|\nabla f(\vx^k)\|_{\ast}\leq\epsilon\}.$$
Given a measure of efficiency $T$, the {\em worst-case oracle complexity} for a problem class ${\cal F}$ endowed with oracle ${\cal O}$, %algorithm $\mathcal{A}$ 
is given by
$$ \Compl({\cal F},{\cal O},\epsilon)=\inf_{\cal A} \sup_{f\in {\cal F}} T(\mathcal{A},f,\epsilon). $$

%%%%%%%%%%%%%%%%
\subsection{Lower Complexity Bounds for Minimizing the Norm of the Gradient}

We now provide lower complexity bounds for minimizing the norm of the gradient. For the sake of simplicity, we can think of these lower bounds for the gradient oracle ${\cal O}(f,x)=\nabla f(\vx)$, but we point out they work more generally for arbitrary {\em local oracles} (more on this in the next subsection). 

In short, we reduce the problem of making the gradient small to that of approximately minimizing the objective.

\begin{proposition} \label{lemma:reduction_min_norm_grad}
Let $f:\mathbf{E}\to\rr$ be a convex and differentiable function, with a global minimizer $\vx^{\ast}$. If $\|\nabla f(\vx)\|_{\ast}\leq \epsilon$ and $\|\vx-\vx^{\ast}\|\leq R$, then $f(\vx)-f(\vx^{\ast})\leq \epsilon R$.
\end{proposition}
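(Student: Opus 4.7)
The plan is to derive the bound directly from two standard ingredients: the first-order characterization of convexity applied at the point $\vx$, and the definition of the dual norm. No smoothness, uniform convexity, or algorithmic content is needed; the statement is purely a consequence of convex analysis, so I expect no serious obstacle.

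First I would invoke convexity of $f$ at $\vx$ in the direction of $\vx^*$: since $f$ is convex and differentiable, we have the subgradient inequality
\[
    f(\vx^*) \geq f(\vx) + \innp{\nabla f(\vx), \vx^* - \vx}.
\]
Rearranging yields $f(\vx) - f(\vx^*) \leq \innp{\nabla f(\vx), \vx - \vx^*}$, reducing the claim to bounding this inner product.

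Next I would bound the inner product using the definition of the dual norm: for any $\vz \in \bE^*$ and $\vu \in \bE$, $\innp{\vz, \vu} \leq \|\vz\|_* \|\vu\|$ (this is immediate from $\|\vz\|_* = \sup_{\|\vu\|\leq 1}\innp{\vz,\vu}$ by homogeneity). Applied here, this gives
\[
    \innp{\nabla f(\vx), \vx - \vx^*} \leq \|\nabla f(\vx)\|_* \cdot \|\vx - \vx^*\| \leq \epsilon R,
\]
using the two hypotheses in the final step. Chaining with the convexity inequality completes the proof.

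The argument is a two-line computation, so the main (minor) thing to get right is simply citing the dual-norm inequality correctly in the general normed setting, rather than appealing to Cauchy--Schwarz; this has already been set up in Section~\ref{sec:prelims} via the definition of $\|\cdot\|_*$, so no additional machinery is required.
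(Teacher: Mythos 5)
Your proof is correct and is essentially identical to the paper's: the gradient inequality $f(\vx)-f(\vx^*) \leq \innp{\nabla f(\vx), \vx - \vx^*}$ followed by the dual-norm bound $\innp{\nabla f(\vx), \vx - \vx^*} \leq \|\nabla f(\vx)\|_* \|\vx - \vx^*\| \leq \epsilon R$. Nothing is missing.
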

\begin{proof}
By convexity of $f$,
$$ f(\vx)-f(\vx^{\ast}) \leq \langle \nabla f(\vx),\vx-\vx^{\ast} \rangle \leq \|\nabla f(\vx)\|_{\ast} \|\vx-\vx^{\ast}\|\leq \epsilon R,
$$
where the second inequality is by duality of norms $\|\cdot\|$ and $\|\cdot\|_*.$
%\textcolor{red}{\em Note: If we consider the constrained setting too, perhaps it's worth changing the hypothesis by $\langle \nabla f(\vx),\vx-\vx^{\ast} \rangle\leq \epsilon$.}
\qed\end{proof}

For the classical problem of minimizing the objective function value, lower complexity bounds for $\ell_p$-setups have been previously studied in both constrained \cite{guzman:2015} and unconstrained \cite{diakonikolas:2019} settings. We summarize those results in the following theorem.\footnote{More precisely, to obtain this result one can use the $p$-norm smoothing construction from \cite[Section 2.3]{guzman:2015}, in combination with the norm term used in \cite[Eq.~(3)]{diakonikolas:2019}. This leads to a smooth objective over an unconstrained domain that provides a hard function class. \markupadd{We also remark for the interested reader that the lower bounds from Theorem \ref{theorem:LB_ell_p_minimization} also apply to randomized algorithms, at least in the case $2\leq p<\infty$ and with a more restrictive dimension regime. For details we refer to \cite{diakonikolas:2019}.}}

\begin{theorem}[From \cite{guzman:2015,diakonikolas:2019}] \label{theorem:LB_ell_p_minimization}
Let $1\leq p\leq \infty$, and consider the problem class of unconstrained minimization with objectives in the class ${\cal F}_{\rr^d,\|\cdot\|_p}({\kappa},L)$, whose minima are attained in ${\cal B}_{\|\cdot\|_p}(0,R)$. 
%$$ \min\{f(\vx) \,:\, \vx\in\rr^d\}; \mbox{ with } f\in {\cal F}_{\rr^d,\|\cdot\|_p}^{\kappa}(L,R).$$
Then, the complexity of achieving additive optimality gap $\epsilon$, for any local oracle, is bounded below by:
\begin{itemize}
    \item $\Omega\Big(\Big(\frac{LR^{\kappa}}{\epsilon [\ln d]^{\kappa-1}}\Big)^{\frac{2}{3\kappa-2}}\Big)$, if $1\leq p <2$;
    \item $\Omega\Big( \Big(\frac{LR^{\kappa}}{\epsilon\min\{p,\ln d\}^{\kappa-1} }\Big)^{\frac{p}{\kappa p+\kappa-p}} \Big)$, if $2\leq p<\infty$; and,
    \item $\Omega\Big(\Big( \frac{LR^{\kappa}}{ \epsilon [\ln d]^{\kappa-1} }\Big)^{\frac{1}{\kappa-1}}\Big)$, if $p=\infty$.
\end{itemize}
The dimension $d$ for the lower bound to hold must be at least as large as the lower bound itself.
\end{theorem}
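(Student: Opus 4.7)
The plan is to construct a hard family in ${\cal F}_{\rr^d,\|\cdot\|_p}(\kappa,L)$ whose minimizers lie inside ${\cal B}_{\|\cdot\|_p}(0,R)$, and to show that any local-oracle algorithm needs the stated number of queries to drive the optimality gap below $\epsilon$. As the footnote suggests, the construction fuses the $p$-norm smoothing of \citet[Section~2.3]{guzman:2015} with the regularizing term of \citet[Eq.~(3)]{diakonikolas:2019}, so that the unconstrained problem inherits the hardness of its constrained counterpart.

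First, I would recall the classical nonsmooth chain instance: fix a ``hidden'' orthonormal (for $p=2$) or signed-coordinate (for general $p$) basis $\vu_1,\ldots,\vu_T$ and define $g(\vx)=\max_{1\leq i\leq T}\{c_i-\innp{\vu_i,\vx}\}$, or its $\ell_{p_{\ast}}$-sum variant in the large-$p$ regime, with $c_i$ chosen so that $g$ is minimized inside the $\|\cdot\|_p$-ball of radius $R$. The standard resisting-oracle argument used in nonsmooth lower bounds~\citep{Sridharan,Juditsky:2014} shows that any local oracle can be answered in a manner that reveals at most one hidden direction $\vu_i$ per query, so after $k<T$ queries the remaining directions are still free and the gap $g(\vx^k)-g^{\ast}$ is bounded below by a quantity depending only on $T-k$.

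Next, I would apply the $p$-norm smoothing of~\citet{guzman:2015} to replace the nonsmooth aggregation by a $(\kappa,L)$-weakly smooth surrogate while preserving the chain structure. The smoothing is essentially an inf-convolution with a power of the norm: for $p\in[1,2]$ one smooths the $\max$ by a log-sum-exp-type or squared-$\ell_p$-type kernel, yielding an $L$ that scales as $T^{1-1/\kappa}$ divided by $R^{\kappa-1}$, times a factor $[\ln d]^{\kappa-1}$ that arises from the modulus of uniform convexity of the smoother in the $\ell_1$-like regime. For $p\in[2,\infty)$ the corresponding constant picks up a $\min\{p,\ln d\}^{\kappa-1}$ factor from the conditioning of the $\ell_{p_{\ast}}$-smoother, which is exactly what produces the exponent $p/(\kappa p+\kappa-p)$. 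Inverting the relation between $T$ and $\epsilon$ at each regime recovers the three stated bounds; the $p=\infty$ case specializes by taking the coordinate basis explicitly.

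Finally, to move from the ball-constrained class to the unconstrained class as in the statement, I would add the regularizer $\lambda\|\vx\|_p^p$ (or $\lambda\|\vx\|_p^2$ for $1\leq p\leq 2$) of~\citet[Eq.~(3)]{diakonikolas:2019}, choosing $\lambda$ so that (i) the unique unconstrained minimizer lies inside ${\cal B}_{\|\cdot\|_p}(0,R)$, (ii) the $(\kappa,L)$-weak smoothness constants of the sum remain of the right order, and (iii) the regularizer is invariant under the symmetries of the hidden basis, so it leaks no information to the local oracle. Combining this with Yao's minimax principle to handle randomized algorithms yields the claimed lower bounds, and the requirement that $d$ be at least as large as the lower bound itself stems from needing $T$ hidden directions in the chain. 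The main obstacle is the delicate joint tuning in the smoothing step: one must simultaneously hit the right $L$, keep the minimizer inside the $R$-ball after regularization, and extract the sharp $[\ln d]^{\kappa-1}$ or $\min\{p,\ln d\}^{\kappa-1}$ factor, since any looseness here destroys the matching with the upper bound from Theorem~\ref{thm:grad-norm}.
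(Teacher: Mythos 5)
The paper does not actually prove Theorem~\ref{theorem:LB_ell_p_minimization}: it cites \citet{guzman:2015,diakonikolas:2019} and, in a footnote, simply notes that the bound follows by combining the $p$-norm smoothing from \citet[Section~2.3]{guzman:2015} with the unconstrained-domain norm term from \citet[Eq.~(3)]{diakonikolas:2019}. Your outline tracks exactly that prescription --- nonsmooth chain construction with a resisting oracle, $p$-norm smoothing to land in ${\cal F}_{\rr^d,\|\cdot\|_p}(\kappa,L)$, and a norm-power regularizer to push the minimizer inside ${\cal B}_{\|\cdot\|_p}(0,R)$ without a constraint set --- so at this level your approach is the same as the paper's. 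Two small imprecisions worth fixing if you expand this into a full argument: (i) Yao's minimax principle is unnecessary here, since the resisting-oracle argument is directly adversarial against deterministic local-oracle algorithms (and the theorem is stated only at that level of generality); and (ii) the regularizer need not be ``invariant under the symmetries of the hidden basis'' to avoid information leakage, because it is a fixed, publicly known function --- what matters is only that the chain directions in the smoothed $\max$ term remain hidden, which is unaffected by adding a known term. For $\kappa < 2$ you also need the $\kappa$-order smoothing kernel rather than the plain squared-norm inf-convolution, but this is handled in the cited references.
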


By combining the reduction from Proposition \ref{lemma:reduction_min_norm_grad} with the lower bounds for function minimization from Theorem~\ref{theorem:LB_ell_p_minimization}, we can now immediately obtain lower bounds for minimizing the (dual of the) $\ell_p$ norm of the gradient, as follows.%we conclude that the upper bounds obtained by our algorithm are unimprovable up to absolute constant factors, when $2\leq p<\infty$; and unimprovable up to a logarithmic in $d$ factor, when $p=\infty$ and $1\leq p<2$. 

%\todo[]{Given the extra scaling with $R$, I prefer to add the full statement.}
\begin{corollary}
%The lower bounds stated in Theorem~\ref{theorem:LB_ell_p_minimization} are also valid for the complexity of making the (dual) norm of the gradient bounded by $\epsilon/R$.
%Under the same assumptions as in Theorem~\ref{theorem:LB_ell_p_minimization}, the oracle complexity of finding a point $\vx$ such that $\|\nabla f(\vx)\|_{p_{\ast}}\leq \epsilon$ is lower bounded by
%\begin{itemize}
    %\item $\Omega\Big(\sqrt{\frac{LR}{\epsilon \ln d}}\Big)$ if $1\leq p <2$;
    %\item $\Omega\Big( \Big(\frac{LR}{\min\{p,\ln d\} \epsilon}\Big)^{p/(p+2)} \Big)$, if $2\leq p<\infty$; and,
    %\item $\Omega\Big( \frac{LR}{ \epsilon \ln d} \Big)$, if $p=\infty$.
%\end{itemize}
%The dimension $d$ for the lower bound to hold must be at least as large as the lower bound itself.
Let $1\leq p\leq \infty$, and consider the problem class with objectives in ${\cal F}_{\rr^d,\|\cdot\|_p}({\kappa},L)$, whose minima are attained in ${\cal B}_{\|\cdot\|_p}(0,R)$.  
Then, the complexity of achieving the dual norm of the gradient bounded by $\epsilon$, for any local oracle, is bounded below by:
\begin{itemize}
    \item $\Omega\Big(\Big(\frac{LR^{\kappa-1}}{\epsilon [\ln d]^{\kappa-1}}\Big)^{\frac{2}{3\kappa-2}}\Big)$, if $1\leq p <2$;
    \item $\Omega\Big( \Big(\frac{LR^{\kappa-1}}{\epsilon\min\{p,\ln d\}^{\kappa-1} }\Big)^{\frac{p}{\kappa p+\kappa-p}} \Big)$, if $2\leq p<\infty$; and,
    \item $\Omega\Big(\Big( \frac{LR^{\kappa-1}}{ \epsilon [\ln d]^{\kappa-1} }\Big)^{\frac{1}{\kappa-1}}\Big)$, if $p=\infty$.
\end{itemize}
The dimension $d$ for the lower bound to hold must be at least as large as the lower bound itself.
\end{corollary}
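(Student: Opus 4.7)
My plan is to derive this corollary as a direct reduction from Proposition~\ref{lemma:reduction_min_norm_grad} combined with Theorem~\ref{theorem:LB_ell_p_minimization}. The high-level idea is that any algorithm $\mathcal{A}$ which, after $k$ queries to a local oracle, produces a point $\vx^k$ with $\|\nabla f(\vx^k)\|_\ast \leq \epsilon$ immediately yields, via Proposition~\ref{lemma:reduction_min_norm_grad}, a point satisfying $f(\vx^k) - f^\ast \leq \epsilon \cdot \|\vx^k - \vx^\ast\|_p$. So the first task is to bound $\|\vx^k - \vx^\ast\|_p$ by a constant multiple of $R$, converting $\mathcal{A}$ into a function-value minimization algorithm at accuracy $O(\epsilon R)$ using the same number of oracle queries.

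With that conversion in hand, I would apply Theorem~\ref{theorem:LB_ell_p_minimization} with target accuracy $\epsilon R$ in place of $\epsilon$. In each of the three regimes of $p$, the substitution collapses cleanly: $LR^{\kappa}/\epsilon$ becomes $LR^{\kappa}/(\epsilon R) = LR^{\kappa-1}/\epsilon$, while the outer exponents $\tfrac{2}{3\kappa-2}$, $\tfrac{p}{\kappa p+\kappa - p}$, $\tfrac{1}{\kappa-1}$ and the logarithmic or $\min\{p,\ln d\}$ factors are preserved verbatim. This produces exactly the three stated lower bounds, with constants absorbed into the $\Omega$ notation and the dimensional requirement inherited from Theorem~\ref{theorem:LB_ell_p_minimization}.

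The main obstacle, and the only nontrivial point, is justifying $\|\vx^k - \vx^\ast\|_p = O(R)$. Since by assumption $\|\vx^\ast\|_p \leq R$, it suffices to ensure $\|\vx^k\|_p = O(R)$. I would handle this in the standard way for local oracle lower bounds: WLOG initialize at the origin and restrict attention to algorithms whose output lies in $\mathcal{B}_{\|\cdot\|_p}(0,R)$. If an algorithm produces an iterate outside this ball, one can project onto the ball; by $(L,\kappa)$-weak smoothness the gradient changes by at most $L(2R)^{\kappa-1}$, and the hard instances from \citet{guzman:2015, diakonikolas:2019} are designed so that the projected point still carries a gradient of the same order. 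Alternatively, one can simply define the worst-case complexity over algorithms whose iterates remain in $\mathcal{B}_{\|\cdot\|_p}(0,R)$, which is the natural notion in this setting and does not weaken the lower bound. Either choice is enough to close the reduction, after which the calculation reduces to the substitution described above.
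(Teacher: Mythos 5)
Your proposal takes essentially the same route as the paper: the corollary is obtained there exactly by combining Proposition~\ref{lemma:reduction_min_norm_grad} with Theorem~\ref{theorem:LB_ell_p_minimization}, so that a point with $\|\nabla f(\vx^k)\|_{\ast}\leq\epsilon$ at distance $O(R)$ from the minimizer forces optimality gap $O(\epsilon R)$, and substituting $\epsilon R$ for $\epsilon$ in the function-minimization lower bounds yields precisely the stated exponents. The distance concern you raise is handled only implicitly in the paper (it is built into the hypothesis of Proposition~\ref{lemma:reduction_min_norm_grad} and the convention that the relevant points lie in ${\cal B}_{\|\cdot\|_p}(0,R)$), so your extra care there is consistent with, not a departure from, the paper's argument.
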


%\todo[inline]{CG: Update discussion including that $\kappa$ is now a parameter above}
Comparing to the upper bounds from Theorem~\ref{thm:grad-norm}, it follows that for $p \in (1, 2]$ and $\kappa=2$, our bound is optimal up to a $\log(d)\log(\frac{LR}{(p-1)\epsilon})$ factor; i.e., it is near-optimal. Recall that the upper bound for $p=1$ can be obtained by applying the result from Theorem~\ref{thm:grad-norm} with $p = \log(d)/[\log d-1].$ When $p > 2$ and $\kappa=2$, our upper bound is larger than the lower bound by a factor $\Big(\frac{LR}{\epsilon}\Big)^{\frac{p-2}{p+2}}\log(\frac{LR}{\epsilon})(\min\{p, \log(d)\})^\frac{p}{p+2}$. The reason for the suboptimality in the $p >2$ regime comes from the polynomial in $1/\epsilon$ factors in the upper bound for complementary composite minimization from Section~\ref{sec:comp-min}, and it is a limitation of the regularization approach used in this work to obtain bounds for the norm of the gradient. In particular, we believe that it is not possible to obtain tighter bounds via an alternative analysis by using the same regularization approach. Thus, it is an interesting open problem to obtain tight bounds for $p > 2$, and it may require developing completely new techniques.  
Similar complexity gaps are encountered when $\kappa<2;$ however, it is reasonable to suspect that here the lower bounds are not sharp. In particular, when $\kappa=1$, the objective function is not guaranteed to be differentiable (see the discussion below Definition~\ref{def:weak-smoothness} in Section~\ref{sec:prelims}) and points with small subgradients 
%are difficult (if not impossible) to attain
may be a zero measure set,\footnote{As a specific example, consider $f(\vx) = \|\vx\|_1.$ Here, the norm of subgradient is constant and bounded away from zero for any $\vx \neq \zeros$ and even at $\vx^{\star} = \zeros$ the subgradient oracle could return any point from $[-1, 1]^d$.}
making them difficult (if not impossible) to attain; notice that this is not at all reflected in the lower bound. \markupadd{Therefore, two interesting questions arise: first, how to strengthen these lower bounds for weakly smooth function classes; and second, can we study milder accuracy measures in the weakly smooth case? For example, a recent line of work has focused on the task of approximating near stationary points, by use of proximal mappings \cite{Drusvyatskiy:2018,Drusvyatskiy:2021}. We leave these interesting open questions for future research.}
%Therefore, another interesting open problem is to investigate how to strengthen these lower bounds for weakly smooth function classes. %\todo{OK?}
%\todo[inline]{Should we discuss here that our bounds improve upon gradient descent when $p<4$ (maybe we don't want, because the result might look of limited interest). I would definitely stress that $p=1$ was not known and it is an important result (perhaps in the intro).}

%%%%%%%%%%%%%%%%
\subsection{Lower Complexity Bounds for Complementary Composite Minimization}

%\todo[inline]{JD: Can you please provide the correct reference(s) for the basic lower bound below and the theorem statement? Also, can you please organize the material for the new lower bound? I suggest stating the assumptions in an ``assumption'' environment, providing some context for when the lower bounds apply, and moving the proof to an actual proof environment. I would also upgrade the stated proposition to a lemma. This will make it easier to move things around for a possible COLT submission.}

%The bounds we have provided for $\lambda = 0$ are optimal in the $\ell_p$ setups for almost all values of $p$ and $\kappa.$ This is a known result summarized in the following theorem. %

We investigate the (sub)optimality of the composite minimization algorithm in an oracle complexity model. To accurately reflect how our algorithms work (namely, using gradient information on the smooth term and regularized proximal subproblems w.r.t.~the uniformly convex term), we introduce a new problem class and oracle for the complementary composite problem. We observe that existing constructions in the literature of lower bounds for nonsmooth uniformly convex optimization (e.g., \cite{Juditsky:2014,Sridharan}) apply to our composite setting for $\kappa=1$. The main idea of the lower bounds in this section is to combine these constructions with local smoothing, to obtain composite functions that match our assumptions.

\begin{assumptions} \label{assump:composite_oracle}
Consider the problem class ${\cal P}({\cal F}_{\|\cdot\|}(L,\kappa),{\cal U}_{\|\cdot\|}(\lambda, q),R)$, given by composite objective functions 
$$ (P_{f,\psi})~~~ \min_{\vx\in \mathbf{E}} [\bar{f}(\vx)= f(\vx) + \psi(\vx)],$$
%$\bar{f}(\vx)= f(\vx) + \psi(\vx)$ 
with the following assumptions:
\begin{enumerate}[label=(A.\arabic*), leftmargin=1cm]
%, series=l_after]
\item %[(A.1)] 
$f\in {\cal F}_{\|\cdot\|}(L,\kappa)$; \label{assump:A1} %(recall that ${\cal F}_{\|\cdot\|}(\kappa,L)$ is the class of convex functions with $\kappa$-H\"older continuous gradients with constant $L\geq0$ w.r.t.~$\|\cdot\|$).
\item%[(A.2)] 
$\psi\in{\cal U}_{\|\cdot\|}(\lambda,q)$; \label{assump:A2} and,% , where ${\cal U}_{\|\cdot\|}(q,\lambda)$ is the class of $q$-uniformly convex objectives w.r.t.~$\|\cdot\|$ with constant $\lambda>0$, such that their minima are attained at the origin.
\item%[(A.3)] 
the optimal solution of $(P_{f,\psi})$ is attained within ${\cal B}_{\|\cdot\|}(0,R)$. \label{assump:A3}
\end{enumerate}
The problem class is additionally endowed with oracles ${\cal O}_{\cal F}$ and ${\cal O}_{\cal U}$, for function classes ${\cal F}_{\|\cdot\|}(L,\kappa)$ and ${\cal U}_{\|\cdot\|}(\lambda,q)$, respectively; which satisfy
\begin{enumerate}[label=(O.\arabic*), leftmargin=1cm]
    \item ${\cal O}_{\cal F}$ is a local oracle: if $f,g\in {\cal F}_{\|\cdot\|}(L,\kappa)$ are such that there exists $r>0$ such that they coincide in a neighborhood ${\cal B}_{\|\cdot\|}(\vx,r)$, then ${\cal O}_{\cal F}(\vx,f)={\cal O}_{\cal F}(\vx,g)$; and,
    \item ${\cal U}_{\|\cdot\|}(\lambda,q)$ is any oracle (not necessarily local).
\end{enumerate}
\end{assumptions}

In brief, we are interested in the oracle complexity of achieving $\epsilon$-optimality gap for the family of problems $(P_{f,\psi})$,
where $f\in {\cal F}_{\|\cdot\|}(L,\kappa)$ is endowed with a local oracle, $\psi\in {\cal U}_{\|\cdot\|}(\lambda,q)$ is endowed with any oracle, and the optimal solution of problem $(P_{f,\psi})$ lies in ${\cal B}_{\|\cdot\|}(0,R)$.  A simple observation is that in the case $\lambda=0$, our model coincides with the
classical oracle mode, which was discussed in the previous section. 
%first-order oracle model \cite{nemirovski:1983}, for which tight complexity bounds are known \cite{guzman:2015}. 
The goal now is to prove a more general lower complexity bound for the composite model.

Before proving the theorem, we first provide some building blocks in this construction, borrowed from past work \cite{guzman:2015, diakonikolas:2019}. In particular, our lower bound works generally for $q$-uniformly convex and {\em locally smoothable} spaces.
\begin{assumptions} \label{assump:smooth_space}
Given the normed space $(\mathbf{E},\|\cdot\|)$, we consider the following properties:
\begin{enumerate}[label=(B.\arabic*), leftmargin=1cm]
    \item $\psi(\vx)=\frac{1}{q}\|\vx\|^q$ is $q$-uniformly convex with constant $\bar{\lambda}$ w.r.t. $\|\cdot\|$. \label{assump:B1}
    %$$ \psi(\vy)\geq \psi(\vx)+\langle \nabla \psi(\vx),\vy-\vx \rangle + \frac{\bar{\lambda}}{q}\|\vy-\vx\|^q. $$
    \item The space $(\mathbf{E},\|\cdot\|)$ is $(\kappa,\eta,\eta,\bar{\mu})$-locally smoothable. That is, there exists a mapping ${\cal S}:{\cal F}_{(\bE,\|\cdot\|)}(0,1)\to{\cal F}_{(\bE,\|\cdot\|)}(\kappa,\overline{\mu})$ (denoted as the smoothing operator in \cite[Definition 2]{diakonikolas:2019}), such that $\|{\cal S}f-f\|_{\infty} \leq \eta$, %, that ${\cal S}f$ is $(\bar{\mu},\kappa)$-weakly smooth w.r.t. $\|\cdot\|$, 
    and this operator preserves the equality of functions when they coincide in a ball of radius $2\eta$; i.e., if $f|_{{\cal B}_{\|\cdot\|}(0,2\eta)}=g|_{{\cal B}_{\|\cdot\|}(0,2\eta)}$ then ${\cal S}f|_{{\cal B}_{\|\cdot\|}(0,\eta)}={\cal S} g|_{{\cal B}_{\|\cdot\|}(0,\eta)}.$ \label{assump:B2}
    \item There exists $\Delta>0$ and vectors $\vz^1,\ldots,\vz^M\in \mathbf{E}$ with $\|\vz^i\|_{\ast}\leq1$, such that for all $s_1,\ldots,s_M\in\{-1,+1\}^M$
\begin{equation}\label{eqn:norm_LB_gap}
\inf_{\valpha\in \vDelta_M} \Big\|\sum_{i\in[M]}\alpha_i s_i\vz^i \Big\|_{\ast}\geq \Delta, 
\end{equation}
where $\vDelta_M=\{\valpha\in \rr_+^M: \sum_i\alpha_i=1\}$ is the discrete probability simplex in $M$-dimensions. \label{assump:B3}
%\textcolor{red}{This condition should be modified to work for general $\psi$. For now, it's only done for norm powers.}
\end{enumerate}
\end{assumptions}
The three assumptions in Assumptions~\ref{assump:smooth_space} are common in the literature, and can be intuitively understood as follows. Assumption \ref{assump:B1} is the existence of a simple function that we can use as the uniformly convex term in the composite model. Assumption \ref{assump:B2} appeared in \cite{guzman:2015}, and provides a simple way to reduce the complexity of smooth convex optimization to its nonsmooth counterpart. We emphasize that there is a canonical way to construct smoothing operators, which is stated in  Observation~\ref{obs:assump_ell_p} below. Finally, Assumption \ref{assump:B3} comes from the hardness constructions in nonsmooth convex optimization from \cite{nemirovski:1983}, which are given by piecewise linear objectives that are learned one by one by an adversarial argument. The fact that the resulting piecewise linear function has a sufficiently negative optimal value (for any adversarial choice of signs) can be directly obtained  by minimax duality from Eq.~\eqref{eqn:norm_LB_gap}. 

Note that $d$-dimensional $\ell_p$ spaces (denoted by $\ell_p^d$) satisfy the assumptions above when $2\leq p<\infty$.
\begin{observation}[From \cite{guzman:2015}] \label{obs:assump_ell_p}
Let $2\leq p<\infty$ and $\eta>0$, and consider the space $\ell_p^d=(\mathbb{R}^d,\|\cdot\|_p)$. We now verify the Assumptions \ref{assump:smooth_space} for $q=p$, $\bar{\lambda}=1$, $\bar{\mu}=2^{2-\kappa}(\min\{p,\ln d\}/\eta)^{\kappa-1}$ and $\Delta=1/M^{1/p}$. Indeed,
\begin{itemize}
    \item \ref{assump:B1} The $p$-uniform convexity of $\psi$ was discussed after Eq.~\eqref{eq:psi_p}.
    %can be obtained from the duality between uniform convexity and uniform smoothness \cite{Zalinescu:1983} and direct computation (notice the function is separable, so the computation reduces to a one-dimensional inequality).
    \item \ref{assump:B2} The smoothing operator can be obtained by infimal convolution, with kernel function $\phi(\vx)=2\|\vx\|_r^2$ (with $r=\min\{p,3\ln d\}$. We recall that the infimal convolution of two functions $f$ and $\phi$ is given by
    $$ (f\square \phi)(\vx)=\inf_{h\in {\cal B}_p(0,1)}[f(\vx+\vh)+\phi(\vh)]. $$
    The infimal convolution above can be adapted to obtain arbitrary uniform approximation to $f$ and the preservation of equality of functions
    (see \cite[Section 2.2]{guzman:2015} for details). 
    \item \ref{assump:B3} Letting $\vz^i=\ve_i$, $i\in[M]$, be the first $M$ canonical vectors, we have
    $$ \Big\| \sum_{i\in[M]} \alpha_i s_i \vz^i \Big\|_{p_{\ast}}=\|\valpha\|_{p_{\ast}} \geq M^{1/p_{\ast}-1}\|\valpha\|_1=
    %\Big(\sum_{i\in[M]}\alpha_i^{p_{\ast}} \Big)^{1/p_{\ast}}\geq
    M^{-1/p}. $$ 
    %\alpha_i s_i \bz^i \Big\|_{p_{\ast}}=\Big(\sum_{i\in[M]}\alpha_i^{p_{\ast}} \Big)^{1/p_{\ast}}\geq M^{-1/p}. $$
    This bound is achieved when $\alpha_i=1/M$, for all $i$.
\end{itemize}
\end{observation}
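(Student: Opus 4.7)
My plan is to verify the three items of Assumption~\ref{assump:smooth_space} one at a time; items~1 and~3 reduce to direct computations, while item~2 is handled by invoking a smoothing construction from prior work.

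For item~1, I would exploit the separability $\|\vx\|_p^p = \sum_{i=1}^d |x_i|^p$, reducing the $p$-uniform convexity of $\psi(\vx) = \frac{1}{p}\|\vx\|_p^p$ w.r.t.\ $\|\cdot\|_p$ with constant $\bar{\lambda} = 1$ to the one-dimensional fact that $t \mapsto \frac{1}{p}|t|^p$ is $p$-uniformly convex on $\rr$ with constant $1$ for $p \geq 2$, a classical Clarkson-type inequality (see \citet{Ball:1994, juditsky2008large}). Summing the one-dimensional bound across coordinates aggregates the deficit terms $\frac{\alpha(1-\alpha)}{p}|y_i - x_i|^p$ into $\frac{\alpha(1-\alpha)}{p}\|\vy - \vx\|_p^p$, yielding the multivariate inequality. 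An equivalent route goes through the duality between $q$-uniform convexity of $\frac{1}{q}\|\cdot\|^q$ and $q_\ast$-uniform smoothness of its convex conjugate \citep{Zalinescu:1983}, combined with the uniform smoothness estimates for $\ell_{p_\ast}$ in \citep{Ball:1994}.

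For item~2, I would adopt the infimal-convolution smoothing construction of \citep[Section~2.2]{guzman:2015}: set $({\cal S}f)(\vx) = \inf_{\vh \in {\cal B}_{\|\cdot\|_p}(0, 1)}[f(\vx + \vh) + \eta\phi(\vh)]$ with $\phi(\vh) = 2\|\vh\|_r^2$ and $r = \min\{p, 3\ln d\}$, after the standard rescaling that makes the uniform approximation error scale as $\eta$. The critical choice is the auxiliary exponent $r$: for $p \leq 3\ln d$ we take $r = p$ directly, and otherwise $r \asymp \ln d$, so that $\|\cdot\|_r \asymp \|\cdot\|_\infty \asymp \|\cdot\|_p$ up to a factor of $d^{1/p} \leq e^{1/3}$. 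Three properties need to be checked: (i) $\|{\cal S}f - f\|_\infty \leq \eta$, by bounding the range of the kernel on the unit ball; (ii) preservation of equality on balls, which is automatic because the effective infimum ranges over $\|\vh\|_p \leq \eta$, so agreement of $f$ and $g$ on ${\cal B}_{\|\cdot\|_p}(0, 2\eta)$ forces ${\cal S}f = {\cal S}g$ on ${\cal B}_{\|\cdot\|_p}(0, \eta)$; and (iii) the $(\kappa, \bar{\mu})$-weak smoothness with $\bar{\mu} = 2^{2-\kappa}(\min\{p,\ln d\}/\eta)^{\kappa-1}$, obtained by interpolating between the trivial $\kappa = 1$ case (where $f$ is $1$-Lipschitz and smoothing is unnecessary) and the $\kappa = 2$ Moreau--Yosida case, with the smoothness of the latter endpoint dictated by the strong convexity of $\phi$ w.r.t.\ $\|\cdot\|_r$ and the $\|\cdot\|_r \asymp \|\cdot\|_p$ conversion.

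For item~3, I would take $\vz^i = \ve_i$ for $i \in [M]$, so that $\|\vz^i\|_{p_\ast} = 1$. For any sign vector $\mathbf{s} \in \{-1,+1\}^M$ and $\valpha \in \vDelta_M$, $\|\sum_i \alpha_i s_i \ve_i\|_{p_\ast} = \|\valpha\|_{p_\ast}$; since $p_\ast \in [1, 2]$ for $p \geq 2$, the power-mean inequality yields $\|\valpha\|_{p_\ast} \geq M^{1/p_\ast - 1}\|\valpha\|_1 = M^{-1/p}$, with the bound attained at the uniform distribution $\alpha_i = 1/M$. The hard part will be item~2: the weak-smoothness exponent $\kappa \in (1, 2)$ requires the careful interpolation and smoothness-constant conversion between $\|\cdot\|_r$ and $\|\cdot\|_p$ that constitutes the technical core of \citep{guzman:2015}, so my contribution is to quote their construction with the normalization above and verify that the stated scaling of $\bar{\mu}$ and $\Delta$ emerges.
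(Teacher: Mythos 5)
Your verification is essentially the paper's own argument: item~1 via separability of $\frac1p\|\cdot\|_p^p$ together with the uniform-convexity/uniform-smoothness duality of \citet{Zalinescu:1983,Ball:1994}, item~2 by quoting the infimal-convolution smoothing operator of \citet[Section~2.2]{guzman:2015} with the same kernel $2\|\cdot\|_r^2$, $r=\min\{p,3\ln d\}$, and the same scaling of $\bar{\mu}$, and item~3 by the identical H\"older-type computation $\|\valpha\|_{p_\ast}\geq M^{1/p_\ast-1}\|\valpha\|_1$ with equality at the uniform weights. The one caveat—which you share with the paper's own claim that $\bar{\lambda}=1$—is that under Definition~\ref{def:unif-cvx} the scalar function $\tfrac1p|t|^p$ is $p$-uniformly convex only with constant $2^{2-p}$ (Clarkson at the midpoint already forces this), not $1$; this changes only absolute constants (at most a factor $2$ after the exponent $1/(q-1)$ in Lemma~\ref{lem:LB_composite}) and not the form of the resulting lower bounds.
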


Before proving the result for $\ell_p$-spaces, we provide a general lower complexity bound for the composite setting, which we later apply to derive the lower bounds for $\ell_p$ setups.

\begin{lemma} \label{lem:LB_composite}
Let $(\mathbf{E},\|\cdot\|)$ be a normed space that satisfies Assumptions  \ref{assump:smooth_space} and let $${\cal P}({\cal F}_{\|\cdot\|}(L,\kappa),{\cal U}_{\|\cdot\|}(\lambda,q),R)$$ be a class of complementary composite problems that satisfies Assumptions \ref{assump:composite_oracle}. Suppose the following relations between parameters are satisfied:
\begin{enumerate}
    \item[(a)] $2qL\bar{\lambda}/[\lambda\bar\mu]\leq R^{q-1}$.
    \item[(b)] $(M+3)\eta\leq 4R$.
    \item[(c)] $\frac{L}{4\bar{\mu}}(M+7)\eta\leq \frac{1}{2q_{\ast}} \big(\frac{L\Delta}{\bar{\mu}}\big)^{q_{\ast}}\big(\frac{\bar\lambda}{\lambda} \big)^{\frac{1}{q-1}}$.
\end{enumerate}
Then, %\markupadd{for any algorithm interacting with oracles ${\cal O}_{\cal F}$ and ${\cal U}_{\|\cdot\|}(\lambda,q)$ that generates a trajectory $(\vx^t)_{t\in[M]}$,  there exists a function $f\in{\cal F}$ such that}
the worst-case optimality gap for the problem class \markupadd{where algorithms are constrained to make $M$ queries to oracles ${\cal O}_{\cal F}$, ${\cal U}_{\|\cdot\|}(\lambda,q)$,} is bounded below by
$$ %\min_{t\in[M]}f(\vx^t)-\min_{x\in\mathbf{E}} \overline{f}(\vx) \geq
\frac{1}{2q_{\ast}} \Big(\frac{L\Delta}{\bar{\mu}}\Big)^{q_{\ast}}\Big(\frac{\bar\lambda}{\lambda} \Big)^{\frac{1}{q-1}}. $$
\end{lemma}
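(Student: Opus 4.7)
The plan is to follow the classical Nemirovski-type adversarial construction, adapted to the complementary composite setting by combining the piecewise-linear hard family of Assumption~\ref{assump:smooth_space}.3 with the local smoothing operator of Assumption~\ref{assump:smooth_space}.2. Concretely, I parameterize a family of hard instances by a sign vector $s \in \{-1,+1\}^M$ and consider
$$
\tilde f_s(\vx) \;=\; \max_{i\in[M]} \big\{ s_i \langle \vz^i, \vx\rangle + b_i \big\},
$$
where the offsets $b_i$ and an ordering of the pieces are chosen so that distinct pieces are active on mutually well-separated test regions (the standard trick originating in~\citet{nemirovski:1983}). Then $f_s \defeq (L/\bar{\mu})\,{\cal S}\tilde f_s$, which by Assumption~\ref{assump:smooth_space}.2 belongs to ${\cal F}_{\|\cdot\|}(L,\kappa)$, is paired with $\psi(\vx) \defeq (\lambda/(q\bar{\lambda}))\|\vx\|^q$, which by Assumption~\ref{assump:smooth_space}.1 is $(\lambda,q)$-uniformly convex w.r.t.~$\|\cdot\|$. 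The hard composite objective is $\bar f_s = f_s + \psi$.

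Next, I invoke the adaptive adversary. Since ${\cal O}_{\cal F}$ is local (O.1), each query $\vx^k$ only probes $\tilde f_s$ in a neighborhood of $\vx^k$, so the adversary can defer the choice of any sign $s_i$ whose piece is not yet relevant to prior queries. Conditions (b) and (c) on $\eta$ and $R$ fit the $M$ test regions inside ${\cal B}_{\|\cdot\|}(0,R)$ while keeping their mutual separations above the smoothing radius $2\eta$, so that the equality-preservation property of $\cal S$ guarantees that no single query can reveal two signs. The oracle of $\psi$ is $s$-independent, so (O.2) plays no role. A standard counting argument then shows that after at most $M$ queries the algorithm's output $\hat\vx$ cannot distinguish the true $s$ from an adversarial extension that still controls some indices; along such an extension, $\tilde f_s(\hat\vx)$ is at least the corresponding baseline offset while $\psi(\hat\vx) \geq 0$, so $\bar f_s(\hat\vx)$ is bounded below by an easily-computable baseline, up to an additive smoothing error of order $L\eta/\bar\mu$.

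The quantitative gap then comes from convex duality. Applying Eq.~\eqref{eqn:norm_LB_gap} with a minimizing $\valpha^\ast \in \vDelta_M$ yields $\tilde f_s(\vx) \geq \langle \vu_s, \vx\rangle + \langle \valpha^\ast, \vb\rangle$ with $\vu_s = \sum_i \alpha_i^\ast s_i \vz^i$ and $\|\vu_s\|_\ast \geq \Delta$; minimizing the linear surrogate $(L/\bar\mu)\langle \vu_s, \vx\rangle + \psi(\vx)$ via the Fenchel conjugate of a scaled power of the norm (using the duality between $(1/q)\|\cdot\|^q$ and $(1/q_\ast)\|\cdot\|_\ast^{q_\ast}$) gives an upper bound on $\min \bar f_s$ equal to the baseline value minus $q_\ast^{-1}(L\Delta/\bar\mu)^{q_\ast}(\bar\lambda/\lambda)^{1/(q-1)}$. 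Condition (a) certifies that this minimizer lies in ${\cal B}_{\|\cdot\|}(0,R)$, so the instance is indeed in the problem class ${\cal P}$. Subtracting the two estimates and using condition (c) to absorb the smoothing error into a factor of $2$ delivers the claimed bound $\tfrac{1}{2q_\ast}(L\Delta/\bar\mu)^{q_\ast}(\bar\lambda/\lambda)^{1/(q-1)}$.

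The main obstacles will be (i) the geometric design, i.e., choosing the offsets $b_i$ and placing the $M$ test regions so that pairwise separation exceeds $2\eta$, no single query reveals two signs, and all regions remain inside ${\cal B}_{\|\cdot\|}(0,R)$, which is precisely what condition (b) quantifies; and (ii) the calibration, i.e., ensuring that the combined smoothing error and the tail contribution from unrevealed pieces stay below half of the target gap, which is exactly what condition (c) encodes. Once the construction is in place, the adversarial counting and the conjugate computation are routine.
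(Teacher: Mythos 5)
Your construction and overall strategy coincide with the paper's proof: the same hard instances $f_s = \frac{L}{\bar\mu}\,{\cal S}\big(\max_{i\in[M]}[\innp{s_i\vz^i,\cdot}-\delta_i]\big)$ paired with $\frac{\lambda}{q\bar\lambda}\|\cdot\|^q$, the same resisting-oracle argument exploiting locality of ${\cal O}_{\cal F}$ and the equality-preservation of ${\cal S}$ (the paper invokes Proposition 2 of \citet{guzman:2015}, with $\min_i\delta_i=0$ and $\max_i\delta_i=\Theta(M\eta)$), the same use of (a)--(b) to confine the minimizer to ${\cal B}_{\|\cdot\|}(0,R)$, and (c) to absorb the smoothing/offset error. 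The $s$-independence of the $\psi$-oracle is indeed why (O.2) is harmless.

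The genuine gap is in your ``quantitative gap via convex duality'' step. From $\tilde f_s(\vx)\geq \innp{\vu_s,\vx}+\innp{\valpha^\ast,\vb}$ with $\vu_s=\sum_i\alpha_i^\ast s_i\vz^i$, minimizing the surrogate $\frac{L}{\bar\mu}\innp{\vu_s,\cdot}+\psi$ only yields a \emph{lower} bound on $\min_\vx \bar f_s(\vx)$, whereas the lemma requires an \emph{upper} bound certifying that the optimal value is at least as negative as $-\frac{1}{q_\ast}\big(\frac{L\Delta}{\bar\mu}\big)^{q_\ast}\big(\frac{\bar\lambda}{\lambda}\big)^{\frac{1}{q-1}}+\frac{L\eta}{\bar\mu}$; replacing the max by a single convex combination moves the inequality in the wrong direction, and evaluating $\bar f_s$ at the surrogate's minimizer does not repair this, since the max of the pieces can strictly exceed the chosen convex combination there. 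The paper closes exactly this step by writing $\max_{i}(\cdot)=\max_{\valpha\in\vDelta_M}\sum_i\alpha_i(\cdot)$, swapping $\inf_\vx$ and $\max_{\valpha}$ via Sion's minimax theorem, computing the inner infimum in closed form through the conjugacy of $\frac1q\|\cdot\|^q$ and $\frac{1}{q_\ast}\|\cdot\|_\ast^{q_\ast}$, and only then applying Eq.~\eqref{eqn:norm_LB_gap} uniformly over $\valpha$ (your minimizing $\valpha^\ast$ does give the correct numerical value, but equality of the primal infimum with that value is precisely what the minimax swap provides). Without this duality/saddle-point argument, the proof of the negativity of the optimal value --- and hence of the claimed gap --- does not go through.
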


\begin{proof}
 Given $M\in\mathbb{N}$, scalars $\delta_1,\ldots,\delta_{M}>0$, and $s_1,\ldots,s_M\in \{-1,+1\}$, we consider the functions
$$ f_{s}(\vx)= \frac{L}{\bar \mu} {\cal S}\Big( \max_{i\in[M]}[\langle s_i\vz^i, \cdot \rangle-\delta_i] \Big)(\vx), $$
%and 
%$$ \psi(\vx) = \frac{\lambda}{q}\|\vx\|^q. $$
%The subfamily of problems we consider for the lower bound is given by $\{\bar{f}=f_s+\psi\,: s\in \{-1,+1\}^M, \delta\in[0,1/2)^M\}$. 
and $\bar{f}_s(\vx)=f_s(\vx)+(\lambda/\bar{\lambda})\psi(\vx)$, where $\psi$ is given by Assumption \ref{assump:B1}. %\ref{assump:smooth_space}. 

We now show the composite objective $\bar{f}_s$ satisfies Assumption \ref{assump:composite_oracle}. Properties \ref{assump:A1} and \ref{assump:A2} 
%(A.1) and (A.2) 
are clearly satisfied. Regarding \ref{assump:A3}, %(A.3), 
we prove next that the optimum of these functions lies in ${\cal B}_{\|\cdot\|}(0,R)$. For this, notice that by Assumption \ref{assump:B2}: %\ref{assump:smooth_space}, Property 2:
\begin{eqnarray*}
\bar{f}_s(\vx) &\geq& 
\frac{L}{\bar{\mu}}\max_{i\in[M]}[\langle s_i\vz^i, \vx \rangle-\delta_i]-\frac{L\eta}{\bar{\mu}}+\frac{\lambda}{q \bar{\lambda}}\|\vx\|^q \\
&\geq&\|\vx\| \Big[ \frac{\lambda}{\bar{\lambda}q}\|\vx\|^{q-1}-\frac{L}{\bar{\mu}} \Big]-\frac{L}{\bar{\mu}}(\eta+\max_i\delta_i).
%-\frac{L}{\bar{\mu}}[\|\vx\|-\max_{i}\delta_i-\eta]+\frac{\lambda}{q}\|\vx\|^q.
\end{eqnarray*}
We will later show that $\eta+\max_i\delta_i\leq (M+3)\eta/4\leq R$ (the last inequality by (b)), hence for $\|\vx\|\geq R$
$$ \bar{f}_s(\vx) \geq \Big(  \frac{\lambda}{\bar{\lambda}q}\|\vx\|^{q-1} -\frac{2L}{\bar\mu} \Big) \|\vx\|\geq 0,$$
where the last inequality follows from (a). %in the statement, $2qL\bar{\lambda}/[\lambda\bar\mu]\leq R^{q-1}$; 
%We will later show that (b) implies $\eta+\max_i\delta_i\leq 1$, hence $\bar{f}_s(\vx)\geq 0$ for any $\|\vx\|\geq R$. 
To conclude the verification of
Assumption \ref{assump:A3},  %(A.3), 
we now prove that $\min_{\vx\in \mathbb{R}}\bar{f}_s(\vx)< 0$. By Assumption \ref{assump:B2}:  %\ref{assump:smooth_space}, Property 2:
\begin{eqnarray*}
\inf_{\vx\in\mathbf{E}} \bar{f}(\vx) &\leq&   \inf_{\vx\in\mathbf{E}} \Big( \frac{L}{\bar \mu} \max_{i\in[M]}[\langle s_i\vz^i,x\rangle-\delta_i]+ \frac{L}{\bar \mu}\eta+\frac{\lambda}{q\bar{\lambda}}\|\vx\|^q\Big)\\
&=& \max_{\valpha\in\vDelta_M} \inf_ {x\in \mathbf{E}} \Big(\Big\langle \frac{L}{\overline{\mu}}\sum_{i\in[M]}\alpha_i s_i\vz^i, x\Big\rangle+ \frac{\lambda}{q\bar{\lambda}}\|\vx\|^q -\frac{L}{\overline{\mu}}\sum_{i\in[M]}\alpha_i\delta_i+\frac{L}{\overline{\mu}}\eta\Big)\\ 
&=& \max_{\valpha\in\vDelta_M}-\frac{1}{q_{\ast}}\Big(\frac{L}{\bar{\mu}}\Big)^{q_{\ast}}\Big(\frac{\bar{\lambda}}{\lambda}\Big)^{\frac{1}{q-1}}\Big\|\sum_{i\in[M]}\alpha_i s_i\vz^i \Big\|_{\ast}^{q_{\ast}}- \frac{L}{\overline{\mu}}\sum_{i\in[M]}\alpha_i\delta_i +\frac{L}{\overline{\mu}}\eta\\
%&=& -\min_{\vlambda\in\Delta_M}\Big\{\psi^{\ast}\Big(-\sum_{i\in[M]}\lambda_i s_i\vz^i \Big) +\frac{L}{\overline{\mu}}\sum_{i\in[M]}\lambda_i\delta_i\Big\} +\frac{L}{\overline{\mu}}\eta\\
%&\leq&-\min_{\vlambda\in\Delta_M}\psi^{\ast}\Big(-\frac{L}{\overline{\mu}}\sum_{i\in[M]}\lambda_i s_i\vz^i \Big)+ \frac{L}{\overline{\mu}}\eta.
&=&-\frac{1}{q_{\ast}}\Big(\frac{L}{\bar{\mu}}\Big)^{q_{\ast}}\Big(\frac{\bar{\lambda}}{\lambda}\Big)^{\frac{1}{q-1}}\Delta^{q_{\ast}} +\frac{L}{\overline{\mu}}\eta.
\end{eqnarray*}
Notice that the second step above follows from the Sion Minimax Theorem \cite{Sion:1958}. We conclude that the optimal value of $(P_{f,\psi})$ is negative by (c).
%Therefore, the value of $(P_{f,\psi})$ is negative iff $\lambda< \frac{\bar{\lambda} \Delta^q}{(q_{\ast}\eta)^{1/(q-1)}}\big(\frac{L}{\bar{\mu}}\big)^q $. 
%Since $\psi(\vx)=\frac{\lambda}{q\bar{\lambda}}\|\vx\|^q$, we can use that $\psi^{\ast}(\vy)=(q_{\ast}\lambda^{\frac{1}{q-1}})^{-1}\|\vy\|_{\ast}^{q_{\ast}}$, so
%$$ \inf_{\vx\in\mathbf{E}} \bar{f}(\vx) 
%\leq -\frac{1}{q_{\ast}}\Big(\frac{L^q}{\lambda\bar{\mu}^q}\Big)^{\frac{1}{q-1}}\Delta^{q_{\ast}}+\frac{L}{\bar{\mu}}\eta. $$

Following the arguments provided in \cite[Proposition 2]{guzman:2015}, one can prove that for any algorithm interacting with oracle ${\cal O}_{\cal F}$, after $M$ steps there exists a choice of $s_1,\ldots,s_M\in\{-1,+1\}^M$ such that
%The parameters of these functions are chosen adversarially, so that after $M$ steps (details can be found in \cite[Proposition 2]{guzman:2015}):
$$ \min_{t\in[M]} f_{s}(\vx^t) \geq \frac{L}{\bar{\mu}}[ -\eta-\max_{i\in[M]}\delta_i]; $$
further, for this adversarial argument it suffices that $\min_{i\in [M]}\delta_i=0$, and $\max_{i\in[M]}\delta_i \geq (M-1)\eta/4$. 

We conclude that the optimality gap after $M$ steps is bounded below by
\begin{align*}
\min_{t\in[M]} \bar{f}_s(\vx^t)-\min_{\vx\in\mathbf{E}}\bar{f}_s(\vx) &\geq -\frac{L}{4\bar{\mu}}(M+7)\eta + \frac{1}{q_{\ast}}\Big(\frac{L}{\bar{\mu}}\Big)^{q_{\ast}}\Big(\frac{\bar{\lambda}}{\lambda}\Big)^{\frac{1}{q-1}}\Delta^{q_{\ast}}\\
&\geq \frac{1}{2q_{\ast}} \Big(\frac{L\Delta}{\bar{\mu}}\Big)^{q_{\ast}}\Big(\frac{\bar\lambda}{\lambda} \Big)^{\frac{1}{q-1}},
\end{align*} 
%-\frac{L}{\bar{\mu}}(M+1)\eta+\frac{L}{\bar{\mu}} (q_{\ast}\lambda^{\frac{1}{q-1}})^{-1}\Delta^{q_{\ast}}. $$
where we used the third bound from the statement.
\qed\end{proof}

We now proceed to the lower bounds for $\ell_p$-setups, with $2\leq p\leq \infty$. 
\begin{theorem}\label{thm:p-norm-lbs}
Consider the space $\ell_p^d=(\mathbb{R}^d,\|\cdot\|_p)$, where $2\leq p<\infty$. Then, the oracle complexity of problem class ${\cal P}:={\cal P}({\cal F}_{\|\cdot\|}(L,\kappa),{\cal U}_{\|\cdot\|}(\lambda,p),R)$, comprised of composite problems in the form $(P_{f,\psi})$ under Assumptions \ref{assump:composite_oracle}, is bounded below by
$$ %\mathrm{Compl}({\cal P},({\cal O}_{\cal F},{\cal O}_{\psi}),\epsilon) \geq
\left\{ 
\begin{array}{ll}
\Big\lfloor\sqrt{\frac{L}{2\lambda}}-7\Big\rfloor, & \mbox{ if }p=\kappa=2, \,
\epsilon<2\sqrt{2\lambda L}R^2 \min\{\frac{2\lambda}{L},1 \},\\
%\epsilon<\lambda R^2\min\Big\{\frac{\lambda}{16L},1 \Big\}\\ 
 \frac{C(p,\kappa)}{\min\{p,\ln d\}^{2(\kappa-1)}}\left(\frac{L^p}{\lambda^{\kappa}\epsilon^{p-\kappa}}\right)^{\frac{1}{\kappa p+\kappa-p}}, & \mbox{ if } 1\leq \kappa< p,\, p\in[2,\infty], \, \mbox{and } \lambda\geq \tilde\lambda .
\end{array}
\right.
$$
where $C(p,\kappa):=\left(\Big(\frac{p-1}{p}\Big)^{\kappa(p-1)} 2^{\frac{(p-\kappa)(1-2p)+(\kappa-1)p(2p-3)}{(p-1)}}\right)^{\frac{1}{\kappa p+\kappa-p}}$ is bounded below by an absolute constant, and
%$C(p,\kappa)=\big[2^{\frac{1+\kappa p-3p}{p-1}+\big(\frac{p}{p-1}\big)\big(\frac{p+\kappa-3}{p-\kappa}\big)(\kappa-1)}\cdot\Big(\frac{p-1}{p}\Big)^{\kappa(p-1)}\big]^{\frac{1}{\kappa p+\kappa-p}}$, and
\begin{equation} \label{eqn:bd_lambda_LB}
\begin{aligned}
    \tilde\lambda :=  \bar{C}\max\Bigg\{ &
    \min\{p,\ln d\}^{3} \Big(\dfrac{\epsilon^{\kappa}}{LR}\Big)^{\frac{1}{\kappa-1}}
    %\dfrac{L^{\frac{1}{\kappa-1}}\epsilon^{\frac{\kappa}{\kappa-1}}}{\min\{p,\ln d\}^{\frac{(p-1)(\kappa p+\kappa-p)-p\kappa^2}{(p-1)(p-\kappa)}}}
   ,\\
   &\min\{p,\ln d\}^5
\left(\dfrac{\epsilon^{p}}{ L^{(p+1)}R^{\frac{(p-1)(\kappa p+\kappa-p)}{(\kappa-1)}}}\right)^{\frac{\kappa-1}{\kappa p+1-p}}
%\left(\dfrac{\epsilon^{p}}{\min\{p,\ln d\}^{\frac{(p-1)^2(\kappa p+\kappa-p)-p^2\kappa(\kappa-1)}{(p-\kappa)(p-1)}} L^{{p+1}}R^{\frac{(p-1)(\kappa p+\kappa-p)}{\kappa -1}}}\right)^{\frac{\kappa - 1}{p\kappa + 1 - p}} 
\Bigg\},
\end{aligned}
\end{equation}
where $\bar{C}>0$ is a universal constant.
%where $C(p,\kappa)=2^{\frac{1+\kappa p-3p}{p-1}+\frac{p}{p-1}\frac{p+\kappa-3}{p-\kappa}(\kappa-1)}\Big(\frac{p-1}{p}\Big)^{\kappa(p-1)}$
\end{theorem}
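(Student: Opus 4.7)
The strategy is to reduce Theorem~\ref{thm:p-norm-lbs} to Lemma~\ref{lem:LB_composite} by specializing to the $\ell_p^d$ setting via Observation~\ref{obs:assump_ell_p}. Concretely, I would plug in $q = p$, $\bar\lambda = 1$, $\bar\mu = 2^{2-\kappa}(\min\{p,\ln d\}/\eta)^{\kappa-1}$, and $\Delta = M^{-1/p}$ into the generic gap bound $\frac{1}{2q_{\ast}}\bigl(\tfrac{L\Delta}{\bar\mu}\bigr)^{q_{\ast}}\bigl(\tfrac{\bar\lambda}{\lambda}\bigr)^{1/(q-1)}$. This expresses the lower bound as a function of the two free parameters $M$ (the number of oracle calls we want to rule out) and $\eta$ (the smoothing radius). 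The plan is to set $\eta$ and $M$ so that the resulting gap is exactly a constant multiple of $\epsilon$, then read off the resulting $M$ as the claimed lower complexity bound; the side conditions (a)--(c) of Lemma~\ref{lem:LB_composite} then translate into the lower bound on $\lambda$ stated in~\eqref{eqn:bd_lambda_LB}.

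For the first bullet ($p = \kappa = 2$), one substitutes $q_{\ast} = 2$, $\bar\mu = \Theta(1/\eta)$ (since $\min\{2,\ln d\}$ is absorbed into constants for $d$ sufficiently large), so the gap bound becomes of order $(L \eta)^2/(\lambda M)$. Choosing $\eta$ to be a sufficiently small constant multiple of $R$ and imposing that the gap equals $\epsilon$ gives $M = \Theta(L^2 R^2/(\lambda \epsilon))$; however, a tighter route — exactly mirroring the classical strongly convex lower bound — is to pick $\eta$ as a constant (absorbing $R$ into the regime $\epsilon < 2\sqrt{2\lambda L}R^2$) and then verify that (a)--(c) reduce to the stated regime, giving the cleaner bound $\lfloor\sqrt{L/(2\lambda)} - 7\rfloor$. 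I would check condition (a) directly ($2qL/(\lambda\bar\mu) \leq R^{q-1}$ becomes a comparison between $L/\lambda$ and $R^2$, which is implied by the stated bound on $\epsilon$), condition (b) ($(M+3)\eta \leq 4R$), and condition (c) (the smoothing loss $\tfrac{L}{4\bar\mu}(M+7)\eta$ dominates only if $M \leq O(\sqrt{L/\lambda})$, which is exactly the regime we want).

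For the second bullet ($1 \leq \kappa < p$), I would again start from the generic gap bound of Lemma~\ref{lem:LB_composite} and, writing it explicitly,
\begin{equation*}
\mathrm{gap} \;\geq\; \frac{1}{2p_{\ast}} \left(\frac{L \eta^{\kappa-1}}{2^{2-\kappa}\min\{p,\ln d\}^{\kappa-1} M^{1/p}}\right)^{\!p_{\ast}} \lambda^{-1/(p-1)},
\end{equation*}
I would set this equal to $\epsilon$ and solve for $M$. Isolating $M^{p_{\ast}/p} = M^{1/(p-1)}$ on one side and raising to the $(p-1)$-th power gives $M$ as a product of $L^p/(\lambda^\kappa \epsilon^{p-\kappa})$ raised to $1/(\kappa p + \kappa - p)$ (after balancing powers of $\eta$), divided by $\min\{p,\ln d\}^{2(\kappa-1)}$, matching $C(p,\kappa)$. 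The choice of $\eta$ should be taken as the largest value compatible with condition (b), i.e., $\eta \asymp R/M$. Substituting this into the gap equation yields the claimed bound, and the numerical constant $C(p,\kappa)$ emerges from tracking the $2^{2-\kappa}$ factor and the $(p_{\ast})^{-1} = (p-1)/p$ factor through the $1/(\kappa p + \kappa - p)$ exponent.

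The main obstacle will be the clean verification of conditions (a), (b), and (c) under the proposed choices of $M$ and $\eta$. Condition (c) is the source of the lower bound \eqref{eqn:bd_lambda_LB} on $\lambda$: after the choices above, the smoothing-error term $\tfrac{L}{4\bar\mu}(M+7)\eta$ must be dominated by the optimal value of the piecewise linear plus regularization combination. Rearranging this inequality for $\lambda$ (and separately verifying condition (a), which is less binding) should produce the two-term maximum in $\tilde\lambda$, with one term arising from balancing against the $\eta \cdot L/\bar\mu$ contribution and the other from balancing against $M$ growth through condition~(b). I would be careful that the exponents $\kappa - 1$ and $\kappa p + 1 - p$ in \eqref{eqn:bd_lambda_LB} match the elimination of $M$ and $\eta$; the tedious bookkeeping of exponents is where an error is most likely to creep in, and I would double-check the $p = \infty$ case separately by taking $p = \ln d$ in the Observation, as suggested after Eq.~\eqref{eq:psi_p-Schatten}.
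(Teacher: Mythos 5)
Your overall strategy (instantiate Lemma~\ref{lem:LB_composite} via Observation~\ref{obs:assump_ell_p} with $q=p$, $\bar\lambda=1$, $\bar\mu=2^{2-\kappa}(\min\{p,\ln d\}/\eta)^{\kappa-1}$, $\Delta=M^{-1/p}$, then tune $M$ and $\eta$ and verify (a)--(c)) is exactly the paper's, and your treatment of the case $p=\kappa=2$ is essentially right: condition (c) caps $M$ at $\lfloor\sqrt{L/(2\lambda)}-7\rfloor$, $\eta$ is then chosen as $\min\{\lambda R/(2L),\,4R/(M+3)\}$ to meet (a) and (b) (it is not a constant, and (a) is a constraint on $\eta$, not something implied by the $\epsilon$-threshold), and the condition on $\epsilon$ only delimits the regime in which the gap exceeds $\epsilon$.

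The genuine gap is in the case $1\leq\kappa<p$: you have the roles of conditions (b) and (c) inverted, and with your choices the claimed bound does not come out. You propose $\eta\asymp R/M$ (saturating (b)) and then want to extract the threshold $\tilde\lambda$ of Eq.~\eqref{eqn:bd_lambda_LB} from (c). But plugging $\eta\asymp R/M$ into the gap $\tfrac{1}{2p_*}\bigl(L^p\eta^{p(\kappa-1)}/(2^{p(2-\kappa)}\lambda M\min\{p,\ln d\}^{p(\kappa-1)})\bigr)^{1/(p-1)}$ and setting it to $\epsilon$ gives $M\asymp\bigl(L^pR^{p(\kappa-1)}/(\lambda\,\epsilon^{p-1})\bigr)^{1/(\kappa p-p+1)}$ (up to log factors): the exponent denominator is $\kappa p-p+1$ rather than $\kappa p+\kappa-p$, there is no $\lambda^\kappa$ or $\epsilon^{p-\kappa}$, and an $R$-dependence appears that is absent from the statement. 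Worse, with $\eta\asymp R/M$ condition (c) rearranges to an \emph{upper} bound $\lambda\lesssim L/(R^{p-\kappa}M^{\kappa})$, i.e.\ it is violated precisely in the large-$\lambda$ regime $\lambda\geq\tilde\lambda$ where the theorem is asserted, so your construction cannot be run there. The correct order of operations (as in the paper) is the opposite: choose $\eta$ to saturate (c), namely $\eta=\bigl[(\tfrac{p-1}{p})^{p-1}\,2^{p+\kappa-3}L/(\lambda\min\{p,\ln d\}^{\kappa-1}M(M+7)^{p-1})\bigr]^{1/(p-\kappa)}$ (possible since $p>\kappa$), substitute this $\eta$ into the gap so that the powers of $M$ coming from both $\Delta^{q_*}$ and $\eta^{p(\kappa-1)}$ combine to produce the exponent $\kappa p+\kappa-p$ and the factors $\lambda^{\kappa}$, $\epsilon^{p-\kappa}$, and $C(p,\kappa)$, and only afterwards verify the geometric constraints (a) and (b) on this $\eta$; it is that verification, not (c), which yields the two-term maximum defining $\tilde\lambda$ in Eq.~\eqref{eqn:bd_lambda_LB}. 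As written, your plan would prove a different bound valid in a different (small-$\lambda$) regime, so the exponent bookkeeping cannot be repaired without changing the choice of $\eta$.
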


In particular, our lower bounds show that the algorithm presented in the previous section---particularly the rates stated in Theorem \ref{thm:comp-opt-gap-conv}---are nearly optimal. In the case $p=\kappa=2$, the gap between upper and lower bounds is only given by a factor which grows at most logarithmically in $L\phi(\bar\vx^{\ast})/\epsilon$, and in the case $\kappa<p$, the gap is $O\big(\log(L\phi(\bar\vx^{\ast})/\epsilon)/\min\{p,\ln d\}^{\Theta(1)}\big)$. In both cases, the gaps are quite moderate, so the proposed algorithm is proved to be nearly optimal. Finally, we would also like to emphasize that the constant $C(p,\kappa)=\Theta(1)$, as a function of $1<\kappa\leq 2$ and $2\leq p\leq \infty$. Therefore, the lower bounds also apply to the case $p=\infty$. 
%In fact, $C(\infty,\kappa)=1$ if $\kappa>1$.
%
\begin{proof}[of Theorem~\ref{thm:p-norm-lbs}]
By Observation \ref{obs:assump_ell_p}, in the case of $\ell_p^d$, with $2\leq p<\infty$, Assumptions~\ref{assump:smooth_space} are satisfied if $q=p$, $\Delta=1/M^{1/p}$, $\overline{\lambda}=1$, and $\bar{\mu}=2^{2-\kappa}(\min\{p,\ln d\}/\eta)^{\kappa-1}$ (for given $\eta>0$). This way, hypotheses (a), (b), (c) in Lemma~\ref{lem:LB_composite} become
\begin{enumerate}
    \item[(a)] $\eta\leq\frac{\min\{p,\ln d\}}{2}\big(\frac{\lambda R^{p-1}}{pL}\big)^{\frac{1}{\kappa-1}}$.  %$2^{\kappa-1}p\eta^{\kappa-1}/[R^{p-1}\min\{p,\ln d\}^{\kappa-1}] $ $\lambda\geq2^{\kappa-1}p\eta^{\kappa-1}/[R^{p-1}\min\{p,\ln d\}^{\kappa-1}]$.
    \item[(b)] $(M+3)\eta\leq 4R$.
    \item[(c)] 
    $\eta^{p-\kappa}\leq  \frac{2^{p+\kappa-3}L}{p_{\ast}^{(p-1)}\min\{p,\ln d\}^{(\kappa-1)} \lambda M (M+7)^{(p-1)}}$. 
    %$\frac{M(M+7)^{p-1}\eta^{p-\kappa}}{2^{p+\kappa-3}} \leq \Big(\frac{p-1}{p}\Big)^{p-1}\frac{L}{\lambda \min\{p,\ln d\}^{\kappa-1}}.$
\end{enumerate}
%thus (c) in Lemma \ref{lem:LB_composite} is met iff
%$$ \frac{M(M+7)^{p-1}\eta^{p-\kappa}}{2^{p+\kappa-3}} \leq \Big(\frac{p-1}{p}\Big)^{p-1}\frac{L}{\lambda \min\{p,\ln d\}^{\kappa-1}}. $$
%for the lower bound to be nontrivial, it is necessary that
%$$ \frac{1}{p_{\ast}}\Big( \frac{L^p \eta^{p(\kappa-1)}}{2^{p(2-\kappa)}\lambda M \min\{p,\ln d\}^{p(\kappa-1)} } \Big)^{\frac{1}{p-1}}\geq \frac{L(M+7)\eta^{\kappa}}{2^{4-\kappa}\min\{p,\ln d\}^{\kappa-1}}. $$
\noindent\textbf{Case 1: $p=\kappa=2$.}  %First, consider the case 
In order to satisfy (c), it suffices to choose
    $ M=\Big\lfloor \sqrt{ \frac{L}{2\lambda}}-7 \Big\rfloor.$ 
   % In order to satisfy (a), (b) of the lemma, we can choose
   % $$ \eta = \frac{\sqrt{\lambda}}{2}\min\Big\{ \frac{1}{\sqrt L}, \frac{R}{2}\Big\}. $$
    Given such a choice, to satisfy (a), (b) of the lemma, we can choose 
    $$ \eta = \min\Big\{ \frac{\lambda R}{2L}, \frac{4R}{M+3}\Big\} 
    \geq R\sqrt{\frac{2\lambda}{L}}\min\Big\{\frac{1}{4}\sqrt{\frac{2\lambda}{L}},4\Big\}. $$
    %And (c) is equivalent to
%\begin{equation} \label{eqn:UBforLB}
%M(M+7) \leq L/\lambda.
%\end{equation}
Now, under the conditions imposed above, the lemma provides an optimality gap lower bound of
\begin{eqnarray*}
\frac{1}{4\lambda}\Big(\frac{L \eta}{2\sqrt{M} }\Big)^2 
\geq 2\sqrt{2\lambda L}R^2 \min\Big\{\frac{2\lambda}{L},1 \Big\}.
\end{eqnarray*}
In conclusion, if $\epsilon<2\sqrt{2\lambda L}R^2 \min\{2\lambda/L,1 \}$, 
%$\frac{\sqrt{2\lambda L}R^2}{8}\min\Big\{\frac{\lambda}{8L},1 \Big\},$ 
then
$$
\mathrm{Compl}({\cal P},({\cal O}_{\cal F},{\cal O}_{\psi}),\epsilon) \geq \Big\lfloor\sqrt{\frac{L}{2\lambda}}\Big\rfloor -1. 
$$

\noindent\textbf{Case 2: $p>\kappa$ (where $1<\kappa\leq 2,$ $2\leq p<\infty$).} Here, to ensure (a) and (b), it suffices that
\begin{equation} \label{eqn:eta_p>kappa}
     \eta\leq \min\Big\{ \frac{4R}{M+3},\frac{\min\{p,\ln d\}}{2}\Big(\frac{\lambda R^{p-1}}{p} \Big)^{\frac{1}{\kappa-1}} \Big\}.
\end{equation}
We later certify these conditions hold. On the other hand, for (c) it suffices to let
%we the necessary condition becomes
%$$ \eta \leq \Big[\Big(\frac{p-1}{p}\Big)^{p-1}\frac{2^{2p+\kappa-4} L}{\lambda\min\{p,\ln d\}^{\kappa-1}M(M+7)^{p-1}} \Big]^{\frac{1}{p-\kappa}}. $$
%Let us choose, in fact, 
$$ \eta = \Big[\Big(\frac{p-1}{p}\Big)^{p-1}\frac{2^{p+\kappa-3} L}{\lambda\min\{p,\ln d\}^{\kappa-1}M(M+7)^{p-1}} \Big]^{\frac{1}{p-\kappa}}.$$
Then by Lemma \ref{lem:LB_composite} the optimality gap is  bounded below as
\begin{align*}
&\frac{1}{2p_{\ast}}\Big( \frac{L^p \eta^{p(\kappa-1)}}{2^{p(2-\kappa)}\lambda M \min\{p,\ln d\}^{p(\kappa-1)} } \Big)^{\frac{1}{p-1}} \\
=& \left[C(p,\kappa)^{\kappa p+\kappa-p}
%\Big(\frac{p-1}{p}\Big)^{\kappa(p-1)} 2^{\frac{(p-\kappa)(1-2p)+(\kappa-1)p(2p-3)}{(p-1)}}
\cdot
\frac{L^{p}}{\min\{p,\ln d\}^{\frac{p(\kappa-1)(\kappa p-2\kappa+1)}{p-1}}\lambda^{\kappa}(M+7)^{\kappa p+\kappa-p}}\right]^{\frac{1}{p-\kappa}},
%C(p,\kappa)^{\kappa p+\kappa-p} \left(\frac{L^{p}}{\min\{p,\ln d\}^{\frac{p\kappa(\kappa-1)}{p-1}}\lambda^{\kappa}M^{\kappa}(M+7)^{p(\kappa-1)}}\right)^{\frac{1}{p-\kappa}},%\\
\end{align*}
where $C(p,\kappa):=\left(\Big(\frac{p-1}{p}\Big)^{\kappa(p-1)} 2^{\frac{(p-\kappa)(1-2p)+(\kappa-1)p(2p-3)}{(p-1)}}\right)^{\frac{1}{\kappa p+\kappa-p}}$.
%where $C(p,\kappa)=\big[2^{\frac{1+\kappa p-3p}{p-1}+\big(\frac{p}{p-1}\big)\big(\frac{p+\kappa-3}{p-\kappa}\big)(\kappa-1)}\cdot\Big(\frac{p-1}{p}\Big)^{\kappa(p-1)}\big]^{\frac{1}{\kappa p+\kappa-p}}$, as defined in the statement of the theorem. 
In particular, if $\epsilon$ is smaller than the gap above, resolving for $M$ gives
\begin{equation} \label{eqn:LB_composite_kappa_less_p}
 \mathrm{Compl}({\cal P},({\cal O}_{\cal F},{\cal O}_{\psi}),\epsilon)  \geq M=
 \frac{C(p,\kappa)}{\min\{p,\ln d\}^{2(\kappa-1)}}\left(\frac{L^p}{\lambda^{\kappa}\epsilon^{p-\kappa}}\right)^{\frac{1}{\kappa p+\kappa-p}},
 %C(p,\kappa)\left(\frac{L^p}{\min\{p,\ln d\}^{\frac{p\kappa(\kappa-1)}{p-1}}\lambda^{\kappa}\epsilon^{p-\kappa}}\right)^{\frac{1}{\kappa p+\kappa-p}}.
%\Big(\frac{p-1}{p}\Big)^{\kappa(p-1)} \frac{L^{p}2^{\frac{p}{p-1}(\kappa^2-\kappa p^2+3\kappa p-p-4\kappa+1)}}{\min\{p,
%\ln d\}^{\frac{p\kappa(\kappa-1)}{p-1}}\lambda^{\kappa}\epsilon^{(p-\kappa)}}\right)^{\frac{1}{\kappa p+\kappa-p}}. 
\end{equation}
where we further simplified the bound, noting  that $\frac{p(\kappa-1)(\kappa p-2\kappa+1)}{(p-1)(p\kappa+\kappa-p)}\leq 2(\kappa-1)$.

Now, given the chosen value of $M$, we will verify that Eq.~\eqref{eqn:eta_p>kappa} holds. For this, we note that Eq.~\eqref{eqn:eta_p>kappa} is implied by the following pair of inequalities
\begin{eqnarray}
\lambda &\geq& 
C^{\prime}(p,\kappa)\min\{p,\ln d\}^{(\kappa-1)(2\kappa-1)} \Big(\dfrac{\epsilon^{\kappa}}{LR}\Big)^{\frac{1}{\kappa-1}}
%\dfrac{L^{\frac{1}{\kappa-1}}\epsilon^{\frac{\kappa}{\kappa-1}}}{\min\{p,\ln d\}^{\frac{(p-1)(\kappa p+\kappa-p)-p\kappa^2}{(p-1)(p-\kappa)}}}
\label{eqn:cond_1_LB}\\
\lambda &\geq&  C^{\prime\prime}(p,\kappa) \min\{p,\ln d\}^{5}
\left(\dfrac{\epsilon^{p}}{ L^{(p+1)}R^{\frac{(p-1)(\kappa p+\kappa-p)}{(\kappa-1)}}}\right)^{\frac{\kappa-1}{\kappa p+1-p}}
%\Big(\frac{p}{p-1}\Big)^{\frac{(\kappa-1)(p-1)}{\kappa p-p+1}} \frac{ \min\{p,\ln d\}^{\frac{p^2\kappa(\kappa-1)^2}{(p-1)(p-\kappa)(\kappa p -p+1)}} p^{\frac{\kappa p-p+\kappa}{\kappa p-p+1}} 2^{(\frac{\kappa-1}{p-\kappa})(\frac{(2p-3)(\kappa p+\kappa -p)(p-1)-p^2(\kappa^2-\kappa p^2+3\kappa p-p-4\kappa+1)}{(p-1)(\kappa p+1-p)})}\epsilon^{\frac{p(\kappa-1)}{(\kappa-1)p+1}}}{L^{(\frac{\kappa-1}{p-\kappa})\frac{p(p-1)(\kappa-1)-\kappa}{(p-\kappa)(\kappa p+1-p)}} R^{\frac{(p-1)(\kappa p-p+\kappa)}{\kappa p-p+1}}}. 
\label{eqn:cond_2_LB}
\end{eqnarray}
%with 
%\begin{eqnarray*}
%C^{\prime}(p,\kappa) &=& 2^{\frac{\kappa}{(p-1)(\kappa p+\kappa-p)}\big[-1-\kappa-3p-p(\kappa-1)(\frac{p+\kappa-3}{p-\kappa})\big]}\cdot\Big(\frac{p}{p-1}\Big)^{\frac{(p-1)(1-\kappa^2)}{p\kappa+\kappa-p}}\\
%C^{\prime\prime}(p,\kappa) &=& \Big(2^{2p-3+\frac{p}{(\kappa p+\kappa-p)(p-1)}[(3-\kappa)p-1-p(\kappa-1)(\frac{p+\kappa-3}{p-\kappa})]}p^{\frac{p-\kappa}{\kappa-1}}\Big(\frac{p-1}{p}\Big)^{(p-1)(\kappa+1)}\Big)^{\frac{(\kappa-1)(\kappa p+\kappa-p)}{(p-\kappa)(\kappa p+1-p)}}.
%\end{eqnarray*}
%Notice that 
with $C^{\prime}(p,\kappa), C^{\prime\prime}(p,\kappa)\geq \bar{C}>0$, are bounded below by a universal positive constant. 
%$C^{\prime\prime}(p,\kappa)$ are both bounded away from $0$, uniformly on $1<\kappa\leq 2$, $2\leq p<\infty$, thus
Therefore, there exists a universal constant $\bar{C}>0$ such that if $\lambda$ satisfies Eqs.~\eqref{eqn:cond_1_LB} and \eqref{eqn:cond_2_LB} where $C^{\prime}(p,\kappa), C^{\prime\prime}(p,\kappa)$ are replaced by $\bar{C}$, then the lower complexity bound from Eq.~\eqref{eqn:LB_composite_kappa_less_p} holds.
\qed\end{proof}

\begin{remark}
%Before presenting the proof, we should also emphasize that, as shown in the theorem, the 
Observe that the lower bounds from Theorem~\ref{thm:p-norm-lbs} apply only when $\lambda$ is sufficiently large, which is consistent with the behavior of our algorithm, which for small values of $\lambda$ obtains iteration complexity matching the classical smooth setting %\todo[]{Let me know if you agree with this statement.}
(as if we ignore the uniform convexity of the objective).
\end{remark}

%%%%%%%%%%%%%%%%%%%%%%%%%
%%%%%%%%%%%%%%%%%%%%%%%%%
\section{Applications}\label{sec:apps}%$\ell_p$ and $\mathrm{Sch}_p$ Regression}

We now provide some illustrative applications of the results from Sections~\ref{sec:comp-min} and \ref{sec:small-grad-norm} to different regression problems. In typical applications, the data matrix $\mA$ is assumed to have fewer rows than columns, so that the system $\mA\vx = \vb$, where $\vb$ is the vector of labels, is underdetermined, and one seeks a sparse solution $\vx^{\star}$ that provides a good linear fit between the data and the labels. %First, we show how the results from Section~\ref{sec:comp-min} can be used to obtain a surprisingly fast algorithm for a variant of the classical Dantzig selector problem. We then show how to apply the results from Section~\ref{sec:small-grad-norm} to obtain fast near-linear-time algorithms for $\ell_p$ and $\mathrm{Sch}_p$ regression.

%%%%%%%%%%%%%%%%
\subsection{Elastic Net} 

One of the simplest applications of our framework is to the elastic net regularization, introduced by~\cite{zou2005regularization}. Elastic net regularized problems are of the form:
$$
    \min_{\vx \in \rr^d} f(\vx) + \frac{\lambda_2}{2}\|\vx\|_2^2 + \lambda_1 \|\vx\|_1,
$$
%\todo[inline]{CG: Should be $\|\vx\|_1$ instead of $\|\vx\|_2$.}
i.e., the elastic net regularization combines the lasso and ridge regularizers. Function $f$ is assumed to be $(L, 2)$-weakly smooth (i.e., $L$-smooth) w.r.t.~the Euclidean norm $\|\cdot\|_2$. It is typically chosen as either the linear least squares loss or the logistic loss. 

We can apply results from Section~\ref{sec:comp-min} to this problem for $q = \kappa = 2,$ choosing $\psi(\vx) = \frac{\lambda_2}{2}\|\vx\|_2^2+\lambda_1\|\vx\|_1$ and $\phi(\vx) = \frac{1}{2}\|\vx - \vx_0\|_2^2.$ 
Observe that our algorithm only needs to solve subproblems of the form
$$
    \min_{\vx \in \rr^d}\Big\{\innp{\vz, \vx} + \frac{\lambda''}{2}\|\vx\|_2^2 + \lambda'\|\vx\|_1\Big\},
$$
for fixed vectors $\vz \in \rr^d$ and fixed parameters $\lambda', \lambda''$, which is computationally inexpensive, as the problem under the min is separable.

Applying Theorem~\ref{thm:comp-opt-gap-conv}, the elastic net regularized problems can be solved to any accuracy $\epsilon > 0$ using 
$$
    k = O\bigg(\min\bigg\{\sqrt{\frac{L}{\lambda_2}}\log\bigg(\frac{L\|\vx^{\star} - \vx_0\|_2}{\epsilon}\Big), \; \sqrt{\frac{L\|\vx^{\star} - \vx_0\|_2^2}{\epsilon}}\bigg\}\bigg)
$$
iterations, where $\vx^{\star} \in \rr^d$ is the problem minimizer. We note in passing that an upper bound of the same order can be obtained by the composite accelerated method in \cite{nesterov2013gradient}.
%%%%%%%%%%%%%%%%
\subsection{\markupdelete{Bridge Regression} \markupadd{Risk Minimization with Strongly Convex $\ell_p$-norm Regularization}}

\markupadd{In this section, we argue that the results obtained in this paper are useful for solving certain regularized empirical risk problems and that the particular choice of regularizers proposed here leads to non-asymptotic consistency and generalization bounds.  We emphasize most of the theory used in what follows (particularly regarding regularization, stability and generalization) is classical (e.g., \cite{Bousquet:2002}), and we only provide the necessary tools for completeness.}

\markupadd{Concretely, our main object of interest are population risk minimization problems of the form
\begin{equation}\label{eq:pop-risk-minimization}
    \min_{\vx \in \rr^d} \cL(\vx), \quad \cL(\vx) = \ee_{\vz \sim \mathcal{D}}[\ell(\vx; \vz)],
\end{equation}
where $\mathcal{D}$ is an unknown distribution, individual loss functions $\ell$ are convex, and we are given a set of $n$ i.i.d.~samples $\mathcal{S} = \{\vz_1, \vz_2, \dots, \vz_n\}$ from $\mathcal{D}$. For this problem to be (computationally and statistically) tractable, further assumptions are required, which we state in Assumption~\ref{assump:pop-risk-min}.}

\markupadd{To address \eqref{eq:pop-risk-minimization}, we apply AGD+ to the following regularized empirical version of the problem
\begin{equation}\label{eq:reg-emp-risk-min}
    \min_{\vx \in \rr^d} \cL_{\mathcal{S}}(\vx) + \frac{\lambda}{2}\|\vx\|_p^2,
\end{equation}
where $\cL_{\mathcal{S}}(\vx) = \frac{1}{n}\sum_{i=1}^n \ell(\vx; \vz_i)$ is the empirical risk, $\lambda$ is a regularization parameter (specified in the analysis below), and $p \in (1, 2].$ We let $\vxh$ denote the output of AGD+, invoked with accuracy parameter $\epsilon_n > 0,$ specified later in this subsection. Further, we let $\vxh^*$ denote the unique}\footnote{\markupadd{The solution to \eqref{eq:reg-emp-risk-min} is unique as the problem is strongly convex, due to the regularizer.}} \markupadd{solution to \eqref{eq:reg-emp-risk-min}. We further use $\cx^*_\mathcal{S}$ to denote the set of minimizers of the empirical loss $\cL_{\mathcal{S}}$ and denote the minimum value of the empirical loss $\cL_{\mathcal{S}}$ by $\cL_{\mathcal{S}}^*$. }

\markupadd{First, we provide a justification for the regularization used in \eqref{eq:reg-emp-risk-min}, from an optimization perspective, in the following proposition. In particular, we argue that the utilized regularization ensures that the solution output by AGD+ has $\ell_p$-norm almost as small as the smallest norm among empirical minimizers, $\min_{\vx \in \cx^*_{\mathcal{S}}}\|\vx\|_p$, while requiring only a modest amount of computation. When $p$ is close to 1, we can interpret this property as enforcing sparsity of the output solution, similar to LASSO.}

\markupadd{
\begin{proposition}\label{prop:reg-emp-risk-opt}
Given \eqref{eq:reg-emp-risk-min}, assume that $\cx^*_{\mathcal{S}} = \argmin_{\vx \in \rr^d}\cL_{\mathcal{S}}(\vx)$ is non-empty. Let $\vxh^*$ denote the solution to \eqref{eq:reg-emp-risk-min}. Then:
\begin{equation}\notag
\begin{aligned}
    &\|\vxh^*\|_p \leq \min_{\vx^*\in \cx^*_{\mathcal{S}}} \|\vx^*\|_p, \quad \text{and} \\
    &\cL_{\mathcal{S}}(\vxh^*) - \cL_{\mathcal{S}}^* \leq \frac{\lambda}{2}\min_{\vx^*\in \cx^*_{\mathcal{S}}} \|\vx^*\|_p^2.
\end{aligned}
\end{equation}
As a consequence, if $\vxh$ is the solution output by AGD+ for optimality gap $\epsilon_n$, then 
\begin{equation}\notag
\begin{aligned}
    &\|\vxh\|_p \leq \min_{\vx^*\in \cx^*_{\mathcal{S}}} \|\vx^*\|_p + \sqrt{\frac{2\epsilon_n}{\lambda(p-1)}}, \quad \text{and} \\
    &\cL_{\mathcal{S}}(\vxh) - \cL_{\mathcal{S}}^* \leq \frac{\lambda}{2}\min_{\vx^*\in \cx^*_{\mathcal{S}}} \|\vx^*\|_p^2 + \epsilon_n.
\end{aligned}
\end{equation}
\end{proposition}}
\begin{proof}
\markupadd{The first inequality follows by \eqref{eq:bnd-init-dist}, already proved within the proof of Theorem~\ref{thm:grad-norm}, as $\|\vxh\|_p \leq  \|\vx^*\|_p$ holds for any $\vx^* \in \cx^*_{\mathcal{S}}$. For the second inequality, using the assumption that $\vxh^*$ is the solution to \eqref{eq:reg-emp-risk-min}, we have, for all $\vx^* \in \cx^*_{\mathcal{S}},$
\begin{equation}\label{eq:reg-opt-gap}
    \cls(\vxh^*) + \frac{\lambda}{2}\|\vxh^*\|_p^2 - \Big(\cls(\vx^*) + \frac{\lambda}{2}\|\vx^*\|_p^2\Big) \leq 0.
\end{equation}
Hence, the second inequality follows by rearranging \eqref{eq:reg-opt-gap}, using that $\frac{\lambda}{2}\|\vxh^*\|_p^2 \geq 0,$ and taking the minimum of both sides over $\vx^* \in \cx^*_{\mathcal{S}}.$}

\markupadd{For the third inequality, as $\cls(\vx) + \frac{\lambda}{2}\|\vx\|_p^2$ is $\lambda(p-1)$-strongly convex w.r.t.~$\|\cdot\|_p$ (as $\cls(\vx)$ is convex and $\frac{1}{2}\|\vx\|_p^2$ is $(p-1)$-strongly convex w.r.t.~$\|\cdot\|_p$), $\vxh^*$ minimizes $\cls(\vx) + \frac{\lambda}{2}\|\vx\|_p^2$, and $\vxh$ is an $\epsilon_n$-approximate solution to \eqref{eq:reg-emp-risk-min}, we have
\begin{equation}\notag
    \frac{\lambda(p-1)}{2}\|\vxh - \vxh^*\|_p^2 \leq f(\vxh)  - f(\vxh^*) \leq \epsilon_n.
\end{equation}
Hence, we have $\|\vxh - \vxh^*\|_p \leq \sqrt{\frac{2 \epsilon_n}{\lambda(p-1)}}.$ The claimed inequality now follows using triangle inequality and the first part of the proposition, since
\begin{equation}\notag
    \|\vxh\|_p \leq \|\vxh^*\|_p + \|\vxh - \vxh^*\|_p \leq \min_{\vx^*\in \cx^*_{\mathcal{S}}} \|\vx^*\|_p + \sqrt{\frac{2\epsilon_n}{\lambda(p-1)}}.
\end{equation}}

\markupadd{For the final part, let $\vx^* \in \argmin_{\vx \in \cx^*_{\cS}} \|\vx\|_p.$ Then
\begin{align*}
    \cls(\vxh) - \cls^* =\; & \cls(\vxh) + \frac{\lambda}{2}\|\vxh\|_p^2 - \cls^* - \frac{\lambda}{2}\|\vx^*\|_p^2 - \frac{\lambda}{2}\|\vxh\|_p^2 + \frac{\lambda}{2}\|\vx^*\|_p^2\\
    \leq \; & \cls(\vxh) + \frac{\lambda}{2}\|\vxh\|_p^2 - \cls(\vxh^*) - \frac{\lambda}{2}\|\vxh^*\|_p^2 + \frac{\lambda}{2}\|\vx^*\|_p^2\\
    \leq \; & \epsilon_n + \frac{\lambda}{2}\|\vx^*\|_p^2,
\end{align*}
where the first inequality follows from $\vxh^*$ being the minimizer of \eqref{eq:reg-emp-risk-min} and $\frac{\lambda}{2}\|\vxh\|_p^2 \geq 0,$ and the last inequality is by the definition of $\epsilon_n.$}
\qed\end{proof}

\markupadd{Note that Proposition~\ref{prop:reg-emp-risk-opt} allows us to treat the empirical problem~\eqref{eq:reg-emp-risk-min} as if it were a constrained optimization problem, with constraint set $\{\vx \in \rr^d: \|\vx\|_p \leq \min_{\vx^*\in \cx^*_{\mathcal{S}}} \|\vx^*\|_p + \sqrt{\frac{2\epsilon_n}{\lambda(p-1)}}\}$, as the predictor $\vxh$ is guaranteed to lie in this set.}

\markupadd{We now specify the assumptions used for obtaining statistical and computational guarantees.
\begin{assumptions}\label{assump:pop-risk-min}
Given the population risk minimization problem \eqref{eq:pop-risk-minimization}, the following all hold
\begin{enumerate}[label=(C.\arabic*), leftmargin=1cm]
    \item For any set $\mathcal{S} = \{\vz_1, \dots, \vz_n\}$ of empirical samples, the set of empirical minimizers $\cx^*_{\mathcal{S}}$ is non-empty and  $\min_{\vx \in \cx^*_{\mathcal{S}}}\|\vx\|_p \leq B$, where $B < \infty$; \label{assump-item:bounded-emp-min}
    \item The loss function $\ell$ is differentiable and satisfies that for any two samples $\vz, \vz'$ drawn from $\mathcal{D}$ and $\vx$ such that $\|\vx\|_p \leq B,$ we have $\|\nabla \ell(\vx; \vz) - \nabla \ell(\vx; \vz')\|_* \leq M,$ where $M < \infty;$ \label{assump-item:RO-grad-loss}
    \item The loss function $\ell$ is $L$-smooth for some $L < \infty.$ \label{assump-item:smoothness}
    \item For any $\vx$ such that $\|\vx\|_p \leq 2B,$ $\ee_{\vz \sim \mathcal{D}}[|\ell(\vx; \vz) - \cL(\vx)|] \leq G.$ \label{assump-item:bnded-var-fn}
\end{enumerate}
\end{assumptions}}

\markupadd{Assumption~\ref{assump-item:bounded-emp-min} ensures learnability of the population risk minimization problem, and some variant of it is necessary. It is usually enforced via a stronger condition that the minimization is performed over a bounded convex set. Assumption~\ref{assump-item:RO-grad-loss} is looser than the assumption about Lipschitz-continuity of $\ell$ that is typically enforced in the literature on stability and generalization. Further, due to Assumptions~\ref{assump-item:bounded-emp-min} and \ref{assump-item:smoothness} and the criterion $\|\vx\|_p \leq B$ in its statement, Assumption~\ref{assump-item:RO-grad-loss} holds as in this case $\|\nabla \ell(\vx,\vz)\|_* \leq M$ is bounded (by $2LB$). Assumption~\ref{assump-item:smoothness} is made for computational tractability via the complexity bound of AGD+, and can be replaced with an assumption that $\ell$ is $(\kappa, L)$-weakly smooth for any $\kappa \in [1, 2],$ with all of the analysis still being applicable and just by invoking the appropriate complexity bound for this class (observe that constant $M$ in Assumption~\ref{assump-item:RO-grad-loss} can still be bounded by $2LB^{\kappa - 1}$ in this case). However, for concreteness and simplicity of exposition, we carry out the analysis under the assumption that $\ell$ is $L$-smooth. Finally, Assumption~\ref{assump-item:bnded-var-fn} is made to be able to apply \cite[Theorem~8]{shalev2010learnability}, which relates uniform replace one stability (as in Lemma~\ref{lemma:uniform-RO}) to consistency and generalization. Note that this assumption can be omitted, since it is automatically satisfied for any non-degenerate problem, as $\ell$ is a continuous function and as such bounded on the compact domain $\|\vx\|_p \leq 2B$. In particular, by Assumption~\ref{assump-item:bounded-emp-min}, $\ell$ has at least one minimizer $\vx^*$ that belongs to the $\ell_p$-ball $\|\vx\|_p \leq B.$ As $\ell$ is $L$-smooth (by Assumption~\ref{assump-item:smoothness}), $\sup_{\vx: \|\vx\|_p \leq 2B}\{\ell(\vx) - \ell(\vx^*)\} \leq \sup_{\vx: \|\vx\|_p \leq 2B}  \frac{L}{2}\|\vx - \vx^*\|_p^2 \leq \frac{9 LB^2}{2}.$ Thus, $G \leq \frac{9 LB^2}{2}.$}

\markupadd{The key property that allows us to prove consistency and generalization bounds is uniform replace one (RO) stability. For completeness, we first define uniform RO stability, consistency, and generalization, and then move on to proving the claimed bounds. The definitions provided below can be found in, e.g.,~\cite{shalev2010learnability}.}

\markupadd{
\begin{definition}\label{def:RO-uniformly-stable}
Let $A$ be a rule that given a sample $\mathcal{S} = \{\vz_1, \dots, \vz_n\}$ and problem \eqref{eq:pop-risk-minimization} outputs a predictor $A(\mathcal{S}).$ $A$ is said to be uniform-RO stable with rate $\epsilon_{\mathrm{st}}(n)$, if for all possible sets $\mathcal{S}^{(i)} = \{\vz_1, \dots, \vz_{i-1}, \vz_i', \vz_{i+1}, \dots \vz_n\}$ that replace the $i^{\mathrm{th}}$ sample $\vz_i$ by some $\vz_i'$ and for any $\vz'$ from the support of $\mathcal{D},$ we have
$$
\frac{1}{n}\sum_{i=1}^n |\ell(A(\mathcal{S}); \vz') - \ell(A(\mathcal{S}^{(i)}); \vz')| \leq  \epsilon_{\mathrm{st}}(n). 
$$
\end{definition}}

\markupadd{\begin{definition}
A learning rule $A$ is said to be consistent with rate $\epsilon_{\mathrm{cons}}(n)$ under distribution $\mathcal{D}$ if for all $n \geq 1,$
$$
    \ee_{\mathcal{S}\sim \mathcal{D}^n} [\cL(A(\mathcal{S})) - \inf_{\vx \in \rr^d}\cL(\vx)] \leq \epsilon_{\mathrm{cons}}(n).
$$
A learning problem is learnable if there exists a learning rule $A$ that is consistent with rate $\epsilon_{\mathrm{st}}(n)$ under any distribution $\mathcal{D}$ and where $\epsilon_{\mathrm{st}}(n) \stackrel{n \rightarrow\infty}{\longrightarrow} 0.$ If it exists, such a rule is then called a universally consistent learning rule.
\end{definition}
}

\markupadd{
\begin{definition}
A rule $A$ is said to generalize with rate $\epsilon_{\mathrm{gen}}(n)$ under distribution $\mathcal{D}$ if for all $n \geq 1,$
$$
    \ee_{\mathcal{S}\sim \mathcal{D}^n}[|\cL(A(\mathcal{S})) - \cls(A(\mathcal{S}))|] \leq \epsilon_{\mathrm{gen}}(n).
$$
A rule is said to universally generalize with rate $\epsilon_{\mathrm{gen}}(n)$ if it generalizes with rate $\epsilon_{\mathrm{gen}}(n)$ irrespective of the distribution over the given support. 
\end{definition}}

\markupadd{To prove the consistency and generalization rates, we prove the following lemma that certifies uniform RO stability of the outputs of AGD+ (under a suitable choice of $\epsilon_n$ and $\lambda$). We then obtain the consistency and generalization rates as an application of \cite[Theorem~8]{shalev2010learnability}.}

\markupadd{
\begin{lemma}\label{lemma:uniform-RO}
Given the population risk minimization problem \eqref{eq:pop-risk-minimization}, let $\vxh_{\mathcal{S}}$ be an $\epsilon_n$-approximate solution for the regularized empirical problem formulation in \eqref{eq:reg-emp-risk-min}. Then $\vxh_{\mathcal{S}}$ is a uniform RO stable learning rule with rate
\begin{equation}
    \epsilon_{\mathrm{st}}(n) = L \Big(2B + \sqrt{\frac{2\epsilon_n}{\lambda(p-1)}}\Big) \Big(\frac{M}{n \lambda(p-1)} + 2 \sqrt{\frac{2 \epsilon_n}{\lambda(p-1)}}\Big),
\end{equation}
under any distribution that satisfies Assumption~\ref{assump-item:bounded-emp-min} and for any loss function $\ell$ that satisfies Assumptions~\ref{assump-item:RO-grad-loss} and \ref{assump-item:smoothness}.  
\end{lemma}}
\begin{proof}
\markupadd{Let $\mathcal{S} = \{\vz_1, \dots, \vz_n\}\stackrel{i.i.d.}{\sim} \mathcal{D}.$ Let $\mathcal{S}^{(i)}$ be any set obtained from $\mathcal{S}$ by replacing the $i^{\mathrm{th}}$ element ($\vz_i$) by an independent sample $\vz_i'\sim {\cal D}$. Let
$
    \vxh^*_{\mathcal{S}^{(i)}}
$ 
be the minimum $\ell_p$-norm solution to \eqref{eq:reg-emp-risk-min} with sample $\mathcal{S}^{(i)}$ and $\vxh_{\mathcal{S}^{(i)}}$ be the approximate solution to the same problem output by AGD+. Similarly, let $
    \vxh^*_{\mathcal{S}}
$ 
be the minimum $\ell_p$-norm solution to \eqref{eq:reg-emp-risk-min} with sample $\mathcal{S}$ and $\vxh_{\mathcal{S}}$ be the approximate solution to the same problem output by AGD+. To prove the lemma, we first bound $\|\vxh^*_{\cS} - \vxh^*_{\cS^{(i)}}\|_p$ and then use Proposition~\ref{prop:reg-emp-risk-opt} with smoothness of $\ell$ to conclude that $\vxh_{\cS}$ is uniform RO stable.}

\markupadd{As $\vxh^*_{\cS}$ minimizes $\cL_{\cS}(\vx) + \frac{\lambda}{2}\|\vx\|_p^2,$ we have $\nabla \cL_{\cS}(\vxh^*_{\cS}) + \nabla \big(\frac{1}{2}\|\vxh^*_{\cS}\|_p^2\big) = \zeros$, and, similarly,  $\nabla \cL_{\cS^{(i)}}(\vxh^*_{\cS^{(i)}}) + \nabla \big(\frac{1}{2}\|\vxh^*_{\cS^{(i)}}\|_p^2\big) = \zeros$. Further, as $\cL_{\cS}(\vx) + \frac{\lambda}{2}\|\vx\|_p^2$ is $\lambda(p-1)$-strongly convex w.r.t.~$\|\cdot\|_p,$
\begin{align}
    &\innp{\nabla \cL_{\cS}(\vxh^*_{\cS}) - \nabla \cL_{\cS}(\vxh^*_{\cS^{(i)}}), \vxh^*_{\cS} -  \vxh^*_{\cS^{(i)}}}\notag\\ 
    \geq\;& \lambda(p-1)\|\vxh^*_{\cS} -  \vxh^*_{\cS^{(i)}}\|_p^2 - \innp{\nabla \Big(\frac{1}{2}\|\vxh^*_{\cS}\|_p^2\Big) - \nabla \Big(\frac{1}{2}\|\vxh^*_{\cS^{(i)}}\|_p^2\Big), \vxh^*_{\cS} -  \vxh^*_{\cS^{(i)}}}\notag\\
    =\;& \lambda(p-1)\|\vxh^*_{\cS} -  \vxh^*_{\cS^{(i)}}\|_p^2 + \innp{\nabla \cL_{\cS}(\vxh^*_{\cS}) - \nabla \cL_{\cS^{(i)}}(\vxh^*_{\cS^{(i)}}), \vxh^*_{\cS} -  \vxh^*_{\cS^{(i)}}}\label{eq:stability-left-bnd}
\end{align}
Further, observe that by definition of $\cL_{\cS},$ we have $\nabla \cL_{\cS}(\vxh^*_{\cS}) - \nabla \cL_{\cS}(\vxh^*_{\cS^{(i)}}) = \nabla \cL_{\cS}(\vxh^*_{\cS}) - \nabla \cL_{\cS^{(i)}}(\vxh^*_{\cS^{(i)}}) + \frac{1}{n}(\nabla\ell(\vxh^*_{\cS^{(i)}}, \vz_i') - \nabla\ell(\vxh^*_{\cS^{(i)}}, \vz_i))$. Hence:
\begin{equation}\label{eq:stability-right-bnd}
    \begin{aligned}
         &\innp{\nabla \cL_{\cS}(\vxh^*_{\cS}) - \nabla \cL_{\cS}(\vxh^*_{\cS^{(i)}}), \vxh^*_{\cS} -  \vxh^*_{\cS^{(i)}}} \\
         \leq\; & \innp{\nabla \cL_{\cS}(\vxh^*_{\cS}) - \nabla \cL_{\cS^{(i)}}(\vxh^*_{\cS^{(i)}}), \vxh^*_{\cS} -  \vxh^*_{\cS^{(i)}}}\\
         &+ \frac{1}{n}\innp{\nabla\ell(\vxh^*_{\cS^{(i)}}, \vz_i') - \nabla\ell(\vxh^*_{\cS^{(i)}}, \vz_i), \vxh^*_{\cS} -  \vxh^*_{\cS^{(i)}}}
    \end{aligned}
\end{equation}
Hence, combining \eqref{eq:stability-left-bnd} and \eqref{eq:stability-right-bnd}, we have
\begin{align}
    \lambda(p-1) \|\vxh^*_{\cS} -  \vxh^*_{\cS^{(i)}}\|_p^2 &\leq \frac{1}{n} \innp{\nabla\ell(\vxh^*_{\cS^{(i)}}, \vz_i') - \nabla\ell(\vxh^*_{\cS^{(i)}}, \vz_i), \vxh^*_{\cS} -  \vxh^*_{\cS^{(i)}}}\notag\\
    &\leq \frac{M}{n}\|\vxh^*_{\cS} -  \vxh^*_{\cS^{(i)}}\|_p.\notag
\end{align}
Thus, we conclude that
\begin{equation}\notag
    \|\vxh^*_{\cS} -  \vxh^*_{\cS^{(i)}}\|_p \leq \frac{M}{n \lambda(p-1)}, 
\end{equation}
and, consequently, using strong convexity and the guarantee of AGD+ as in the proof of Proposition~\ref{prop:reg-emp-risk-opt}, we have
\begin{equation}\label{eq:bnd-dist-predictors}
\begin{aligned}
    \|\vxh_{\cS} -  \vxh_{\cS^{(i)}}\|_p &\leq \|\vxh^*_{\cS} -  \vxh^*_{\cS^{(i)}}\|_p + \|\vxh_{\cS} -  \vxh^*_{\cS}\|_p + \|\vxh_{\cS^{(i)}} -  \vxh^*_{\cS^{(i)}}\|_p \\
    &\leq \frac{M}{n \lambda(p-1)} + 2 \sqrt{\frac{2 \epsilon_n}{\lambda(p-1)}}.
\end{aligned}
\end{equation}
Finally, let $\vx^*_1$ be the minimizer of $\ell(\cdot; \vz')$ with the minimum $\ell_p$ norm. By Assumption~\ref{assump-item:bounded-emp-min}, $\|\vx^*_1\|_p \leq B.$ Hence, using smoothness of $\ell, $ we have 
\begin{align*}
    |\ell(\vxh_{\cS}; \vz') - \ell(\vxh_{\cS^{(i)}}; \vz')| &\leq |\innp{\nabla \ell(\vxh_{\cS}; \vz'), \vxh_{\cS} - \vxh_{\cS^{(i)}}}|\\
    &\leq \|\nabla \ell(\vxh_{\cS}; \vz') - \nabla \ell(\vx^*_1; \vz')\|_{p^*}\|\vxh_{\cS} - \vxh_{\cS^{(i)}}\|_p\\
    &\leq L \|\vxh_{\cS} - \vx^*_1\|_p \|\vxh_{\cS} - \vxh_{\cS^{(i)}}\|_p\\
    &\leq L(\|\vxh_{\cS}\|_p + \|\vx^*_1\|_p)\|\vxh_{\cS} - \vxh_{\cS^{(i)}}\|_p\\
    &\leq L \Big(2B + \sqrt{\frac{2\epsilon_n}{\lambda(p-1)}}\Big)\|\vxh_{\cS} - \vxh_{\cS^{(i)}}\|_p\\
    &\leq L \Big(2B + \sqrt{\frac{2\epsilon_n}{\lambda(p-1)}}\Big) \Big(\frac{M}{n \lambda(p-1)} + 2 \sqrt{\frac{2 \epsilon_n}{\lambda(p-1)}}\Big),
\end{align*}
where the second to last inequality uses the third part of Proposition~\ref{prop:reg-emp-risk-opt} and the last inequality uses \eqref{eq:bnd-dist-predictors}.}
\end{proof}

\markupadd{We are now ready to state and prove computational and statistical guarantees for a learning rule $\vxh_{\cS}$ defined as an $\epsilon_n$-approximate solution to \eqref{eq:reg-emp-risk-min}, which can be computed using AGD+.}

\markupadd{
\begin{theorem}\label{thm:reg-emp-risk-min}
Given the population risk minimization problem \eqref{eq:pop-risk-minimization} that satisfies Assumptions~\ref{assump-item:bounded-emp-min}--\ref{assump-item:bnded-var-fn}, let $\vxh_{\mathcal{S}}$ be an $\epsilon_n$-approximate solution for the regularized empirical problem formulation in \eqref{eq:reg-emp-risk-min}, where $\epsilon_n$ and $\lambda$ are defined by:
\begin{equation}
    \epsilon_n = \min\Big\{ \frac{B^2 \lambda(p-1)}{2},\, \frac{M^2}{8n^2 \lambda(p-1)} \Big\},\; \lambda = 2 \sqrt{3}\sqrt{\frac{LM}{Bnp(p-1)}}.
\end{equation}
Then $\vxh_{\mathcal{S}}$ can be computed with 
\begin{equation}
\begin{aligned}
    k &= O\bigg(\sqrt{\frac{L}{\lambda}}\log\Big(\frac{LB}{\epsilon_n}\Big)\bigg)\\
    &= O \bigg(\Big(\frac{nBL}{M}\Big)^{1/4}\log\Big(\frac{nBL}{p-1}\Big)\bigg)
\end{aligned}
\end{equation}
iterations of AGD+ and it is consistent and generalizes under $\mathcal{D}$ with rates
\begin{equation}\notag
    \begin{aligned}
        \epsilon_{\mathrm{cons}}(n) = 2\sqrt{3}\sqrt{\frac{p}{p-1}\cdot \frac{LM}{n}}B^{3/2}\\
        \epsilon_{\mathrm{gen}}(n) = 2\sqrt{3}\sqrt{\frac{p}{p-1}\cdot \frac{LM}{n}}B^{3/2} + \frac{2G}{\sqrt{n}}.
    \end{aligned}
\end{equation}
\end{theorem}}
\begin{proof}
\markupadd{The bound on the number of iterations of AGD+ required to compute $\vxh_{\cS}$ follows as a direct application of Theorem~\ref{thm:comp-opt-gap-conv}.}

\markupadd{For the consistency and generalization rates, we apply \cite[Theorem~8]{shalev2010learnability}, by which
\begin{align}
    \epsilon_{\mathrm{cons}}(n) &\leq \epsilon_{\mathrm{st}}(n) + \epsilon_{\mathrm{erm}}(n),\notag\\
    \epsilon_{\mathrm{gen}}(n) &\leq \epsilon_{\mathrm{st}}(n) + \epsilon_{\mathrm{erm}}(n) + \frac{2G}{\sqrt{n}},\notag
\end{align}
where
\begin{equation}
    \epsilon_{\mathrm{erm}}(n) := \cls(\vxh) - \cls^* \leq \frac{\lambda}{2}B^2 + \epsilon_n\notag
\end{equation}
(by Proposition~\ref{prop:reg-emp-risk-opt}) and $\epsilon_{\mathrm{st}}(n)$ was bounded in Lemma~\ref{lemma:uniform-RO}. The claimed bounds now follow after plugging in the choice of $\lambda, \epsilon_n$ from the theorem statement, and simplifying.}
\qed\end{proof}

\markupadd{A few remarks are in order regarding the practical use of the proposed learning rule based on \eqref{eq:reg-emp-risk-min}. In Theorem~\ref{thm:reg-emp-risk-min}, we choose $\epsilon_n$ and $\lambda$ according to the problem parameters specified in Assumptions~\ref{assump-item:bounded-emp-min}-\ref{assump-item:bnded-var-fn}. However, parameters $B, M, L$ are usually not available at the input. Further, if the minimum loss $\cls^*$ is not known, we cannot use $f(\vy_k) - f(\vx^*) \leq \epsilon_n$ as a stopping criterion in AGD+. (Recall that, as discussed in Section~\ref{sec:comp-considerations}, knowledge of $L$ is not required for running AGD+, as $L$ can be adaptively estimated.) In such situations, it suffices to let $\lambda = \Theta\Big(\sqrt{\frac{1}{np(p-1)}}\Big)$,  $\epsilon_n = \Theta \big(\frac{p-1}{n^2}\big),$ and run AGD+ until the number of iterations reaches the estimate $C\sqrt{\frac{L}{\lambda}\log(nL/(p-1))}$ for some constant $C$ and the adaptively estimated value of $L.$ It is simple to verify that under this choice, $\epsilon_{\mathrm{cons}}(n)$ and $\epsilon_{\mathrm{gen}}(n)$ grow as $\frac{1}{\sqrt{n(p-1)}}$ as functions of $n$ and $p$ (albeit with a worse, though still polynomial, dependence on the remaining parameters).}

\markupadd{Finally, when $p = 1 + \frac{c}{\log(d)}$ for $c >0,$ we have $\|\cdot\|_p\leq \|\cdot\|_1 \leq (1 + c)\|\cdot\|_p.$ Hence, the proposed regularized empirical risk minimization approach can be used as an alternative to LASSO under this choice of $p$, based on the discussion preceding Proposition~\ref{prop:reg-emp-risk-opt}. Note that, as shown in \cite[Section~4.3]{shalev2010learnability}, \cite[Section~3.3]{feldman2016generalization}, for general loss functions and under the assumptions from this section (Assumptions~\ref{assump:pop-risk-min}), the same consistency and generalization rates cannot be obtained using LASSO (regardless of whether it is formulated using $\ell_1$ regularization or via bounding the optimization problem using $\ell_1$ ball constraints).}

\markupdelete{Bridge regression problems were originally introduced by~\cite{frank1993statistical}, and are defined by
\begin{equation}\label{eq:bridge-reg}
    \begin{aligned}
        \min_{\substack{\vx \in \rr^d:\\ \|\vx\|_p \leq t}}&\; \frac{1}{2}\|\mA \vx - \vb\|_2^2
    \end{aligned}
\end{equation}
where $t$ is a positive scalar, $p \in [1, 2],$ $\mA$ is the matrix of observations, and $\vb$ is the vector of labels. In particular, for $p = 1,$ the problem reduces to lasso, while for $p = 2$ we recover ridge regression.}

\markupdelete{Bridge regression has traditionally been used either as an interpolation between lasso and ridge regression, or to model Bayesian priors with the exponential power distribution (see~\cite{park2008bayesian} and~\cite[Section~3.4.3]{hastie2009elements}). By reformulation with the introducion of a parameter $\lambda>0$, the problem is often posed in the equivalent (due to Lagrangian duality) penalized (or regularized) form:
$$
    \min_{\vx \in \rr^d} \Big\{\frac{1}{2}\|\mA \vx - \vb\|_2^2 + \frac{\lambda}{p}\|\vx\|_p^p\Big\}.
$$
Writing the regularizer as $\frac{1}{p}\|\vx\|_p^p$ is typically chosen due to its separable form. However, using different parametrization, the problem from Eq.~\eqref{eq:bridge-reg} is also equivalent to 
\begin{equation}\label{eq:bridge-reg-1}
    \min_{\vx \in \rr^d} \Big\{\frac{1}{2}\|\mA \vx - \vb\|_2^2 + \frac{\lambda}{2}\|\vx\|_p^2\Big\},
\end{equation}
which is more convenient for the application of our results, as $\frac{1}{2}\|\vx\|_p^2$ is $(p-1)$-strongly convex w.r.t.~$\|\cdot\|_p$. }

\markupdelete{Further, looking at the gradient $\nabla f(\vx) = \mA^T\mA \vx - \mA^T \vb$ of $f(\vx) = \frac{1}{2}\|\mA \vx - \vb\|_2^2$, it is not hard to argue that $f(\vx)$ is $L_p$-smooth w.r.t.~$\|\cdot\|_p,$ where $L_p = \|\mA^T\mA\|_{p \to p_{\ast}} = \sup_{\vx \in \rr^d: \|\vx\|_p \neq 0}\frac{\|\mA^T\mA\vx\|_{p_{\ast}}}{\|\vx\|_p}$. Namely, this follows as}
  \markupdelete{\begin{equation*}
  \|\nabla f(\vx) - \nabla f(\vy)\|_{p_{\ast}} = \|\mA^T\mA(\vx - \vy)\|_{p_{\ast}} \leq \|\mA^T\mA\|_{p \to p_{\ast}}\|\vx - \vy\|_p.
\end{equation*}}
\markupdelete{An interesting feature of the formulation in Eq.~\eqref{eq:bridge-reg-1} is that it implies a certain trade-off between the $p_{\ast}$-fit of the data and the $p$-norm of the regressor. Namely, if $\vxb^{\star}$ solves the problem from Eq.~\eqref{eq:bridge-reg-1}, then}
  \markupdelete{\begin{equation}\label{eq:grad-of-bridge}
  \|\mA^T(\mA\vxb^{\star} - \vb)\|_{p_{\ast}} = \lambda \|\vxb^{\star}\|_p. 
\end{equation}}
\markupdelete{This simply follows by setting the gradient of $\frac{1}{2}\|\mA \vx - \vb\|_2^2 + \frac{1}{2}\|\vx\|_p^2$ to zero, and using that $\big\| \nabla \big(\frac{1}{2}\|\vx\|_p^2\big)\big\|_{p_{\ast}} = \|\vx\|_p,$ $\forall \vx \in \rr^d$ (see Proposition~\ref{prop:duality-map-of-p}). }

\markupdelete{More recently, related problems of the form
$$
    \min_{\vx \in \rr^d} \Big\{\sqrt{\ell(\vx, \mA, \vb)} + \lambda'\|\vx\|_p\Big\},
$$
where $\ell(\vx, \mA, \vb)$ is a more general loss function, have been used in distributionally robust optimization (see~\cite{blanchet2019robust}). Again, a different parametrization of the same problem leads to the equivalent form
\begin{equation}
    \min_{\vx \in \rr^d} \Big\{{\ell(\vx, \mA, \vb)} + \frac{\lambda}{2}\|\vx\|_p^2\Big\}.
\end{equation}
{Consider the complementary composite formulation where $f(\vx)=\ell(\vx, \mA, \vb)$ and $\psi(\vx)=\frac{\lambda}{2}\|\vx\|_p^2$. Then our results can be applied as long as $\ell(\vx, \mA, \vb)$ is $L_p$-smooth w.r.t.~$\|\cdot\|_p$.}\footnote{Note that, by the inequalities relating $\ell_p$-norms, any function that is $L$-smooth w.r.t.~\mbox{$\|\cdot\|_2$}, is also $L$-smooth w.r.t.~$\|\cdot\|_p$ for $p \in [1, 2]$. That is, for $p \in [1, 2],$ the smoothness parameter w.r.t.~$\|\cdot\|_p$ can only be lower than the smoothness parameter w.r.t.~$\|\cdot\|_2$, often being significantly lower.}}

\markupdelete{A direct application of our result from  Theorem~\ref{thm:comp-opt-gap-conv} tells us that we can approximate the problem from Eq.~\eqref{eq:bridge-reg-1} with accuracy $\epsilon > 0$ using 
\begin{equation}\label{eq:k-bridge}
    k = O\bigg(\min\bigg\{\sqrt{\frac{L_p}{ \lambda(p-1) }}\log\Big(\frac{L_p \|\vxb^{\star} - \vx_0\|_p}{\epsilon}\Big),\; \sqrt{\frac{L_p\|\vxb^{\star} - \vx_0\|_p^2}{\epsilon}}\bigg\} \bigg)
\end{equation}
iterations of Generalized AGD+ from Eq.~\eqref{eq:mod-agd+}. We are not aware of any other results that provide such a guarantee, for $p \neq 2$. }

\markupdelete{Further, using Corollary~\ref{cor:comp-opt-dist-to-opt}, we get that within the same number of iterations the output point $\vy_k$ of the algorithm satisfies $\|\vy_k - \vxb^{\star}\|_p \leq \sqrt{\frac{2\epsilon}{\lambda(p-1)}}.$ Additionally, for quadratic losses, using triangle inequality and Eq.~\eqref{eq:grad-of-bridge}, we have the following ``goodness of fit'' guarantee
\begin{align*}
    \|\mA^T(\mA\vy_k - \vb)\|_{p_{\ast}} \leq \|\mA^T \mA (\vy_k - \vxb^{\star})\|_{p_{\ast}} + \lambda\|\vxb^{\star}\|_p \leq L_p \sqrt{\frac{2\epsilon}{\lambda(p-1)}} + \lambda\|\vxb^{\star}\|_p.
\end{align*}}

\markupdelete{Finally, note that it is possible to apply our algorithm to $\ell_1$ regularized problems (lasso), applying results from Theorem~\ref{thm:comp-opt-gap-conv} with $\psi(\vx) = \lambda \|\vx\|_1$ and $\phi(\vx) = \frac{1}{2}\|\vx - \vx_0\|_2^2.$ In this case, as $\psi$ is not strongly convex, the resulting bound is $k = O\Big(\sqrt{\frac{L_2 \|\vxb^{\star} - \vx_0\|_2^2}{\epsilon}}\Big)$, which matches the iteration complexity of FISTA~\cite{beck2009fast} and composite minimization framework of~\cite{nesterov2013gradient}.} %Alternatively, if we let $\phi(\vx) = \frac{\log(d)}{2}\|\vx - \vx_0\|_q^2$ for $q = 1 + \frac{1}{\log(d)},$ which is $1$-strongly convex w.r.t.~$\|\cdot\|_1,$ the resulting bound is then $k = O\Big(\sqrt{\frac{L_1 \phi(\vxb^{\star})}{\epsilon}}\Big) = O\Big(\sqrt{\frac{\log(d) L_1 \|\vxb^{\star} - \vx_0\|_1^2 }{\epsilon}}\Big)$. For the quadratic loss function, $L_1 = \|\mA^T\mA\|_{1\to\infty} = \max_{1\leq i, j \leq d}(\mA^T\mA)_{ij}$. Using the well-known Gershgorin circle theorem, it follows that $L_1$ can be up to a factor of $d$ smaller than $L_2 = \|\mA^T\mA\|_{2\to2} = \lambda_{\max}(\mA^T\mA),$ where $\lambda_{\max}$ denotes the largest eigenvalue of a matrix. Thus, the second bound is generally tighter by a factor of $\widetilde{O}(\sqrt{d}).$ 

%note that, as $\|\cdot\|_1$ is approximated by $\|\cdot\|_p$ to a multiplicative factor $1 + \epsilon$ for $p = 1 + \Theta(\frac{\epsilon}{\log(d)})$, if our results are applied to lasso problems using $\frac{\lambda}{p}\|\cdot\|_p^2$ as the regularizer (or even using the $\ell_1$ regularizer and $\phi(\vx) = \frac{1}{p}\|\vx - \vx_0\|_p^2$ with $p = 1 + \frac{1}{\log(d)}$, which is $\Theta(\frac{1}{\log(d)})$-strongly convex w.r.t.~$\|\cdot\|_1$), we recover the same iteration complexity as that of FISTA~\cite{beck2009fast}, which is unimprovable. 

%%%%%%%%%%%%%%%%
\markupdelete{\subsection{Dantzig Selector Problem}\label{sec:dantzig-selector}}

\markupdelete{Dantzig selector problem, introduced by %Cand\'{e}s and Tao~
\cite{candes2007dantzig}, consists in solving problems of the form
$$
    \min_{\substack{\vx \in \rr^d:\\ \|\vx\|_1 \leq t}}\|\mA^T(\mA\vx - \vb)\|_{\infty}, \quad\text{ or, equivalently }\quad \min_{\substack{\vx \in \rr^d:\\ \|\mA^T(\mA\vx - \vb)\|_{\infty} \leq t}} \|\vx\|_1,
$$
where $t$ is some positive parameter. }

\markupdelete{Similar to other regression problems described in this section, Dantzig selector problem can be considered in its unconstrained, regularized form. One variant of the problem that can be addressed with our algorithm is
\begin{equation}\label{eq:dantzig-selector}
    \min_{\vx \in \rr^d} \frac{1}{2}\|\mA^T(\mA\vx - \vb)\|_{p_{\ast}}^2 + \frac{\lambda}{2} \|\vx\|_p^2, 
\end{equation}
where $p$ is chosen sufficiently close to one so that $\|\cdot\|_p$ closely approximates $\|\cdot\|_{1}$ and $\|\cdot\|_{p_{\ast}}$ closely approximates $\|\cdot\|_{\infty}$, where $\frac{1}{p}+\frac{1}{p_{\ast}} = 1$. In particular, when %$p =1+ \Theta( \frac{\epsilon}{\log(d)}),
$p^{\ast}=[\log d]/\ln(1+\epsilon)$, %\todo[]{Modified slightly} 
we have that $(1-\epsilon) \|\vx\|_{1} \leq \|\vx\|_p \leq \|\vx\|_{1}$ and $\|\vx\|_{\infty} \leq \|\vx\|_p \leq (1+\epsilon)\|\vx\|_{\infty},$ $\forall \vx \in \rr^d.$}

\markupdelete{As discussed at the beginning of Section~\ref{sec:small-grad-norm}, in this case, $\psi(\vx) = \frac{\lambda}{2} \|\vx\|_p^2$ is $\lambda(p-1) = \Theta( \frac{\lambda\epsilon}{\log(d)})$-strongly convex w.r.t.~$\|\cdot\|_p$ and, by the relationship between norms, is also strongly convex w.r.t.~$\|\cdot\|_1$ with the strong convexity constant of the same order. Further, $f(\vx) = \frac{1}{2}\|\mA^T(\mA\vx - \vb)\|_{p_{\ast}}^2$ can be shown to be $L_1$-smooth w.r.t.~$\|\cdot\|_1$, for $L_1 = (1+\epsilon)(p_{\ast}-1) A_{\max} = \Theta(\frac{\log{d}}{\epsilon}A_{\max}),$ where $A_{\max} = \max_{1\leq i, j \leq d}|(\mA^T\mA)_{ij}|.$ This can be done as follows. Using that $\frac{1}{2}\|\cdot\|_{p_{\ast}}^2$ is $(p_{\ast}-1)$-smooth w.r.t.~$\|\cdot\|_{p_{\ast}}$ (as $p > 2$), we have, $\forall \vx, \vy \in \rr^d,$
\begin{align*}
    \|\nabla f(\vx) - \nabla f(\vy)\|_{\infty} &\leq \|\nabla f(\vx) - \nabla f(\vy)\|_{p}\\
    &\leq (p_{\ast}-1)\|(\mA^T\mA)(\vx - \vy)\|_{p_{\ast}}\\
    &\leq (p_{\ast}-1)\|\mA^T\mA\|_{1 \to p_{\ast}}\|\vx - \vy\|_1\\
    &\leq (p_{\ast}-1)(1+\epsilon)\|\mA^T\mA\|_{1 \to \infty}\|\vx - \vy\|_1\\
    &= (1+\epsilon)(p_{\ast}-1) \max_{1\leq i, j \leq d}|(\mA^T\mA)_{ij}|\cdot\|\vx - \vy\|_1.
\end{align*}
Hence, applying Theorem~\ref{thm:comp-opt-gap-conv}, we have that the problem from Eq.~\eqref{eq:dantzig-selector} can be approximated to arbitrary additive error $\bar{\epsilon}$ with $$k = O\bigg(\sqrt{\frac{A_{\max}}{\lambda}}\frac{\log(d)}{\bar{\epsilon}}\log\Big( \frac{\log(d)A_{\max}\|\vxb^{\star} - \vx_0\|}{\bar{\epsilon}}\Big)\bigg)$$ iterations of Generalized AGD+ from Section~\ref{sec:comp-min}.} %This bound is conjectured to be unimprovable (see~\cite{nesterov2013first}).\todo[]{Not sure how $\lambda$ shows up in [NN:13]. They also get linear dependence on the optimal value of the problem, which I don't see here. It might be OK if we keep the optimality claim w.r.t. $\epsilon$.}

\markupdelete{Similar to bridge regression, there is an interesting trade-off between %\todo[]{Parhaps say ''a relaxed form of sparsity'' or ''parsimony''? And then clarify at the end of the paragraph that $1$-norm is usually used as a proxy for sparsity.} 
the $\ell_1$ norm of the regressor and goodness of fit revealed by the formulation we consider (Eq.~\eqref{eq:dantzig-selector}). In particular, using that at an optimal solution $\bar{\vx}^{\star}$ the gradient of the objective from Eq.~\eqref{eq:dantzig-selector} is zero and using Proposition~\ref{prop:duality-map-of-p},
\begin{align*}
    ({1-\epsilon})\lambda \|\vxb^{\star}\|_1 \leq {\lambda}\|\vxb^{\star}\|_p &= {\lambda}\Big\|\nabla \Big(\frac{1}{2}\|\vxb^{\star}\|_p^2\Big)\Big\|_{p_{\ast}}\\
    &= \Big\|\nabla \Big(\frac{1}{2}\|\mA^T(\mA\vxb^{\star} - \vb)\|_{p_{\ast}}^2\Big) \Big\|_{p_{\ast}}\\
    &\leq \|\mA^T \mA\|_{p \to p_{\ast}} \|\mA^T(\mA\vxb^{\star} - \vb)\|_{p_{\ast}}\\
    &\leq \frac{1+\epsilon}{1-\epsilon}A_{\max}\|\mA^T(\mA\vxb^{\star} - \vb)\|_{\infty}.
\end{align*}
Hence, $\lambda \|\vxb^{\star}\|_1 \leq (1 + O(\epsilon))A_{\max}\|\mA^T(\mA\vxb^{\star} - \vb)\|_{\infty}.$  As the $\ell_1$ norm of the regressor is considered a proxy for sparsity, this bound provides a trade-off between the parsimony of the model and the goodness of fit, as a function of the regularization parameter $\lambda$. We remark that the Dantzig selector problem has also been investigated in \cite{nesterov2013first}, however their results provide guarantees for a saddle-point formulation of the problem, which is not directly comparable to the optimization guarantee that we provide here. }

%%%%%%%%%%%%%%%%%%
\subsection{\markupadd{Solving Symmetric PSD Linear Systems with Maximum Constraint Violation Guarantee}}

\markupadd{When solving linear systems $\mA \vx = \vb$, we are often interested in the maximum constraint violation as opposed to the $\ell_2$ norm of the error vector $\mA \vx - \vb$, $\|\mA\vx - \vb\|_2$, obtained by minimizing the quadratic function $\|\mA \vx - \vb\|_2^2.$ When $\mA$ is symmetric and positive semidefinite (PSD), a common approach to solving linear systems is by minimizing the quadratic function $f(\vx) = \frac{1}{2}\vx^T \mA \vx - \vb^T \vx.$ The gradient of this quadratic function is precisely the error vector $\mA \vx - \vb$ for the linear system $\mA \vx = \vb,$ thus in this case we are interested in minimizing the gradient of $f$. }

\markupadd{If one uses a Euclidean first-order approach to minimize  the gradient of $f,$ then the resulting gradient oracle complexity to obtain $\|\mA \vx - \vb\|_{2} \leq \epsilon$ is $$\Theta\Big(\min\Big\{d,\, \sqrt{\frac{\|\mA\|_2\|\vx_0 - \vx^*\|_2}{\epsilon}}\Big\}\Big),$$ where $\vx_0$ is an initial point and $\vx^*$ is a solution to the linear system $\mA \vx = \vb$~\cite{nemirovsky1991optimality,nemirovsky1992information}. Note that $\|\mA \vx - \vb\|_\infty$ can be as large as $\|\mA \vx - \vb\|_2$ in the worst case. On the other hand, applying our result from Theorem~\ref{thm:grad-norm} with $p = 1 + \frac{c}{\log(d)}$, $c > 0,$ we get that $\|\mA \vx - \vb\|_{\infty} \leq \epsilon$ with gradient oracle  complexity $\widetilde{O}\big(\sqrt{\frac{\|\mA\|_{p \to p^*}\|\vx_0 - \vx^*\|_p}{c\epsilon}}\big) = \widetilde{O}\big(\sqrt{\frac{\max_{1 \leq i, j \leq d}A_{ij}\|\vx_0 - \vx^*\|_1}{c\epsilon}}\big).$ If the system $\mA \vx - \vb$ has sparse solutions, then selecting $\vx_0 = \zeros$, the obtained bound can be smaller by a factor $\sqrt{d}$ for constant $c$, as $\|\mA\|_2$ can be as large as $d\max_{1 \leq i, j \leq d}A_{ij}$.}\footnote{\markupadd{This bound is tight for the matrix of all ones.}} \markupadd{Further, as a consequence of the results from Section~\ref{sec:small-grad-norm}, our algorithm enforces small $\ell_p$-norm of the output solutions (and thus a small $\ell_1$ norm). In particular, due to Eq.~\eqref{eq:bnd-init-dist}, when solving the regularized problem from Section~\ref{sec:small-grad-norm} to obtain a solution with small gradient norm, it is guaranteed that $\|\vxb\|_p^* \leq \|\vx^*\|_p$, where $\vxb^*$ is the solution to the regularized problem and $\vx^*$ is any solution to the linear system $\mA \vx = \vb$. As a consequence, $\|\vxb\|_p^* \leq \min_{\vx: \mA\vx = \vb}\|\vx\|_p.$ Since the regularized problem is strongly convex, we further have that the output solution $\vy_k$ satisfies $\|\vy_k - \vxb^*\|_p^2 \leq \frac{1}{2\lambda}(f(\vy_k) - f(\vxb^*)).$ If we slightly change the target error of Generalized AGD+ in Theorem~\ref{thm:grad-norm} to guarantee that $f(\vy_k) - f(\vxb^*) \leq \frac{(p-1)\epsilon^3}{2}$ (which only affects the terms under the log factor and does not change the resulting asymptotic complexity), we get that $\|\vy_k - \vxb^*\|_p \leq \epsilon$. As a consequence, using the triangle inequality, we have}
\begin{equation}\notag
    \markupadd{\|\vy_k\|_p \leq \|\vy_k - \vxb^*\|_p + \|\vxb^*\|_p \leq (1 + \epsilon)\|\vxb^*\|_p \leq (1 + \epsilon)\min_{\vx: \mA\vx = \vb}\|\vx\|_p.}
\end{equation}
\markupadd{Finally, using properties of $\ell_p$ norms and our choice of $p,$ we have that }
\begin{equation}\notag
    \markupadd{\|\vy_k\|_1  \leq \Big(1 + O(c + \epsilon)\Big)\min_{\vx: \mA\vx = \vb}\|\vx\|_1.}
\end{equation}
%%%%%%%%%%%%%%%%%%
\subsection{$\ell_p$ Regression}\label{sec:ellp-reg}

Standard $\ell_p$-regression problems have as their goal finding a vector $\vx^{\star}$ that minimizes $\|\mA \vx - \vb\|_p,$ where $p \geq 1.$ When $p =1$ or $p = \infty,$ this problem can be solved using linear programming. More generally, when $p \notin \{1, \infty\},$ the problem is nonlinear, and multiple approaches have been developed for solving it, including, e.g., a homotopy-based solver~\cite{bubeck2018homotopy}, solvers based on iterative refinement~\cite{adil2019iterative,Adil:2020}, and solvers based on the classical method of iteratively reweighted least squares \cite{Ene:2019,Adil:2019}. 
Such solvers typically rely on fast linear system solves and  attain logarithmic dependence on the inverse accuracy $1/\epsilon,$ at the cost of iteration count scaling polynomially with one of the dimensions of  $\mA$ (typically the lower dimension, which is equal to the number of rows $m$), each iteration requiring a constant number of linear system solves.

Here, we consider algorithmic setups in which the iteration count is dimen\-sion-independent and no linear system solves are required, but the dependence on $1/\epsilon$ is polynomial. First, for standard $\ell_p$-regression problems, we can use a non-composite variant of the algorithm (with $\psi(\cdot) = 0$), while relying on the fact that the function $\frac{1}{q}\|\cdot\|_p^q$ with $q = \min\{2, p\}$ is $(1, p)$-weakly smooth for $p \in (1, 2)$ and $(p-1, 2)$-weakly smooth for $p \geq 2.$ Using this fact, it follows that the function
$$
    f_p(\vx) = \frac{1}{q}\|\mA \vx - \vb\|_p^q
$$
is $(L_p, q)$-weakly smooth w.r.t.~$\|\cdot\|_p$, with $L_p = \max\{p-1, 1\} \|\mA\|_{p\to p_{\ast}}^{q-1}.$ On the other hand, function $\phi(\vx) = \frac{1}{\bar{q}\min\{p-1, 1\}}\|\vx - \vx_0\|_p^{\bar{q}}$, where $\bar{q} = \max\{2, p\}$ is $(1, \bar{q})$-uniformly convex w.r.t.~$\|\cdot\|_p.$ Thus, applying Theorem~\ref{thm:comp-opt-gap-conv}, we find that we can construct a point $\vy_k \in \rr^d$ such that $f_p(\vy_k) - f_p(\vx^{\star}) \leq \epsilon,$ where $\vx^{\star} \in \argmin_{\vx \in \rr^d}f_p(\vx),$ with at most
$$
    k = \begin{cases}
        O\Big(\Big(\frac{\|\mA\|_{p\to p_{\ast}}^{p-1}}{\epsilon}\Big)^{\frac{2}{3p - 2 }}\Big(\frac{\|\vx^{\star} - \vx_0\|_p^2}{p-1}\Big)^{\frac{p}{3p - 2}}\Big), &\text{ if } p \in (1, 2)\\
        O\Big(\Big(\frac{(p-1)\|\mA\|_{p\to p_{\ast}}}{\epsilon}\Big)^{\frac{p}{p + 2}}\Big(\frac{\|\vx^{\star} - \vx_0\|_p^p}{p}\Big)^{\frac{2}{p+2}}\Big), &\text{ if } p \geq 2
    \end{cases}
$$
iterations of Generalized AGD+. The same result can be obtained by applying the iteration complexity-optimal algorithms for smooth minimization over $\ell_p$-spaces~\cite{nemirovskii1985optimal,d2018optimal}.

More interesting for our framework is the $\ell_p$ regression on correlated errors, described in the following.
\paragraph{$\ell_p$-regression on correlated errors.} 

As argued in~\cite{candes2007dantzig}, there are multiple reasons why minimizing the correlated errors $\mA^T(\mA\vx - \vb)$ in place of the standard errors $\mA\vx - \vb$ is more meaningful for many applications. First, unlike standard errors, correlated errors are invariant to orthonormal transformations of the data. Indeed, if $\mU$ is a matrix with orthonormal columns, then $(\mU\mA)^T(\mU\mA\vx - \mU\vb) = \mA^T(\mA\vx - \vb)$, but the same cannot be established for the standard error $\mA\vx - \vb$. Other reasons involve ensuring that the model includes explanatory variables that are highly correlated with the data, which is only possible to argue when working with correlated errors (see~\cite{candes2007dantzig} for more information).

Within our framework, minimization of correlated errors in $\ell_p$-norms can be reduced to making the gradient small in the $\ell_p$-norm; i.e., to applying results from Section~\ref{sec:small-grad-norm}. In particular, consider the function:
$$
    f(\vx) = \frac{1}{2}\|\mA\vx - \vb\|_2^2.
$$
The gradient of this function is precisely the vector of correlated errors, i.e., $\nabla f(\vx) = \mA^T(\mA\vx - \vb).$ Further, function $f$ is $L_{p_{\ast}}$-smooth w.r.t.~$\|\cdot\|_{p_{\ast}}$, where $L_{p_{\ast}} = \|\mA^T\mA\|_{p_{\ast}\to p}.$ 

Applying the results from Theorem~\ref{thm:grad-norm}, it follows that, for any $\epsilon >0,$ we can construct a vector $\vy_k \in \rr^d$ with $\|\mA^T(\mA\vy_k - \vb)\|_{p} \leq \epsilon,$ where $\frac{1}{p}+\frac{1}{p_{\ast}} = 1,$ with at most 
$$
    k = \begin{cases}
        \widetilde{O}\bigg(\Big(\frac{\|\mA^T\mA\|_{p_{\ast}\to p}\|\vx^{\star} - \vx_0\|_{p_{\ast}}}{\epsilon}\Big)^{\frac{2}{3p-2}}\bigg), &\text{ if } p \in (1, 2)\\
        \widetilde{O}\bigg(\sqrt{\frac{\|\mA^T\mA\|_{p_{\ast}\to p}\|\vx^{\star} - \vx_0\|_{p_{\ast}}}{\epsilon}}\bigg), &\text{ if } p > 2
    \end{cases}
$$
iterations of generalized AGD+, where $\widetilde{O}$ hides a factor that is logarithmic in $1/\epsilon$ and where each iteration takes time linear in the number of non-zeros of $\mA$. We are not aware of results of this type in the literature.

%%%%%%%%%%%%%%%%%
\subsection{Spectral Variants of Regression Problems}\label{sec:schp-reg}
The algorithms we propose in this work are not limited to $\ell_p$ settings, but apply more generally to uniformly convex spaces. A notable example of such spaces are the {\em Schatten spaces}, $\Schatten_p:=(\rr^{d\times d},\|\cdot\|_{\Schatten,p}),$ where $\|\mX\|_{\Schatten,p}=(\sum_{j\in[d]}\sigma_j(\mX)^p)^{1/p},$ where $\sigma_1(\mX),\ldots,\sigma_d(\mX)$ are the singular values of $\mX$. In particular, the aforementioned $\ell_p$-regression problems have their natural spectral counterparts, e.g., given a linear operator ${\cal A}:\rr^{d\times d}\to\rr^k$, and $\vb\in \rr^{k}$, 
$$ 
\min_{\mX\in\rr^{d\times d}}\frac{1}{s}\|{\cal A}\mX-\vb\|_q^s + \frac{\lambda}{r}\|\mX\|_{\Schatten,p}^r. 
$$
The most popular example of such a formulation comes from the nuclear norm relaxation for {\em low-rank matrix completion} \cite{Recht:2010, Chandrasekaran:2012, nesterov2013first}. We observe that the exact formulation of the problem may vary, but by virtue of Lagrangian relaxation we can interchangeably consider these different formulations as equivalent (modulo appropriate choice of regularization/constraint parameter choice). \\
To apply our algorithms to Schatten norm settings, we observe the functions below are $(1,r)$-uniformly convex, with $r=\max\{2,p\}$:
\begin{equation}\notag%\label{eq:psi_p-Schatten}
    \Psi_{\Schatten,p}(\mX)  = \begin{cases}
                        \frac{1}{2(p-1)}\|\mX\|_{\Schatten,p}^2, &\text{ if } p \in (1, 2],\\
                        \frac{1}{p} \|\mX\|_{\Schatten,p}^p, &\text{ if } p \in(2,+\infty).
                    \end{cases}
\end{equation}
On the other hand, notice that more generally than regression problems, for composite objectives
$$ f(\mX)+\lambda\Psi_{\Schatten,p}(\mX-\mX_0), $$
if the function $f$ is unitarily invariant and convex, there is a well-known formula for its subdifferential, based on the subdifferential of its vector counterpart (there is a one-to-one correspondence between unitarily invariant functions $\rr^{d\times d}$ and absolutely symmetric functions on $\rr^d$) \cite{Lewis:1995}. Even if $f$ is not unitarily invariant, in the case of regression problems the gradients can be computed explicitly. On the other hand, the regularizer $\Psi_{\Schatten,p}$ admits efficiently computable solutions to problems from Eq.~\eqref{eq:psi-simple}, given its unitary invariance (see, e.g., \cite[Section 7.3.2]{Beck:2017}).\\
Iteration complexity bounds obtained with these regularizers are analogous to those obtained in the $\ell_p$ setting. On the other hand, the lower complexity bounds proved in Section \ref{sec:LowerBounds} also apply to Schatten spaces by diagonal embedding from $\ell_p^d$, hence all the optimality/suboptimality results established for $\ell_p$ carry over into $\Schatten_p$.

\subsection{\markupadd{Entropy-Regularized Optimal Transport}}
\markupadd{
Consider the entropic regularization \cite{Fang:1992} of the discrete optimal transport problem and its dual \cite{Cuturi:2013, Lin:2022}. Given a transport cost $\mC\in \mathbb{R}_+^{m\times n}$, marginals $\vmu\in \Delta_m$, $\vnu\in \Delta_n$,\footnote{Without loss of generality, we may assume that both probability distributions have full support, thus $\mu_i,\nu_j>0$ for all $i,j$.} and regularization parameter $r>0$, let
\begin{align*}
    (P^{r}) & \min_{\mX\in \mathbb{R}_+^{m\times n}} \{\langle \mC,\mX\rangle +r \langle \mX,\ln (\mX)-\mathbf{U}\rangle : \mX\mathbf{1}=\vmu,\, \mX^{\top}\mathbf{1}=\vnu,\, \langle \mathbf{U},\mX\rangle=1\}\\
    (D^{r}) & \min_{\vu\in\mathbb{R}^m,\vv\in \mathbb{R}^n} \varphi((\vu,\vv)):= \Big\{ r\ln\Big(\sum_{i,j} \exp\big(\frac{1}{r}[u_i+v_j-c_{ij}]\big)\Big)-\langle \vmu,\vu\rangle-\langle \vnu,\vv\rangle \Big\}.
\end{align*}
where $\mathbf{1}$ denotes the all-ones vector (of the corresponding dimension) and $\mathbf{U}\in\mathbb{R}^{m\times n}$ is the all-ones matrix, $\langle\cdot,\cdot\rangle$ applied to vectors is the standard inner product, which for matrices is the Frobenius inner product, and $\ln(\cdot)$ applied to a matrix denotes the component-wise application of the natural logarithm. Note that $\vu,\vv$ are the dual variables associated with the marginal constraints $\mX\mathbf{1}=\vmu$ and $\mX^{\top}\mathbf{1}=\vnu$, respectively. Further, we emphasize that the dual objective $\varphi$ is $L$-smooth with respect to the $\|\cdot\|_{\infty}$ norm with $L=1/r$ \cite{Beck:2017}.
}

\markupadd{
Observe that by denoting
$\mB(\vu,\vv)=\big(\exp([u_i+v_j-c_{ij}]/r) \big)_{i,j\in[n]}$ and $\mX(\vu,\vv)=\mB(\vu,\vv)/\langle \mathbf{U},\mB(\vu,\vv)\rangle$, then 
\[ \nabla \varphi((\vu,\vv)) =\big(\mX(\vu,\vv)\mathbf{1}-\vmu \,\,,\,\, \mX(\vu,\vv)^{\top}\mathbf{1}-\vnu\big)^{\top}. \]
In particular, a global minimum of $(D^{r})$ induces a feasible solution for $(P^{r})$, and a dual solution with small ($\ell_1$-norm of the) gradient implies a primal solution with small ($\ell_1$-norm) infeasibility. The utility of finding such nearly feasible transports is related to the possibility of constructing (exactly) feasible transports with small additional transport cost. 
}
\markupadd{
\begin{lemma}[From \cite{Altschuler:2017}]
There exists an algorithm that runs in time $O(mn)$ which takes as input  $\mX\in\Delta_{m\times n}$ (infeasible w.r.t.~the marginal constraints), and produces a $\hat \mX\in\Delta_{m\times n}$, which is feasible for the marginal constraints, and such that $\|\mX-\hat \mX\|_1 \leq 2[\|\mX\mathbf{1}-\vmu\|_1+\|\mX^{\top}\mathbf{1}-\vnu\|_1]$.
\end{lemma}
We conclude that, in order to solve an (unregularized) optimal transport problem, it suffices to find a (regularized) dual solution with small norm of the gradient. More specifically, noticing that if $\mX^r$ is an optimal solution for $(P^r)$, then $\langle \mC,\mX^r\rangle-\langle \mC,\mX^0\rangle\leq r\ln(mn)$ \cite{Cominetti:1994, Weed:2018}, and therefore in order to obtain an $\epsilon$ optimal solution for $(P^0)$ (the unregularized problem), it suffices to choose $r=\epsilon/[2\ln(mn)]$ and target accuracy for the norm of the gradient $\|\nabla \varphi((\vu,\vv))\|_1\leq \varepsilon/[4\|\mC\|_{\infty}]=:\delta$, where $\|\mC\|_{\infty}=\max_{i,j}|c_{ij}|$.
} 

\markupadd{
We now use the methods developed in previous sections to compute such a solution. For this purpose, we endow the space $\mathbb{R}^{m+n}$ with the $\ell_{\infty}$ norm, and we use the regularizer $\psi(\vz)=\frac12\|\vz\|_2^2$, which is $1$-strongly convex w.r.t.~$\ell_{\infty}$-norm. %We consider the problem of finding a small norm of the gradient for the penalized objective, using a regularization strategy with the regularized $\Psi$. 
Notice that this setting does not exactly coincide with that of Section \ref{sec:small-grad-norm}, in particular since $\psi$ is not the $\ell_{\infty}$ norm to some power. Nevertheless, the same rationale used in the aforementioned section shows that running AGD+ on the regularized objective $\bar\varphi:=\varphi+\lambda \psi$ with $\lambda$ set such that $\lambda\|\nabla\psi((\vu^{\ast},\vv^{\ast}))\|_1\leq\delta/2$ will provide the desired vector with $\ell_1$ norm of the gradient $\delta$ with complexity 
$O\big( \sqrt{\frac{L}{\lambda}} \log\big(\frac{L\psi((\vu^{\ast},\vv^{\ast}))}{\delta}\big) \big)$.
}

\markupadd{Our last task is then to provide an a-priori bound on the $\ell_{\infty}$-norm of the optimal dual solution. In this respect, multiple results can be found in the literature \cite{Cominetti:1994,Dvurechensky:2018,Chambolle:2022}. The following is particularly useful for our purposes.}

\markupadd{
\begin{proposition}[Adapted from \cite{Lin:2022}]
%Let $(u^{\ast},v^{\ast})$ be an optimal solution for $(D^{r})$. %Then $\max_i u_i^{\ast} - \min_i u_i^{\ast}\leq \|C\|_{\infty}+r\ln\bar\vmu$, and $\max_j v_j-\min_jv_j\leq \|C\|_{\infty}+r\ln\bar\vnu$, where $\|C\|_{\infty}=\max_{i,j}|c_{ij}|$,  $\bar\vmu=\max_{i,k}\mu_i/\mu_k$, and $\bar\vnu=\max_{j,l}\nu_j/\nu_l$. In particular, 
There exists an optimal solution $(\vu^{\ast},\vv^{\ast})$ for $(D^{r})$ such that
$\|(\vu^{\ast},\vv^{\ast})\|_{\infty}\leq  r[\ln\max\{\bar\vmu,\bar\vnu\}+\ln (mn)]+\|\mC\|_{\infty}$, where   $\bar\vmu=\max_{i}1/\mu_i$, and $\bar\vnu=\max_{j}1/\nu_j$.
\end{proposition}
}

\markupadd{
Finally, the resulting oracle complexity to obtain accuracy $\delta$ in the norm of the gradient is given by
\begin{align*}
    &O\Big( \sqrt{\frac{L}{\lambda}}\log\big( \frac{L\psi((\vu^{\ast},\vv^{\ast})}{\delta} \big) \Big)\\ 
    &= \textstyle O\Big( \sqrt{\frac{2\|(\vu^{\ast},\vv^{\ast})\|_1}{r\delta}}\log\big( \frac{\|\mC\|_{\infty}\|(\vu^{\ast},\vv^{\ast})\|_2^2}{r\varepsilon} \big) \Big) \\
    &= \textstyle O\Big( \sqrt{(m+n)\log(mn)}\ln\big(\frac{(m+n)\|\mC\|_{\infty}}{\varepsilon}\big)\Big[ \frac{\|\mC\|_{\infty}}{\varepsilon}+\sqrt{\frac{\|\mC\|_{\infty}\varepsilon\log\max\{\bar\mu,\bar\nu\}}{\log(mn)}} \Big] \Big).
\end{align*}
Noticing that each step of this method requires arithmetic complexity $O(mn)$, we finally obtain a total complexity of $O((m+n)^{5/2}\|C\|_{\infty}/\varepsilon)$, which matches the state of the art of the existing practically scalable methods (see discussions in \cite{Chambolle:2022}). The only existing method that obtains improved complexity is based on area convexity \cite{Jambulapati:2019}, which unfortunately is not competitive unless the dimension is extremely large. To summarize, this example shows how our methods can directly reproduce existing complexity bounds that have been obtained by arguably much more sophisticated and ad-hoc methods.
}
%%%%%%%%%%%%%%%%%%%%%%%%%
\section{Conclusion and Future Work}

We presented a general algorithmic framework for \emph{complementary composite optimization}, where the objective function is the sum of two functions with complementary properties---(weak) smoothness and uniform/strong convexity. The framework has a number of interesting applications, including in making the gradient of a smooth function small in general norms and in different regression problems that frequently arise in machine learning. We also provided lower bounds that certify near-optimality of our algorithmic framework for the majority of standard $\ell_p$ and $\Schatten_p$ setups.

Some challenging questions for future work remain. First, the regularization-based approach that we employed for gradient norm minimization leads to near-optimal oracle complexity bounds only when the objective function is smooth and the norm of the space is strongly convex (i.e., when the $p_{\ast}$-norm of the gradient is sought for $p_{\ast} \geq 2$). The primary reason for this is that these are the only settings in which the complementary composite minimization leads to linear convergence. As the bounds we obtain for complementary composite minimization are near-tight, this represents a fundamental limitation of direct regularization-based approach. It is an open question whether the non-tight bounds for gradient norm minimization can be improved using some type of recursive regularization, as in~\cite{allen2018make}. Of course, there are clear challenges in trying to generalize such an approach to non-Euclidean norms, caused by the fundamental limitation that non-Euclidean norms cannot be simultaneously smooth and strongly convex, as discussed at the beginning of the paper. Another interesting question is whether there exist direct (not regularization-based) algorithms for minimizing general gradient norms and that converge with (near-)optimal oracle complexity. 
%\newpage

\begin{acknowledgements}
The authors would like to thank Carlos Sing-Long and Adrien Taylor for valuable discussions and feedback on a first version of this paper. \markupadd{We would also like to thank Juan Pablo Contreras, Roberto Cominetti and Mario Bravo for insightful discussions on optimal transport and its entropic regularization.}
\end{acknowledgements}

\bibliography{references.bib}
\bibliographystyle{plain}

\appendix

\section{\markupadd{Impossibility of Acceleration in the Relatively Smooth and Relatively Strongly Convex Setting}} \label{app:LB_rel_smooth}

\markupadd{Below we exploit a simple reduction based on regularization to prove a lower bound on relatively smooth and relatively strongly convex optimization. The problem we will reduce to is relatively smooth convex optimization, for which tight lower complexity bounds are known \cite{dragomir2019optimal}.
\begin{proposition}
The complexity of $L$-relatively smooth and $\mu$-relative strongly convex minimization is  bounded below by $\Omega\big(\frac{L}{\mu}\big)$.
\end{proposition}
\vspace{-0.6cm}
}
\markupadd{
\begin{proof} 
Suppose that the oracle complexity of solving relatively smooth and relatively strongly convex functions is $o\big(\frac{L}{\mu}\big)$. Then, given a function $f$ which is $L$ relatively smooth w.r.t.~$h$, consider the optimization problems
\begin{eqnarray*} 
(P_0) && \min_x f(x) ,\\
(P_{\lambda}) &&\min_x f(x)+\lambda h(x).
\end{eqnarray*}
Note that $(P_{\lambda})$ is $\lambda$-relative strongly convex and $(L+\lambda)$-relatively smooth, both w.r.t. $h$. Proceeding as in the proof of Theorem \ref{thm:grad-norm}, we have that $h(x^{\lambda})\leq h(x^0)$, where $x^{0}$ and $x^{\lambda}$ are optimal solutions for $(P_0)$ and $(P_{\lambda})$, respectively. Hence, it suffices to solve $(P_{\lambda})$ to accuracy $\varepsilon/2$ and $\lambda=\varepsilon/[2h(x^0)]$, to obtain a solution with accuracy $\varepsilon$ for problem $(P_0)$.\\
Now, using an optimal algorithm for problem $(P_{\varepsilon/[2h(x^0)]})$, we have by our assumption that its oracle complexity is at most
\[ o\Big( \frac{L+\lambda}{\lambda} \Big)=o\Big( \frac{Lh(x^0)}{\varepsilon} \Big), \]
a contradiction with the lower bound from \cite{dragomir2019optimal}. \qed
\end{proof}
}
\end{document}